\tikzset{snake it/.style={decorate, decoration=snake}}
\newtheorem{proposition}{Proposition}[section]
\newtheorem{theorem}[proposition]{Theorem}
\newtheorem{lemma}[proposition]{Lemma}
\newtheorem{definition}[proposition]{Definition}
\newtheorem{remark}[proposition]{Remark}
\newtheorem{example}[proposition]{Example}
\numberwithin{equation}{section}
\numberwithin{figure}{section}
\numberwithin{table}{section}
\crefname{proposition}{proposition}{propositions}
\Crefname{proposition}{Proposition}{Propositions}
\crefname{theorem}{theorem}{theorems}
\Crefname{theorem}{Theorem}{Theorems}
\crefname{lemma}{lemma}{lemmas}
\Crefname{lemma}{Lemma}{Lemmas}
\crefname{corollary}{corollary}{corollaries}
\Crefname{corollary}{Corollary}{Corollaries}
\crefname{definition}{definition}{definitions}
\Crefname{definition}{Definition}{Definitions}
\crefname{remark}{remark}{remarks}
\Crefname{remark}{Remark}{Remarks}
\crefname{example}{example}{examples}
\Crefname{example}{Example}{Examples}
\renewcommand{\phi}{\varphi}
\renewcommand{\theta}{\vartheta}
\renewcommand{\epsilon}{\varepsilon}
\renewcommand{\d}[1]{\mathinner{\mathrm{d}{#1}}}
\newcommand{\dd}[1]{\mathinner{\mathrm{d}{#1}}}
\renewcommand{\L}[1]{\mathbf{L^#1}}
\newcommand{\Lloc}[1]{\mathbf{L^{#1}_{loc}}}
\newcommand{\W}[2]{\mathbf{W^{#1,#2}}}
\newcommand{\C}[1]{\mathbf{C^{#1}}}
\newcommand{\Cc}[1]{\mathbf{C_c^{#1}}}
\newcommand{\BV}{\mathbf{BV}}
\newcommand{\BVloc}{\mathbf{BV_{loc}}}
\newcommand{\modulo}[1]{{\left|#1\right|}}
\newcommand{\norma}[1]{{\left\|#1\right\|}}
\newcommand{\reali}{{\mathbb{R}}}
\newcommand{\naturali}{{\mathbb{N}}}
\newcommand{\tv}{\mathop\mathrm{TV}}
\newcommand{\essinf}{\mathop\mathrm{ess\,inf}}
\newcommand{\esssup}{\mathop\mathrm{ess\,sup}}
\DeclareMathOperator{\Lip}{Lip}
\begin{document}

\title{Control of Conservation Laws in the Nonlocal-to-Local Limit }
\author{Jan Friedrich\footnote{Chair of Optimal Control, Department for Mathematics, School of Computation, Information and Technology, Technical University of Munich, Boltzmannstraße 3, 85748 Garching b. Munich, Germany, \texttt{jan.friedrich@cit.tum.de}} \and Michael Herty\footnote{Chair of Numerical Analysis, Institute for Geometry and Applied Mathematics, RWTH Aachen University, Templergraben 55, 52056 Aachen, Germany, \texttt{herty@igpm.rwth-aachen.de}} \and Claudia Nocita \footnote{Corresponding author} \footnote{Department of Mathematics and Applications,
  University di Milano - Bicocca, via Roberto Cozzi 55, 20125 Milano, Italy, \texttt{c.nocita@campus.unimib.it}}}
\maketitle

\begin{abstract}
 We analyze a class of control problems where the initial datum acts as a control and the state is given by the entropy solution of (local) conservation laws by a \emph{nonlocal-to-local} limiting strategy. In particular we characterize the limit up to subsequence of minimizers to \emph{nonlocal} control problems as minimizer of the corresponding \emph{local} ones. 
Moreover, we also prove an analogous result at a discrete level by means of a Eulerian-Lagrangian scheme.

\medskip

  \noindent\textbf{Keywords:} Nonlocal Conservation Laws;
  Nonlocal-to-local Limit; Traffic Models; Optimal Control of Conservation Laws; Eulerian-Lagrangian scheme.
  \medskip

  \noindent\textbf{MSC~2020:} 35L65, 49J20, 65M08.
  
\end{abstract}

\section{Introduction}
The main aim of the present work 
is to take advantage of the nonlocality in order to solve control problems applied to (local) conservation laws, where the initial datum of the PDE acts as a control. 
The starting point is the study of the following class of Cauchy problems for nonlocal conservation laws in one space variable $x \in \reali$
\begin{equation}\label{eq:23}
\begin{cases}
    \partial_t u_H + \partial_x \left(u_H v\left(u_H*\eta_H \right) \right) =0, \\
    u_H(0,\cdot) = u_o 
\end{cases}
\end{equation}
where the convolution kernel is $\eta_H(x)= \frac{1}{H}\eta\left(\frac{x}{H}\right)$, with $H >0$ and $\eta$ a fixed $\L1(\reali; \reali)$ nonnegative function with unit integral, which can be applied in the context of traffic flow modeling \cite{blandin2016,chiarello2018}.
In the limit $H \to 0^+$, the kernel functions $\eta_H$ converge weakly-* in the sense of measures to the Dirac delta and the problem~\eqref{eq:23} formally tends to the Cauchy problem for the (local) conservation law
\begin{equation}\label{eq:24}
\begin{cases}
    \partial_t u + \partial_x \left(u v(u) \right) =0, \\
    u(0,\cdot) = u_o .
\end{cases}
\end{equation}
The question of whether this limit rigorously holds was among the first raised in \cite{amorim2015} based on numerical experiments. However, these numerical findings have been explained by the built-in numerical viscosity of the underlying schemes \cite{colombo2021role}.
Nevertheless, since then, several papers rigorously investigate the hypothesis under which the limit \emph{nonlocal-to-local} is valid, that is, when the solution to~\eqref{eq:23} converges to the admissible entropy solution of~\eqref{eq:24} in the limit $H \to 0^+ $. Although the limit does not hold in general (in \cite{nonloc_loc_infty} the authors exhibit counterexamples in which the solution to the nonlocal problem~\eqref{eq:23} experiences a total variation blow-up in final time, ruling out the desired convergence), under certain hypotheses on the convolution kernel, the speed function and the initial datum, the limit is proved. In particular, under the assumptions of monotone decreasing $v$ and $\eta$ supported on the negative real semi-axis, which are common hypotheses in traffic flow models, positive results have been achieved (see e.g. \cite{zbMATH07213667, bressan2021, zbMATH08005762, nonloc_loc_exp, nonloc_loc_infty,Nonloc-to-loc-convex,friedrich2022singular, keimer2019approximation, keimer2023singulardisc, KEIMER2025129307}).
We also refer to the overviews \cite{colombo2023overview,keimer2023nonlocal} and the references therein.

The \textit{nonlocal-to-local} limit would be a strong tool to approach the study of local conservation laws as limit of nonlocal ones. 
In particular, we address here the optimization problem 
\begin{equation}\label{eq:42}
    \min_{u_0 \in \mathcal{U}_{ad}} \mathcal G(u_o) = \min_{u_0 \in \mathcal{U}_{ad}} 
    \left(
    \mathcal J (u(T)) + \mathcal{K}(u) + \mathcal{I}(u_o)
    \right),
\end{equation}
where the initial datum $u_0$ serves as a control variable within the admissible set $\mathcal{U}_{ad}$, and $ \mathcal G$ is the combination of three possible different objective functionals rather general depending on the entropy solution $u$ to~\eqref{eq:24}, on its profile at the final time $T>0$ and on the initial datum itself. 
The question was first raised in \cite{nonloc_loc_exp}: the limiting strategy may overcome the difficulties related to the non-differentiability of the semigroup of solutions to local conservation laws, which is strictly linked to the spontaneous formation of shocks in solutions to (local) conservation laws and the shift in their position. Note moreover that the general non-differentiability can be also shown in the class of $\C\infty$ initial data. The following example illustrates such property.
\begin{example}\label{example:1}
Fix $v(u)=1-u$ and let $u$ be the entropy solution to~\eqref{eq:24} associated to the initial datum $u_o \coloneqq \frac{1}{2}\chi_{[0,+\infty[}$. The entropy solution features a shock moving with speed $\frac{1}{2}$:
$$ u(t,x) = \begin{cases}
    0 & x<\frac{t}{2}, \\
    \frac{1}{2} & x>\frac{t}{2}.
\end{cases}$$
For fixed $\varepsilon >0$ we now perturb the initial datum $u_o$ in the direction $\delta u_o = \varepsilon \chi_{[0,+\infty[}$ and consider the initial datum $u_{o,\varepsilon} \coloneqq \left(\frac{1}{2} + \varepsilon\right)\chi_{[0,+\infty[} = u_o + \varepsilon \delta u_o$. The corresponding entropy solution $u_\varepsilon$ to~\eqref{eq:24} can be straightforwardly computed:
$$ u_\varepsilon(t,x) = \begin{cases}
    0 & x<\left(\frac{1}{2}-\varepsilon\right)t, \\
    \frac{1}{2}+\varepsilon & x>\left(\frac{1}{2}-\varepsilon\right)t.
\end{cases}$$
Hence, the difference quotient
$$\frac{u_\varepsilon - u } {\varepsilon} = \begin{cases}
    0 & x<\left(\frac{1}{2}-\varepsilon\right)t, \\
    \frac{1}{2\varepsilon} +1 & \left(\frac{1}{2}-\varepsilon\right)t <x < \frac{1}{2}t, \\
    1 & x> \frac{1}{2}t
\end{cases}$$
in the limit $\varepsilon \to 0^+$ converges weakly-* for all $t>0$ in the space of measures to $\mu_t =H(\cdot - \frac{t}{2}) + \frac{1}{2}\delta_{x=\frac{t}{2}}$, rather to a $\Lloc1$ function, showing the general non-directional differentiability in $\Lloc1(\reali; \reali)$ of the semigroup of entropy solutions to (local) conservation laws. 
\end{example}
The previous example motivates the study of the differentiability of the map $u_o \to u$ in the space of measures: in \cite{zbMATH01302241} the authors characterize the derivative as \emph{duality} solution to a PDE obtained linearizing~\eqref{eq:24}; in \cite{zbMATH00807784} the concept of \emph{generalized tangent vectors} is introduced to develop a variational calculus for piecewise Lipschitz continuous solutions to (local) conservation laws.
The technique is then used in \cite{zbMATH00889831, zbMATH05194890} to derive optimality conditions for distributed and boundary control problems for strictly hyperbolic conservation laws. However, as the calculus for \emph{generalized tangent vectors} is valid only under the \emph{a priori} assumption on the piecewise Lipschitz regularity of solutions to~\eqref{eq:24}, after constructing the optimal control it is necessary to verify that such control prevents the entropy solution to~\eqref{eq:24} to develop a gradient catastrophe. 
Finally, \cite{zbMATH01848475} deals with control of entropy solution to conservation laws with controlled initial data and source term by means of \emph{shift-variations} and \emph{shift-differentiability}. There, assuming piecewise $\C1$ initial data, the author proves the differentiability and an adjoint calculus for a large class of objective functionals without \emph{a priori} assumptions on the shock structure of solutions to~\eqref{eq:24}. 

On the other hand, the state of the art concerning control of nonlocal conservation laws is very much different. 
In particular, in \cite{zbMATH05908226} the authors are able to prove that under regularity assumptions on the initial data, the semigroup of solutions to~\eqref{eq:23} is strongly $\L1$ Gateaux differentiable in any sufficiently regular direction. Moreover, the derivative is proven to solve a linear nonlocal Cauchy problem, obtained by linearization of~\eqref{eq:23} and necessary conditions for optimality are derived for a large class of objective functionals. 
In \cite{zbMATH06371678} the authors study a control problem with controlled boundary and initial datum for a general $\L2$ tracking objective functional and analyze the corresponding adjoint system. 
In \cite{zbMATH05986509} the authors study the state controllability and nodal profile controllability for a conservation law with nonlocal velocity. 
In addition, the feedback stabilization of the latter model is studied in \cite{coron2013,chen2017global}.
Nevertheless, in most of the previous works the kernels are not anisotropic and the convolutions depend on the whole (bounded) domain.
For traffic flow models as in \eqref{eq:23} the boundary controllability has been investigated in \cite{bayen2021boundary}.

The comparison between control problems in the two settings -- related to local and nonlocal conservation laws -- motivates us to follow this strategy: taking advantage of the nonlocality to study in the \emph{nonlocal-to-local} limit the control problem related to~\eqref{eq:24}. 

The main aim of this paper is to establish, by a $\Gamma$-convergence argument proved in \Cref{teo:4}, the convergence in $\Lloc1(\reali; \reali)$ up to a subsequence of minimizers to \emph{nonlocal control problems} to minimizers to \emph{local} ones. This first main result is presented in \Cref{teo:6}.
Namely, for all $H > 0 $, we will investigate 
\begin{equation}\label{eq:43}
   \min_{u_o \in \mathcal{U}_{ad}} \mathcal{G}_H (u_o) \coloneqq
\min_{u_o \in \mathcal{U}_{ad}} \mathcal{J}(u_H(T)) + \mathcal{K}(u_H) +\mathcal{I}(u_o) 
\end{equation}
where $u_H$ is the solution to~\eqref{eq:23}, proving existence of minimizers in the appropriate admissible set $\mathcal{U}_{ad}$ and showing their convergence to minimizers to~\eqref{eq:42}. 
The analysis is developed in two analytical frameworks, which recall the ones prescribed by \cite{nonloc_loc_infty} and \cite{nonloc_loc_exp} and heavily rely on an improved result for \emph{nonlocal-to-local} limit in the case of the simultaneous convergence as $H \to 0^+$ of the kernel functions $\eta_H$ to the Dirac delta and the initial data to a fixed function in  $\Lloc1(\reali; \reali)$ in \Cref{teo:7}. 

The second aim of this work is to establish in \Cref{teo:8} the $\Gamma$-convergence in a discrete framework.
Instead of finite volume schemes, see e.g. \cite{blandin2016,chiarello2018,friedrich2023numerical,zbMATH07852679}, we adopt the Eulerian-Lagrangian scheme proposed in \cite{abreu_nonloc} with fixed spatial mesh $\Delta x$ to find an approximate solution to~\eqref{eq:23} and~\eqref{eq:24}. Indeed, in \cite{abreu_nonloc} the authors present numerical experiments which show that the scheme preserves the \emph{nonlocal-to-local} limit.
Similarly as in the continuum case, we prove for fixed $\Delta x$, the $\Gamma$-convergence of the discrete version of the functionals $\mathcal{G}_H$ and $\mathcal{G}$ (which will be denoted by the subscript $\Delta$) in the limit $H \to 0^+$ and show that, up to subsequence, minimizers to $\mathcal{G}_{\Delta, H}$ in an adequate discrete admissible set $\mathcal{U}_{\Delta, ad}$ converge in $\L1(\reali; \reali)$ to minimizers to $\mathcal{G}_{\Delta}$. This result is presented in \Cref{teo:9}. 
The $\Gamma$-convergence results here shown are summarized in the diagram \ref{scheme:diagonal}:
the solid lines are proven in \Cref{teo:4} and \Cref{teo:8}, the dotted lines are consequence of convergence of the approximated solution found by the numerical scheme to the solutions to ~\eqref{eq:23} and~\eqref{eq:24} and the dashed line is verified here numerically. We emphasize that a rigorous result concerning the validity of such limit, although beyond the aim of this paper, may be proven adopting a strategy similar to the one in \cite{zbMATH07852679}.
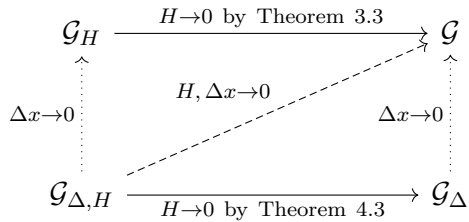
\begin{figure}
\centering
\begin{tikzcd}[column sep=5em, row sep=2em]
\mathcal{G}_H  
    \arrow[rr, "H \to 0~\mathrm{ by~\Cref{teo:4}}"]                           
  &  
  & \mathcal{G} 
\\ 
&  
&                                                            
\\
\mathcal{G}_{\Delta,H} 
    \arrow[uu, "\Delta x \to 0", dotted] 
    \arrow[rr, "H \to 0~\mathrm{ by~\Cref{teo:8}}"'] 
    \arrow[rruu, "{H,\, \Delta x \to 0}", dashed] 
  &  
  & \mathcal{G}_{\Delta} 
    \arrow[uu, "\Delta x \to 0", dotted]
\end{tikzcd}
\caption{ 
The $\Gamma$-convergence of functionals here proved: the solid lines represents the convergence in the \emph{nonlocal-to-local} limit in the continuum and discrete framework (\Cref{teo:4} and \Cref{teo:8}); the dotted lines are consequences of the Eulerian-Lagrangian scheme in the approximation of solution to~\eqref{eq:23} and~\eqref{eq:24}; at last, the diagonal dashed line is verified here numerically for some sequences $(H, \Delta x) \to 0$ (see \Cref{sec:diagonal convergence}).
}\label{scheme:diagonal}
\end{figure}

Some numerical simulations will be provided to illustrate the results claimed above. 

\section{General theory of nonlocal and local conservation laws}
In the present section we recall some important results concerning the theory of nonlocal and local conservation laws which will be useful in the sequel. 
We require some common assumptions for the speed function $v$ and the kernel function $\eta$:
\begin{enumerate}[label=$\bm{(v)}$]
\item \label{hyp:v} 
$v \in \C2(\reali)$ and $v''\leq 0$. Moreover, there exist $\delta, u_{max} > 0 $ such that $v(u_{max})= 0$ and $v'(r)\leq -\delta$ for all $r \in [0,u_{max}]$; 
\end{enumerate}

\begin{enumerate}[label=$\bm{(\eta)}$]
\item \label{hyp:eta}
$\eta(x) \geq 0$ for all $x \in \reali$, $\eta(x) = 0$ for every $x \in ]0,\infty]$ and $\int_\reali \eta(x) \dd{x} =1$. Furthermore $\eta$ is Lipschitz continuous on $]-\infty,0]$ and there exists a constant $D >0 $ such that $\eta(x) \leq D \eta'(x)$ for a.e. $x \in ]-\infty,0[$.
\end{enumerate}
The initial datum is chosen in the space
\begin{equation*}
    \mathcal{U} \coloneqq \{ u_o \in \L\infty(\reali;\reali) : u_o(x) \in [0, u_{max}] \, \text{for a.e.}\, x \in \reali \,\text{and} \,\tv(u_o ) < \infty \},
\end{equation*}
so that we are able to provide the definition of a \emph{solution} to~\eqref{eq:23}. 

\begin{definition}
Given $u_o \in \mathcal{U}$ and $H>0$, a function $u_H \in \C0([0,T]; \Lloc1(\reali; \reali))$ is a solution to the Cauchy problem~\eqref{eq:23} if, fixed $V(t,x) \coloneqq v( (u_H(t)*\eta_H)(x))$ with $\eta_H(x)=\frac{1}{H}\eta\left(\frac{x}{H}\right)$, $u_H$ is a weak solution to 
\begin{equation*}
    \begin{cases}
        \partial_t u_H(t,x) + \partial_x(u_H(t,x) V(t,x)) =0, \\
        u_H(0)=u_o.
    \end{cases}
\end{equation*}
\end{definition}
It can be shown that, under the previous definition, $u_H$ is also an \emph{entropy solution} (see, for example, \cite[Lemma 5.1]{zbMATH05908226}). 

For $H>0$ fixed, several papers deal with the well-posedness of the problem \eqref{eq:23}.
\begin{theorem}{\cite[Theorem 2.1]{zbMATH07213667}, \cite[Theorem 2.1]{nonloc_loc_exp}}
\label{teo:1}
Under assumptions \ref{hyp:v}-\ref{hyp:eta}, for any $u_o \in \mathcal{U}$ and $H>0$ there exists a unique solution to~\eqref{eq:23} $u_H \in \C0([0,T]; \Lloc1(\reali; \reali)) \cap \, \L\infty((0,T); \tv(\reali; \reali))$  and the following \emph{maximum principle} holds:
\[
\essinf_{x \in \reali} u_o (x) \leq u_H(t,x) \leq \esssup_{x \in \reali} u_o(x) \quad \text{ a.e. $t \in [0,T], \,x \in \reali$}.
\]
Moreover, the solutions to~\eqref{eq:23} are continuously dependent on the initial datum w.r.t. $\Lloc1$ norm, i.e. if $ \{u_o^n\}_n \subset \mathcal{U}$ such that $u_o^n \to u_o$ in $\Lloc1(\reali; \reali)$, then for all $t \in [0,T]$ $ u_H^n(t) \to u_H(t)$ in $\Lloc1(\reali; \reali)$, where $u_H^n$ is the solution to~\eqref{eq:23} with initial datum $u_o^n$.
\end{theorem}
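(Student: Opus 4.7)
The plan is to prove existence by a Banach fixed-point iteration on a suitable ball of $\C0([0,T_0];\Lloc1(\reali;\reali))$ carrying uniform $\L\infty$ and $\BV$ bounds, then propagate those a priori bounds to cover $[0,T]$, and finally deduce uniqueness and $\Lloc1$ continuous dependence from an $\L1$ stability estimate of Gr\"onwall type.

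For the fixed-point argument, with $H>0$ fixed I would consider the closed ball
\begin{equation*}
\mathcal{X}_{T_0}\coloneqq\{w\in\C0([0,T_0];\Lloc1(\reali;\reali)):\;0\leq w\leq u_{max},\;\tv(w(t))\leq M\;\forall\,t\in[0,T_0]\},
\end{equation*}
with $M$ to be chosen later. For $w\in\mathcal{X}_{T_0}$ the velocity field $V_w(t,x)\coloneqq v((w(t)*\eta_H)(x))$ is Lipschitz in $x$ with constants finite for fixed $H$ by hypothesis $(\eta)$, so the linear conservation law $\partial_t u+\partial_x(uV_w)=0$ with $u(0,\cdot)=u_o$ admits a unique weak solution obtained by transport of $u_o$ along the (smooth) characteristic flow of $V_w$ with the standard Jacobian weight. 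Setting $\mathcal{T}(w)\coloneqq u$, the elementary bound $|V_{w_1}-V_{w_2}|\leq\|v'\|_{\L\infty}\|\eta_H\|_{\L\infty}\|w_1-w_2\|_{\Lloc1}$ shows $\mathcal{T}$ is a contraction on $\mathcal{X}_{T_0}$ in the $\Lloc1$ topology for $T_0=T_0(H,u_{max},M)$ small.

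The crux is showing that the iteration preserves the bounds $0\leq u\leq u_{max}$ and $\tv(u(t))\leq M$. Writing the equation in non-conservative form, $\partial_t u+V_w\partial_x u=-u\partial_x V_w$, and integrating along characteristics $\dot X(t)=V_w(t,X(t))$ yields
\begin{equation*}
u(t,X(t))=u_o(X_0)\exp\!\left(-\int_0^t\partial_x V_w(s,X(s))\dd{s}\right),
\end{equation*}
giving $u\geq 0$ immediately. The upper bound $u\leq u_{max}$ uses $v(u_{max})=0$, the anisotropic support of $\eta$, and the monotonicity $\eta'\geq 0$ implied by $\eta\leq D\eta'$: at a putative first crossing of $u_{max}$ the sign of $\partial_x(w*\eta_H)$ is incompatible with growth. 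For the $\BV$ estimate, differentiating the PDE in $x$, multiplying by $\sgn(\partial_x u)$, and integrating leads to an inequality of the form
\begin{equation*}
\frac{\dd{}}{\dd{t}}\tv(u(t))\leq C_1\tv(u(t))+C_2\int_{\reali}\bigl(|\partial_x u(t,\cdot)|*\eta_H\bigr)(x)\dd{x},
\end{equation*}
where the nonlocal term on the right is absorbed into $\tv(u(t))$ exactly thanks to $\eta\leq D\eta'$, yielding $\tv(u(t))\leq e^{Ct}\tv(u_o)$ by Gr\"onwall. Setting $M\coloneqq e^{CT}\tv(u_o)$ closes the iteration on $[0,T_0]$; since the resulting constants are uniform in time, one can concatenate to cover all of $[0,T]$.

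For uniqueness and continuous dependence, I would take two solutions $u_H,\tilde u_H$ with data $u_o,\tilde u_o$, subtract the equations, and exploit the uniform $\L\infty$ and $\BV$ bounds together with $|v(u_H*\eta_H)-v(\tilde u_H*\eta_H)|\leq \|v'\|_{\L\infty}\|\eta_H\|_{\L\infty}\|u_H-\tilde u_H\|_{\Lloc1}$ to obtain, for any compact $K\subset\reali$ with $K'$ its finite-propagation enlargement,
\begin{equation*}
\|u_H(t)-\tilde u_H(t)\|_{\L1(K)}\leq C_H\|u_o-\tilde u_o\|_{\L1(K')}+C_H\int_0^t\|u_H(s)-\tilde u_H(s)\|_{\L1(K')}\dd{s},
\end{equation*}
whence both claims follow by Gr\"onwall. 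The main obstacle is clearly the $\BV$ estimate: without the structural condition $\eta\leq D\eta'$, the nonlocal term in the differential inequality above cannot be absorbed and the total variation may blow up in finite time, as the counterexamples of \cite{nonloc_loc_infty} demonstrate; ensuring that this bound propagates uniformly through the Banach iteration, rather than only at the formal continuum level, is the heart of the argument.
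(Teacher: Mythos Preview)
The paper does not supply a proof of this theorem at all: it is stated with explicit citations to \cite[Theorem~2.1]{zbMATH07213667} and \cite[Theorem~2.1]{nonloc_loc_exp} and is used as a black box in the sequel (for instance in the proofs of \Cref{teo:7} and \Cref{prop:1}). So there is no ``paper's own proof'' to compare your attempt against; the authors simply quote the result from the literature.

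That said, your sketch is broadly the standard route taken in those references, with one wrinkle worth flagging. In your fixed-point map $\mathcal{T}(w)=u$, the velocity $V_w=v(w*\eta_H)$ depends on the \emph{previous} iterate $w$, not on $u$. Your argument for the upper bound $u\leq u_{max}$ invokes $v(u_{max})=0$ ``at a putative first crossing of $u_{max}$'', but at such a point one has $u=u_{max}$ while the transport speed is $v(w*\eta_H)$, and there is no reason for $w*\eta_H$ to equal $u_{max}$ there. In other words, the sharp invariant region $[0,u_{max}]$ is generally \emph{not} preserved by the linear step $w\mapsto u$; it is a property of the fixed point, where $w=u$. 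The usual fix is either to run the contraction in a cruder ball (say $\norma{u}_{\L\infty}\leq 2u_{max}$, which is preserved for $T_0$ small since the exponential factor in your characteristic formula is bounded by $e^{CT_0}$) and then prove the maximum principle \emph{a posteriori} for the actual solution using the self-consistent structure, or to set up the iteration differently (e.g.\ via characteristics with $w=u$ from the start, as in Keimer--Pflug type arguments). Apart from this, your outline of the $\BV$ bound via differentiating in $x$ and absorbing the nonlocal term through $\eta\leq D\eta'$, and of continuous dependence via an $\L1$--Gr\"onwall estimate, matches the strategy of the cited works.
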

Now, we report some classical results concerning the well-posedness of the Cauchy problem for the (local) conservation law~\eqref{eq:24}.
\begin{theorem}{\cite[Theorems 1,2,3,5]{MR267257}}\label{teo:5}
    Under assumption \ref{hyp:v}, for any $u_o \in \L\infty(\reali; \reali)$ there exists a (unique) Kru\v zkov solution to~\eqref{eq:24}. Furthermore, if $0 \leq u_o(x) \leq u_{max}$ for a.e. $x \in \reali$, then it holds that 
\[
0 \leq u(t,x) \leq u_{max} \quad \text{a.e. }\, x \in \reali, t \in [0,T]. 
\]
Additionally, if $u,\widehat u\in \L\infty ([0,T]\times\reali; \reali)$
  are {Kru\v zkov} solutions to~\eqref{eq:24} with initial data
  $u_o,\widehat{u}_o \in \L\infty (\reali; \reali)$, then, for a.e.
  $t \in [0, T]$,
  \begin{equation*}
    \begin{array}{c}
      \norma{u (t) - \widehat u (t)}_{\L1 ([a+\sigma t, b-\sigma \, t];\reali)}
      \leq
      \norma{u_o-\widehat{u}_o}_{\L1 ([a,b];\reali)}
    \end{array}
    \end{equation*}
where $\sigma= \norma{v}_{\L\infty([0,u_{max}]; \reali)} + u_{max}\norma{v'}_{\L\infty([0,u_{max}]; \reali)}$, $a<b$.
\end{theorem}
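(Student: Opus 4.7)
The plan is to follow the classical Kru\v zkov strategy in three stages: existence via vanishing viscosity, uniqueness and $\L1$-contraction via doubling of variables, and the maximum principle as a direct consequence of the entropy inequality. Assumption \ref{hyp:v} makes the flux $f(u)=u\,v(u)$ genuinely smooth on $[0,u_{max}]$ with $f(0)=0$ and $f(u_{max})=u_{max}v(u_{max})=0$, which is what drives the comparison with the constant states $0$ and $u_{max}$.

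For \textbf{existence}, I would regularize the initial datum by a mollification $u_o^\epsilon\in\C\infty_b$ and solve the parabolic problem $\partial_t u^\epsilon+\partial_x f(u^\epsilon)=\epsilon\partial_{xx}u^\epsilon$ with $u^\epsilon(0,\cdot)=u_o^\epsilon$. Standard parabolic theory gives a smooth solution, and the parabolic maximum principle yields $0\leq u^\epsilon\leq u_{max}$ uniformly in $\epsilon$ (here I use $f(0)=f(u_{max})=0$). Multiplying the equation by $\sgn(\partial_x u^\epsilon)$ and integrating by parts provides a uniform $\BV$-in-$x$ bound, and by the equation itself a uniform $\Lip$-in-$t$ bound in $\L1$. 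Helly's compactness theorem then extracts a subsequence converging in $\Lloc1$ to some $u\in\L\infty\cap\BVloc$, and passing to the limit in the viscous Kru\v zkov entropy inequality
\[
\partial_t|u^\epsilon-k|+\partial_x\bigl[\sgn(u^\epsilon-k)(f(u^\epsilon)-f(k))\bigr]\leq \epsilon\,\partial_{xx}|u^\epsilon-k|
\]
produces an admissible Kru\v zkov entropy solution for every $k\in\reali$.

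For \textbf{uniqueness and finite-speed $\L1$-contraction}, I would apply the doubling-of-variables argument. Given two entropy solutions $u(t,x)$ and $\widehat u(s,y)$, one writes the Kru\v zkov inequality for $u$ with constant $\widehat u(s,y)$ and for $\widehat u$ with constant $u(t,x)$, adds them, and tests against $\phi(t,x,s,y)=\psi\!\left(\tfrac{t+s}{2},\tfrac{x+y}{2}\right)\rho_\nu\!\left(\tfrac{t-s}{2}\right)\rho_\nu\!\left(\tfrac{x-y}{2}\right)$ with $\rho_\nu$ a standard mollifier. Letting $\nu\to 0^+$, the diagonal terms collapse and one obtains
\[
\partial_t|u-\widehat u|+\partial_x\bigl[\sgn(u-\widehat u)(f(u)-f(\widehat u))\bigr]\leq 0
\]
in the sense of distributions. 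Testing against the truncated cone $\psi(t,x)=\chi_{\{a+\sigma t<x<b-\sigma t\}}$, with $\sigma\geq \|f'\|_{\L\infty([0,u_{max}])}=\|v+uv'\|_{\L\infty([0,u_{max}])}\leq \|v\|_{\L\infty}+u_{max}\|v'\|_{\L\infty}$, gives the stated contraction estimate. Uniqueness follows immediately by taking $u_o=\widehat u_o$.

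The \textbf{maximum principle} then drops out by choosing $k=0$ and $k=u_{max}$ in the entropy inequality for $u$: since $f(0)=f(u_{max})=0$, the constants $0$ and $u_{max}$ are themselves entropy solutions, and the $\L1$-contraction (or, equivalently, an explicit application of Kru\v zkov's inequality against $\chi_{\{0\leq u\}}$ and $\chi_{\{u\leq u_{max}\}}$) forces $0\leq u(t,x)\leq u_{max}$. The main technical obstacle is the doubling-of-variables step — in particular, the careful management of the mollifier limit $\nu\to 0$ together with the cone cut-off so that the boundary terms have the correct sign — while the BV and $\L\infty$ compactness needed for existence are routine, as is the maximum principle once the contraction is in hand.
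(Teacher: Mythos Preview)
The paper does not supply its own proof of this theorem; it simply cites Kru\v zkov's original work \cite[Theorems 1,2,3,5]{MR267257}, so there is no ``paper's proof'' to compare against beyond the classical one. Your outline is precisely the standard Kru\v zkov programme (vanishing viscosity for existence, doubling of variables for the $\L1$-contraction in a cone, and comparison with the constant entropy solutions $0$ and $u_{max}$ for the maximum principle), and it is correct in spirit.

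One technical point worth flagging: your existence argument mollifies $u_o$ to $u_o^\epsilon\in\C\infty_b$ and then appeals to a uniform $\BV$ bound via $\tv(u^\epsilon(t))\leq\tv(u_o^\epsilon)$. For $u_o$ merely in $\L\infty$ this bound is not uniform in $\epsilon$, so Helly's theorem does not apply directly. The usual remedy is either to first establish the $\L1$-contraction for entropy solutions (which needs no existence), then approximate $u_o$ by $\BV$ data $u_o^n$, obtain entropy solutions $u^n$ by your vanishing-viscosity/Helly argument, and use the contraction to show $\{u^n\}$ is Cauchy in $\Lloc1$; or to invoke compensated compactness. Since the statement only requires $u_o\in\L\infty$, you should make this two-step structure explicit.
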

As mentioned in the introduction, several works address the question of the \emph{nonlocal-to-local} limit, i.e. whether the solutions to~\eqref{eq:23} converge to the entropy solution to~\eqref{eq:24} in the limit $H \to 0^+$. 
Here, we concentrate on two of those results.
In \cite{nonloc_loc_infty} the authors provide a positive result under general assumption on the kernel function $\eta$. 
We recall for completeness their main result adopting the following notation for $f:\reali \to \reali$
\begin{equation*}
    \Lip^-f \coloneqq -\inf_{x<y}\frac{f(y)-f(x)}{y-x}.
\end{equation*}
\begin{theorem}{\cite[Corollary 4]{nonloc_loc_infty}}
Assume \ref{hyp:v}-\ref{hyp:eta} and suppose $u_o \in \mathcal{U}$ satisfies $\Lip^-u_o < +\infty$ and $\inf_{x \in \reali} u_o>0$. Then for all $t \geq 0$ the family of solutions to the nonlocal Cauchy problem \eqref{eq:23} $u_H(t, \cdot)$ strongly converges in $\Lloc1(\reali)$ as $H \to 0^+$ to $u(t, \cdot)$, the entropy solution to \eqref{eq:24}.
\end{theorem}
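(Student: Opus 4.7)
The strategy I would follow is the classical compactness route for \emph{nonlocal-to-local} limits: obtain uniform a priori bounds on $u_H$, extract an $\Lloc1$-convergent subsequence, and identify the limit by passing to the entropy inequality. Both structural hypotheses $\inf u_o > 0$ and $\Lip^- u_o < +\infty$ must be used in an essential way, since \cite{nonloc_loc_infty} exhibits blow-up counterexamples when either is dropped.

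\Cref{teo:1} already provides the uniform bounds
\[
0 < \inf_{x \in \reali} u_o \leq u_H(t,x) \leq \esssup_{x \in \reali} u_o \leq u_{max}, \qquad \forall\, H>0,\ t \in [0,T].
\]
The crucial step is to prove that the one-sided Lipschitz constant of $u_H(t,\cdot)$ is controlled uniformly in $H$, namely
\[
\Lip^- u_H(t, \cdot) \leq C\bigl(T,\, \Lip^- u_o,\, \inf u_o\bigr), \qquad t \in [0,T],\ H \in (0,1].
\]
To this end I would rewrite~\eqref{eq:23} in non-conservative form
\begin{equation*}
\partial_t u_H + V_H \, \partial_x u_H = - u_H\, v'(u_H * \eta_H)\, (u_H * \eta_H'),
\qquad V_H := v(u_H * \eta_H),
\end{equation*}
differentiate in $x$, and study the evolution along the characteristics $\dot X = V_H(t,X)$ of the ratio $\rho_H := \partial_x u_H / u_H$, which is well-defined thanks to the strict positivity of $u_H$. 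The terms driving $\rho_H$ toward $-\infty$ carry a factor $v'(\cdot) < -\delta$ and, modulo convolution, $(\rho_H)^2$, yielding a Riccati-type differential inequality; the pointwise sign condition $\eta \leq D\eta'$ in~\ref{hyp:eta}, the concavity $v'' \leq 0$, and the lower bound on $u_H$ are precisely what is needed to close it and produce the desired estimate.

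Once the uniform $\Lip^-$ bound is available, the map $x \mapsto u_H(t,x) + C x$ is non-decreasing, so
\[
\tv\!\left(u_H(t,\cdot); [a,b]\right) \leq 2\, u_{max} + 2 C (b-a)
\]
uniformly in $H$. Inserting $\norma{V_H}_{\L\infty} \leq \norma{v}_{\L\infty([0,u_{max}];\reali)}$ into the weak form of~\eqref{eq:23} yields equicontinuity in $t$ of $u_H$ as a map into $\Lloc1(\reali; \reali)$. A Helly-type compactness argument therefore extracts a subsequence $u_{H_k}$ converging in $\C0([0,T]; \Lloc1(\reali; \reali))$ to some $\tilde u$. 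Since $\eta_{H_k}$ concentrates to the Dirac mass, $u_{H_k} * \eta_{H_k} \to \tilde u$ in $\Lloc1$ as well, and the weak form of~\eqref{eq:23} passes to the weak form of~\eqref{eq:24}.

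To identify $\tilde u$ as the Kru\v zkov entropy solution, I would derive a nonlocal Kru\v zkov inequality for $u_H$ (as in \cite[Lemma~5.1]{zbMATH05908226} via smooth approximations of $\modulo{\cdot - k}$), whose defect term scales like $\norma{v(u_H * \eta_H) - v(u_H)}_{\L1}$ and vanishes as $H \to 0^+$ by the uniform BV bound combined with the concentration of $\eta_H$. Uniqueness in \Cref{teo:5} then forces the whole family, not only the subsequence, to converge to $u$. The principal obstacle is clearly the Riccati estimate of step two: both the sign condition on $\eta'$ and the lower bound $\inf u_o > 0$ must be exploited simultaneously, and the estimate is sharp in the sense that dropping either hypothesis is known to allow total variation blow-up.
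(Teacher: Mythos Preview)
The paper does not supply its own proof of this cited result, but it reproves (and slightly extends) it inside the proof of \Cref{teo:7}, Case~\ref{case A}; that is the natural point of comparison. Your compactness skeleton---uniform $\L\infty$ bound, uniform $\Lip^-$ bound, local $\BV$ bound via \Cref{lem:4}, time equicontinuity from the weak form, Helly/Ascoli--Arzel\`a---is exactly the one the paper follows, and the Riccati mechanism you outline for the one-sided Lipschitz bound is precisely what \cite[Theorem~3]{nonloc_loc_infty} does (the paper simply invokes that result, obtaining~\eqref{eq:15} under the smallness condition $H \leq \inf u_o/(2DL)$).

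The one genuine divergence is in the identification step. You propose to derive a nonlocal Kru\v zkov inequality for $u_H$ and pass to the limit using that the defect $v(u_H*\eta_H)-v(u_H)$ vanishes in $\Lloc1$. The paper instead takes the shorter Ole\u{\i}nik route: the uniform estimate $\Lip^- u_H(t,\cdot)\leq (2\delta t)^{-1}$ survives the limit by lower semicontinuity of $f\mapsto\Lip^- f$ in $\Lloc1$, and since the flux $u\mapsto u\,v(u)$ is strictly concave under~\ref{hyp:v}, any weak solution obeying an Ole\u{\i}nik one-sided bound is automatically the entropy solution \cite[Chapter~8.5]{dafermos}. Your Kru\v zkov argument is more robust (it would survive without concavity), but it requires extra care: the nonlocal entropy flux reads $\sgn(u_H-k)(u_H-k)V_H$ with $V_H=v(u_H*\eta_H)$, which does \emph{not} converge directly to the local entropy flux $\sgn(u-k)(f(u)-f(k))$; one must also control the residual term $\sgn(u_H-k)\,k\,\partial_x V_H$ coming from the non-divergence form, and this is where the uniform $\BV$ bound on $u_H*\eta_H$ is needed. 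Here the Ole\u{\i}nik shortcut buys you exactly that bookkeeping.
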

The boundedness assumption of $\Lip^-$ of the initial datum prevents it from featuring negative jumps.
In particular, this technical restriction allows to apply Helly-Kolmogorov compactness Theorem in the proof to the \emph{nonlocal-to-local} convergence and to identify the limit as the entropy solution to~\eqref{eq:24} satisfying an Ole\u{\i}nik estimate.
On the other hand, the strict positivity condition $\inf_{x \in \reali} u_o(x) > 0$ is necessary to obtain total variation estimates on the solutions $u_H$ (see \cite[Theorem 6]{nonloc_loc_infty}), although it is not suitable for applications, for example in the context of traffic flow models.
A way to overcome such difficulty is proposed in \cite{nonloc_loc_exp, Nonloc-to-loc-convex}, where the compactness argument is applied to the convoluted terms $w_H \coloneqq u_H*\eta_H$, rather than to the solutions $u_H$. In the first reference, the authors deal with the case of the exponential kernel, i.e. $\eta(x)= e^x\chi_{(-\infty,0]}(x)$ and observe that the convoluted terms 
\begin{equation}\label{eq:31}
   w_H (t,x) \coloneqq \left( u_H(t)*\eta_H\right)(x) = \frac{1}{H} \int_x^{+\infty} u_H(t,y)e^{\left(x-y\right)/H} \dd y 
\end{equation}
obey to a transport equation with nonlocal source and feature a decrease of the spatial total variation in time. As a result, they are able to apply a compactness argument for the sequence $\left\{ w_H \right\}_H$ in $\C0([0,T];\Lloc1(\reali; \reali))$ and the identity $u_H = w_H - \frac{1}{H}\partial_x w_H$ (valid for the exponential kernel) allows to state a strong convergence result of $\{u_H\}_H$ to $u$, the entropy solution to~\eqref{eq:24}. 

\begin{theorem}{\cite[Theorem 2.4]{nonloc_loc_exp}}
Suppose that $u_o \in \mathcal{U}$,~\ref{hyp:v} holds and $\eta$ is the exponential kernel. Let for all $H>0$ $u_H \in \C0([0,T]; \Lloc1(\reali; \reali))$ be the solution to~\eqref{eq:23}. Then, the families $u_H, u_H*\eta_H \in \C0([0,T]; \Lloc1(\reali; \reali))$ converge in $\C0([0,T]; \Lloc1(\reali;\reali))$ as $H \to 0^+$ to the entropy solution to~\eqref{eq:24}.
\end{theorem}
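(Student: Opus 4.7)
The plan is to obtain compactness not for $\{u_H\}_H$, which only inherits the $\L\infty$ bound from the maximum principle of \Cref{teo:1}, but for the convolved sequence $w_H\coloneqq u_H*\eta_H$ given in~\eqref{eq:31}, and then to transfer the convergence back to $u_H$ through a pointwise identity peculiar to the exponential kernel.

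Since $\eta_H(x)=\frac{1}{H}e^{x/H}\chi_{(-\infty,0]}(x)$ satisfies $\partial_x\eta_H=\frac{1}{H}\eta_H-\frac{1}{H}\delta_0$, a direct computation on~\eqref{eq:31} yields $H\,\partial_x w_H = w_H-u_H$, i.e.\ $u_H = w_H - H\,\partial_x w_H$. Substituting this identity into~\eqref{eq:23} produces a transport-type equation for $w_H$ with transport velocity $v(w_H)$ and a nonlocal right-hand side; this is the structure that allows $\tv(w_H(t,\cdot))$ to be propagated. By \Cref{teo:1}, $u_H(t,x)\in[0,u_{max}]$, and since $\eta_H\geq 0$ with unit integral the same bound passes to $w_H$. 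Exploiting the cancellation provided by the exponential kernel in the source term, one expects $\tv(w_H(t,\cdot))\leq\tv(w_H(0,\cdot))\leq\tv(u_o)$ together with an equi-Lipschitz-in-time estimate $\norma{w_H(t)-w_H(s)}_{\Lloc1(\reali;\reali)}\leq C\modulo{t-s}$, both uniform in $H$.

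By Helly's selection theorem applied at each $t$ combined with the time equicontinuity, I extract a subsequence $w_{H_n}\to w$ in $\C0([0,T];\Lloc1(\reali;\reali))$. Since $\partial_x w_H\in\L1(\reali;\reali)$ with $\norma{\partial_x w_H}_{\L1(\reali;\reali)}\leq\tv(w_H(t,\cdot))\leq\tv(u_o)$ uniformly, the identity above yields
\[
\norma{u_H-w_H}_{\L1(\reali;\reali)} \;=\; H\,\norma{\partial_x w_H}_{\L1(\reali;\reali)} \;\leq\; H\,\tv(u_o)\;\xrightarrow[H\to 0^+]{}\;0,
\]
so the same subsequence $u_{H_n}$ converges to $w$ in $\C0([0,T];\Lloc1(\reali;\reali))$. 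To identify $w$ as the Kru\v zkov entropy solution of~\eqref{eq:24}, I pass to the limit in the entropy inequalities satisfied by $u_H$ (as the nonlocal entropy solution), using the strong $\Lloc1$ convergences $u_H\to w$ and $v(w_H)\to v(w)$; the uniqueness in \Cref{teo:5} then upgrades the subsequential convergence to convergence of the whole family as $H\to 0^+$.

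The main obstacle is the uniform total-variation bound on $w_H$: one has to show that the nonlocal source produced when rewriting the flux of~\eqref{eq:23} through $u_H=w_H-H\,\partial_x w_H$ does not destroy the transport-type TV monotonicity, and this is precisely the step where the specific algebra of the exponential kernel is indispensable, since for a general kernel the term $\partial_x\eta_H$ cannot be rewritten as a multiple of $\eta_H$ modulo a boundary contribution. Once the strong $\Lloc1$ convergence of both $u_H$ and $v(w_H)$ is in hand, the identification of $w$ as an entropy solution is technical but follows standard lines.
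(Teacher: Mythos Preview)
Your proposal is correct and follows essentially the same route that the paper itself sketches (the paper does not give a self-contained proof of this theorem but quotes it from \cite{nonloc_loc_exp}, summarizing the strategy in the paragraph preceding the statement and reusing the same steps in the proof of \Cref{teo:7}, Case~\ref{case B}): pass to $w_H=u_H*\eta_H$, use that $w_H$ solves a transport equation with nonlocal source for which $\tv(w_H(t,\cdot))\leq\tv(u_o)$, extract a convergent subsequence via Helly/Ascoli--Arzel\`a, transfer the convergence to $u_H$ through the exponential-kernel identity $u_H=w_H-H\,\partial_x w_H$ and the bound $\norma{u_H-w_H}_{\L1}\leq H\,\tv(u_o)$, and finally identify the limit as the entropy solution. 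The only minor deviation is in the identification step: you pass to the limit in the entropy inequalities for $u_H$, whereas the paper (in Case~\ref{case A}) identifies the limit as a weak solution satisfying an Ole\u{\i}nik estimate and then says Case~\ref{case B} is handled ``similarly''; both approaches work here.
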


For completeness, we add that in \cite{Nonloc-to-loc-convex} the authors observe that the total variation decreasing property of $w_H$ is strictly linked to the shape of the kernel function, which is required to be convex on $\reali_-$. Under such hypothesis and $u_o \in \mathcal{U}$, the authors prove that $w_H \to u$ in $\Lloc1([0,T]\times \reali; \reali)$ and $u_H \xrightharpoonup[]{*} u$ in $\L\infty([0,T]\times \reali; \reali)$ where $u$ is the entropy solution to~\eqref{eq:24}. 
In the following, we will concentrate on the settings in \cite{nonloc_loc_infty} and \cite{nonloc_loc_exp}.

\section{Control problem in the \emph{nonlocal-to-local} limit}
We devoted this section to the study of the control problem~\eqref{eq:42} by means of the \emph{nonlocal-to-local} limit. We here characterize as $H \to 0^+$ the limit (up to a subsequence) of minimizers to $\mathcal{G}_H$ (\eqref{eq:43}) as minimizer to $\mathcal{G}$. 
The tool we will exploiting is $\Gamma$-convergence of the functionals and an "improved" version of the \emph{nonlocal-to-local} limit. 

At first, we present a functional analysis result which will be exploited in the following. 
\begin{lemma} \label{lem:4}
    Let $f \in \L\infty(\reali; \reali)$ such that $\Lip^- f < + \infty$ and $f$ is pointwise well-defined for all $x \in \reali$. 
    Then $f \in \BVloc(\reali; \reali)$.
    In particular, for all $R > 0 $
    \begin{equation}\label{eq:33}
        \tv (f; [-R; R]) \leq 4 R\, \Lip^-f  + \sup_{x \in \reali} f(x) - \inf_{x \in \reali} f(x).
    \end{equation}
\end{lemma}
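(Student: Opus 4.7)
My plan is to exhibit $f$ as the sum of a nondecreasing function and a linear one, and then conclude by subadditivity of the total variation. First I would set $L := \Lip^- f \geq 0$. By definition of $\Lip^-$, the finiteness assumption rewrites as $f(y) - f(x) \geq -L(y-x)$ for every $x < y$, which is equivalent to saying that the pointwise-defined auxiliary function
$$g(x) := f(x) + Lx$$
is nondecreasing on $\reali$. Since $g$ is monotone and everywhere defined, $g \in \BVloc(\reali;\reali)$ and for every $R > 0$
$$\tv(g; [-R, R]) = g(R) - g(-R) = f(R) - f(-R) + 2LR.$$

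The second step is to recover $f = g - L\cdot \mathrm{id}$ as a difference of a $\BVloc$ function and a linear one. By subadditivity of the total variation and the fact that $x \mapsto Lx$ is monotone on $[-R, R]$ with total variation $2LR$,
$$\tv(f; [-R, R]) \leq \tv(g; [-R, R]) + \tv(Lx; [-R, R]) = f(R) - f(-R) + 2LR + 2LR.$$
Finally, bounding $f(R) - f(-R) \leq \sup_{\reali} f - \inf_{\reali} f$ yields the claimed estimate~\eqref{eq:33} and in particular shows $f \in \BVloc(\reali;\reali)$.

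I do not expect a real obstacle: the whole argument is essentially a reformulation of the one-sided Lipschitz condition as a monotone-plus-linear decomposition. The only subtlety is the use of the pointwise-definedness hypothesis, which is what legitimises evaluating $g$ at the endpoints $\pm R$ and turning $\tv(g;[-R,R])$ into an honest difference rather than an essential-supremum expression; without it one would have to first pass to a good representative, obtaining the same bound up to boundary corrections that do not affect \eqref{eq:33}.
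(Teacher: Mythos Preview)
Your argument is correct and is precisely the standard monotone-plus-linear decomposition one would expect here. The paper does not actually write out a proof of this lemma: it simply refers the reader to \cite[Proof to Corollary~4]{nonloc_loc_infty}, and the argument there is essentially the same as yours (split $f$ as a nondecreasing function plus a linear correction, then use subadditivity of the total variation). So your proposal matches the intended proof, and your remark on why the pointwise-definedness hypothesis is used is accurate.
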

\begin{proof}
See \cite[Proof to Corollary 4]{nonloc_loc_infty}.
\end{proof}
We now present an "improved" result on the \emph{nonlocal-to-local} limit: as $n \to + \infty$ we study simultaneously the convergence of the kernel functions $\eta_{H_n}$ to the Dirac delta and the convergence of the initial data to a fixed $u_o$ in a suitable admissible set $\mathcal{U}_{ad}$.
Some remarks concerning the choice of $\mathcal{U}_{ad}$ are discussed below. 
\begin{theorem}[Improved result on the \emph{nonlocal-to-local} limit]\label{teo:7}
Let $0 < u_{min} \leq u_{max}, \, L,M>0$ and assume \ref{hyp:v}. 
Assume either 
\begin{enumerate} [label=$\textbf{(A)}$]
    \item \label{case A}
    \ref{hyp:eta} holds and  
\begin{equation}\label{eq:17}
 \mathcal{U}_{ad} \coloneqq \left\{ u_o \in \left(\L\infty \cap \BV \right)(\reali; \reali) : \norma{u_o}_{\L\infty(\reali; \reali)} \leq u_{max},\, \inf_{x \in \reali} u_o(x) \geq u_{min}, \, \Lip^-u_o \leq L\right\}   
\end{equation}
\end{enumerate}
\vspace{-0.5cm}
or 
\begin{enumerate} [label=$\textbf{(B)}$]
    \item \label{case B}
    $\eta$ is the exponential kernel and 
\begin{equation}\label{eq:32}
 \mathcal U_{ad} \coloneqq \left\{
 u_o \in 
 (\L\infty \cap \BV)(\reali; \reali) \colon
 \norma{u_o}_{\L\infty(\reali; \reali)} \leq u_{max}, \inf_{x \in \reali} u_o(x) \geq 0,\tv(u_o) \leq M
 \right\}
\end{equation}
\end{enumerate}
where, in both cases, we endow $\mathcal{U}_{ad}$ with the metric induced by the family of semi-norms in $\Lloc1(\reali;\reali)$.
For all $u_o \in \mathcal{U}_{ad}$ and any sequence $u_o^n \in \mathcal{U}_{ad}$ convergent to $u_o $ in $\Lloc1(\reali; \reali)$, call $u^n$ the solution to 
\begin{equation}\label{eq:35}
\begin{cases}
    \partial_t u^n + \partial_x \left(u^n v\left(u^n*\eta_{H_n} \right) \right) =0, \\
    u^n(0,\cdot) = u_o^n.
\end{cases}
\end{equation}
and let $u$ be the entropy solution to~\eqref{eq:24}.
Then,  $u^n \to u$ in $\C0([0,T]; \Lloc1( \reali; \reali)$.
\end{theorem}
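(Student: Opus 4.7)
My plan is to decouple the two simultaneous limits---the vanishing nonlocal horizon $H_n \to 0^+$ and the varying initial datum $u_o^n \to u_o$---via a triangle inequality. For each $n$, let $\tilde u^n \in \C0([0,T]; \Lloc1(\reali; \reali))$ denote the solution to the nonlocal Cauchy problem~\eqref{eq:35} with the same kernel $\eta_{H_n}$ but with the \emph{limit} initial datum $u_o$ in place of $u_o^n$. Then, for any $R>0$ and $t \in [0,T]$,
\begin{equation*}
\norma{u^n(t) - u(t)}_{\L1([-R,R]; \reali)} \leq \norma{u^n(t) - \tilde u^n(t)}_{\L1([-R,R]; \reali)} + \norma{\tilde u^n(t) - u(t)}_{\L1([-R,R]; \reali)}.
\end{equation*}
The second summand vanishes as $n \to +\infty$ by the fixed-datum \emph{nonlocal-to-local} theorems recalled above: \cite[Corollary 4]{nonloc_loc_infty} in case~\ref{case A}, whose hypotheses ($\Lip^- u_o < +\infty$ and $\inf_{x \in \reali} u_o(x) \geq u_{min} > 0$) are guaranteed by $u_o \in \mathcal{U}_{ad}$, and \cite[Theorem 2.4]{nonloc_loc_exp} in case~\ref{case B}; both results provide convergence uniform in $t \in [0,T]$.

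For the first summand, the strategy is to invoke an $\Lloc1$-stability of the nonlocal semigroup with respect to the initial datum, with constants independent of $H$. The maximum principle of \Cref{teo:1} yields uniform $\L\infty$ bounds on $\{u^n\}_n$ and $\{\tilde u^n\}_n$, while membership in $\mathcal{U}_{ad}$ supplies uniform $\BVloc$ control---through \Cref{lem:4} applied on a sufficiently large interval in case~\ref{case A}, and directly via $\tv(u_o^n) \leq M$ in case~\ref{case B}. A Kru\v{z}kov-doubling argument of the type available in the nonlocal literature then yields
\begin{equation*}
\norma{u^n(t) - \tilde u^n(t)}_{\L1([-R,R]; \reali)} \leq C \norma{u_o^n - u_o}_{\L1([-R', R']; \reali)}
\end{equation*}
for suitable $R' \geq R$ and $C$ independent of $n$ (depending only on $T$ and on the bounds encoded in $\mathcal{U}_{ad}$). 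Since $u_o^n \to u_o$ in $\Lloc1(\reali; \reali)$, this summand vanishes uniformly in $t \in [0,T]$.

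The main obstacle is securing the $H$-uniformity of the stability constant $C$. Should that prove too delicate, a self-contained alternative is pure compactness: the uniform $\L\infty$ and $\BVloc$ bounds, combined with equicontinuity in time derived from the PDE, let Helly--Kolmogorov produce a subsequence $u^{n_k} \to u^*$ in $\C0([0,T]; \Lloc1(\reali; \reali))$. Since $H_{n_k} \to 0^+$ and $u^{n_k} \to u^*$, also $u^{n_k} * \eta_{H_{n_k}} \to u^*$ in $\Lloc1$; passing to the limit in the weak formulation of~\eqref{eq:35} and in the Kru\v{z}kov entropy inequality---mimicking the fixed-datum arguments of \cite{nonloc_loc_infty} in case~\ref{case A}, and working with $w_n \coloneqq u^n * \eta_{H_n}$ together with the identity $u^n = w_n - H_n \partial_x w_n$ as in \cite{nonloc_loc_exp} in case~\ref{case B}---identifies $u^*$ as the entropy solution to~\eqref{eq:24} with datum $u_o$. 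Uniqueness (\Cref{teo:5}) then promotes subsequential to full convergence.
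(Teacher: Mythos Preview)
Your triangle-inequality decomposition is natural but the obstacle you flag is real and fatal for that route: the available $\Lloc1$-stability estimates for~\eqref{eq:23} (e.g.\ in \cite{zbMATH07213667,Keimer-Pflug}) carry constants involving $\Lip(\eta_H)$ or $\norma{\eta_H'}_{\L1}$, which blow up as $H\to 0^+$. An $H$-uniform stability bound would essentially amount to re-proving the nonlocal-to-local limit itself, so this decomposition does not bypass the difficulty.

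Your compactness alternative is precisely the paper's approach, but your sketch glosses over the key step in case~\ref{case A}. Uniform $\BVloc$ control on $u^n(t,\cdot)$ for $t>0$ does \emph{not} follow from \Cref{lem:4} applied to the initial data: one needs the propagation estimate \cite[Theorem~3]{nonloc_loc_infty}, which gives $\Lip^- u^n(t,\cdot)\leq L/(2\delta L t+1)$ provided $H_n\leq \inf_x u_o^n/(2DL)$. It is exactly the uniform lower bound $\inf_x u_o^n\geq u_{min}$ built into $\mathcal{U}_{ad}$ that makes this threshold $u_{min}/(2DL)$ independent of $n$; \Cref{lem:4} then converts the $\Lip^-$ bound into a $\BVloc$ bound at each $t$, and time-equicontinuity is derived from the PDE. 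For the identification of the limit, the paper does not pass to the limit in the Kru\v{z}kov entropy inequalities (the nonlocal coupling makes this delicate); instead it passes in the weak formulation and uses that the Ole\u{\i}nik bound survives by lower semicontinuity of $\Lip^-$, which under~\ref{hyp:v} is enough to single out the entropy solution. In case~\ref{case B} your remark about working with $w^n=u^n*\eta_{H_n}$ and the identity $u^n=w^n-H_n\partial_x w^n$ matches the paper exactly.
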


We now add some remarks on the choice of the space $\mathcal{U}_{ad}$ in~\eqref{eq:17}. 
First, for $u_o \in \mathcal{U}_{ad}$ the condition $\tv(u_o) < + \infty $ is not redundant: indeed, the bound on the $\L\infty$-norm and on $\Lip^-(u_o)$ do guarantee a bound of $\tv(u_o)$ on all compact sets, but not on the entire domain, as the previous conditions does not prevent the function $u_o$ to eventually oscillate. 
Second, the condition $\inf_{x \in \reali} u_o(x) \geq u_{min}$ is necessary to prove the \emph{lower bound inequality} required for $\Gamma$-convergence. 
Indeed, if we substitute the foretold hypothesis with the mere assumption $\inf_{x \in \reali} u_o(x) > 0$, for each $u_o \in \mathcal{U}_{ad}$ we may find a convergent sequence $u_o^n$ in $\mathcal{U}_{ad}$ such that $\inf_{n \in \naturali} \inf_{x \in \reali} u_o^n(x) = 0 $, ruling out the uniform upper bound for $H_n$ such that~\eqref{eq:15} in Proof to \Cref{teo:4} holds. To fix the ideas, suppose $u_o \in \mathcal{U}_{ad}$ is right continuous and consider
\[
u_o^n \coloneqq 
\begin{cases}
    u_o(x) \quad & \text{if}\, x \geq -n, \\
    \min \left(1/n, \lim_{y \to (-n)^+}u_o(y) \right)  \quad  & \text{if}\, x < -n,
\end{cases}
\]
which meets $\Lip^-u_o^n \leq L, \tv(u_o^n) < + \infty, 0< u_o^n \leq u_{max}$ and $0 < \inf_{x \in \reali }u_o^n(x) \leq \frac{1}{n}$ for all $n \in \naturali$.
Third, the uniform bounds $\Lip^-u_o \leq L$ (Case \ref{case A}) and $\tv(u_o)\leq M$ (Case \ref{case B}) are necessary to perform the compactness argument which leads the singular limit to hold. 
\begin{proof}
\textbf{Case \ref{case A}.}
Set $u_o \in \mathcal{U}_{ad}$ and a sequence $u_o^n$ converging to $u_o$ in $\Lloc1(\reali; \reali)$.
For sake of comfort call $u^n \in \C0([0,T]; \Lloc1(\reali; \reali))$ the solution to the Cauchy problem~\eqref{eq:35}, which is well-posed owing to \Cref{teo:1}.
We now claim that there exists $u \in \C0([0,T]; \Lloc1(\reali; \reali))$ and a subsequence $u^{n_k}$ such that $u^{n_k}\to u$ strongly in $\C0([0,T]; \Lloc1(\reali; \reali ))$. This will follow from an application of Ascoli-Arzelà theorem and extends the result \cite[Corollary 4]{nonloc_loc_infty}.
Indeed, we first observe that the following holds: 
\begin{itemize}
    \item \textbf{For all $t \in [0,T]$ the sequence $\{u^n(t)\}_n$ is precompact in $\Lloc1(\reali; \reali)$.}
    \\
    By \cite[Theorem 3]{nonloc_loc_infty}, assuming $H_n \leq \frac{\inf_{x \in \reali}u_o^n}{2DL}$, the following uniform bound holds:
\begin{equation}\label{eq:15}
    \Lip^-u^n(t, \cdot) \leq \frac{L}{2 \delta L t +1}< \min\ \left(\frac{1}{2 \delta t}, L\right) \quad \text{for all}\, t\geq 0.
\end{equation}
A uniform upper bound for the choice of $H_n$ is then provided by $
    H_n \leq \frac{u_{min}}{2DL} $.
The inequality~\eqref{eq:15} implies a uniform bound on the total variation of $u^n(t, \cdot)$ on compact sets for all $t \in [0,T]$.
In fact, for all $R >0$ and $n \in \naturali$, by \Cref{lem:4}, one obtains that 
 \begin{equation}\label{eq:36}
         \tv (u^n(t,\cdot); [-R; R]) \leq 4R L  + u_{max} - u_{min}.
    \end{equation}
Recall moreover the maximum principle in \Cref{teo:1}. Then, Helly's Compactness Theorem (see \cite[Theorem 2.3]{Bressan}) allows to conclude that up to a subsequence $u^{n}(t, \cdot) \to u(t, \cdot)$ in $\Lloc1(\reali; \reali)$.
\item \textbf{$\{ u^n\}_n$ is uniformly equi-continuous in $\C0([0,T]; \Lloc1(\reali; \reali))$.}
\\
We observe here that the spatial bound on the total variation~\eqref{eq:36} can be transported to a local Lipschitz continuity in time of $u^n$ by exploiting the PDE it satisfies. 
We follow here the path of \cite[Theorem 4.3.1]{dafermos}. 
As $u^n$ is
  a distributional solution to~\eqref{eq:35}, for any
  $t_1,t_2 \in [0,T]$ with $t_1 < t_2$,
  $\phi \in \C1\left([0,T];\reali\right)$ with $\phi (t) = 1$ for all
  $t \in [0,t_2]$, $\psi \in \Cc1 ([-R,R];\reali)$ with
  $\norma{\psi}_{\L\infty (\reali;\reali)} \leq 1$, we have for
  $i=1,2$
  \begin{align*}
   0= & \int_{-R}^R u^n (t_i,x) \, \phi (t_i)\, \psi (x)\d{x} + \int_{t_i}^T \int_{-R}^R u^n (t,x) \, \partial_t \phi (t) \, \psi (x) \d{x} \d{t}
    \\
    &
    +
    \int_{t_i}^T \int_{-R}^R
    u^n (t,x) \, v\left(\left(u^n(t)*\eta_{H_n}\right)(x)\right) \, \phi (t) \, \partial_x \psi (x)
    \d{x} \d{t} 
  \end{align*}
Subtracting the two expressions, one gets, applying the definition of total variation \cite[Def. 3.4]{Ambrosio-Fusco-Pallara} and \cite[Ex. 3.17]{Ambrosio-Fusco-Pallara}
\begin{align}
\nonumber
&
    \int_{-R}^R \left(u^n(t_2, x) - u^n(t_1, x) \right) \psi(x) \dd x
    \\
    \nonumber
    =
    &
    \int_{t_1}^{t_2}\int_{-R}^R u^n(t,x) v\left(\left(u^n(t)*\eta_{H_n}\right)(x)\right) \, \phi (t) \, \partial_x \psi (x)
    \d{x} \d{t} 
    \\
    \nonumber
    \leq &
    \int_{t_1}^{t_2} \tv\left( u^n(t, \cdot)v\left(\left(u^n(t)*\eta_{H_n}\right)(\cdot)\right); [-R,R]
    \right) \dd t
    \\
    \label{eq:38}
    \leq &
    \int_{t_1}^{t_2} \norma{u^n(t)}_{\L\infty([-R,R]; \reali)} 
    \tv\left(
    v\left(\left(u^n(t)*\eta_{H_n}\right)(\cdot)\right); [-R,R]
    \right) \dd t
    \\
    \nonumber
    &
    + \int_{t_1}^{t_2} \tv\left(u^n(t, \cdot); [-R;R]\right) \norma{v}_{\L\infty(\reali; \reali)} \dd t .
\end{align}
We proceed now evaluating the term 
\begin{align}
\nonumber
    \tv\left(
    v\left(\left(u^n(t)*\eta_{H_n}\right)(\cdot)\right); [-R,R]
    \right) 
    \leq &
    \norma{v'}_{\L\infty([0,u_{max}]; \reali)} \tv\left(
   \left( u^n(t)*\eta_{H_n}\right)(\cdot); [-R,R]
    \right)
    \\
    \label{eq:37}
    \leq & 
    \norma{v'}_{\L\infty([0,u_{max}]; \reali)} \tv\left(
    u^n(t,\cdot); [-R,R]
    \right)
\end{align}
where we exploit \cite[Lemma 5.2]{Keimer-Pflug} and the fact that $$\tv\left(
   \left( u^n(t)*\eta_{H_n}\right)(\cdot); [-R,R]
    \right) \leq \norma{\eta_{H_n}}_{\L1(\reali; \reali)}\tv\left(
    u^n(t,\cdot); [-R,R]
    \right) .$$
Hence, inserting~\eqref{eq:36} and~\eqref{eq:37} in~\eqref{eq:38}, we obtain 
\begin{align*}
&
    \int_{-R}^R \left(u^n(t_2, x) - u^n(t_1, x) \right) \psi(x) \dd x 
    \\
    \leq
    &
    \int_{t_1}^{t_2}
    \left(
    u_{max} \norma{v'}_{\L\infty([0,u_{max}]; \reali)}
    +
    \norma{v}_{\L\infty([0,u_{max}]; \reali)}
    \right)\tv\left(
    u^n(t,\cdot); [-R,R]
    \right) \dd t
    \\
    \leq 
    &
    \left(
    u_{max} \norma{v'}_{\L\infty([0,u_{max}]; \reali)}
    +
    \norma{v}_{\L\infty([0,u_{max}]; \reali)}
    \right) \left( 4R L + u_{max} - u_{min}\right) \left( t_2 - t_1\right)
\end{align*}
and, passing to the $\sup$ of $\psi \in \Cc1([-R,R]; \reali)$ with $\norma{\psi}_{\L\infty(\reali; \reali)} \leq 1$ we conclude that $u^n \in \C0([0,T]; \Lloc1(\reali; \reali))$ is Lipschitz continuous in time, namely
\begin{equation}
    \norma{u^n(t_2)-u^n(t_1)}_{\L1([-R,R]; \reali)} \leq \mathcal L(R)  \left( t_2 - t_1\right)
\end{equation}
with $\mathcal L(R) \coloneqq \left(
    u_{max} \norma{v'}_{\L\infty([0,u_{max}]; \reali)}
    +
    \norma{v}_{\L\infty([0,u_{max}]; \reali)}
    \right) \left( 4RL + u_{max} - u_{min}\right) $.\\
As $\mathcal{L}(R)$ is independent of $n \in \naturali$, we have proved the uniform equi-continuity claimed.
\end{itemize}
Hence, an adaptation of Ascoli-Arzelà theorem guarantees that there exists a subsequence $u^{n_k}\to u$ in $\C0([0,T]; \Lloc1(\reali; \reali))$ with $u \in \C0([0,T]; \Lloc1(\reali; \reali))$ satisfying the Ole\u{\i}nik type estimate
\begin{equation}
    \Lip^- u(t, \cdot) < \frac{1}{2\delta t} \quad \text{for all }\, t\geq 0.
\end{equation}
due to the lower semi-continuity of the map $f \to \Lip^-(f)$ w.r.t. the $\Lloc1$ topology.

We conclude underlying that $u$ is actually a weak solution to~\eqref{eq:24}. Indeed, by the definition of weak solution to the Cauchy problem~\eqref{eq:35}, for all $k \in \naturali$, $u^{n_k}$ satisfies for all $\phi \in \Cc1(]-\infty, T[ \times \reali; \reali)$
      \begin{align*}
    \int_{0}^T \int_\reali u^{n_k} (t,x) \, \partial_t \phi (t,x) \d{x} \d{t}
    +
    \int_{0}^T \int_\reali
    u^{n_k} (t,x) \, v\left(\left(u^{n_k}(t)*\eta_{H_{n_k}}\right)(x)\right) \, \partial_x \phi (t,x)
    \d{x} \d{t} &
    \\
    +
    \int_\reali u^{n_k}_o (x) \, \phi (0,x)\,\d{x}
     =
    & 0.
  \end{align*}
Finally, the Dominated Convergence Theorem allows to pass the previous expression to the limit $k \to +\infty$, proving the claim. 
Since $u \in \C0([0,T]; \Lloc1(\reali; \reali))$ is a weak solution to~\eqref{eq:24} satisfying the Ole\u{\i}nik estimate, we can conclude that it is also the entropy one (see \cite[Chapter 8.5]{dafermos}).
Finally, as the previous computations holds for any sequence $\{u_o^n\}_n \subset \mathcal{U}_{ad}$, we can conclude by an absurd argument that the whole sequence $u^n$ converges to $u$ in $\C0([0,T]; \Lloc1(\reali; \reali))$. 
\\
\textbf{Case \ref{case B}.}
As before, consider $u_o \in \mathcal{U}_{ad}$ and let $u_o^n \in \mathcal{U}_{ad}$ be any convergent sequence. 
Given $u^n \in \C0([0,T]; \Lloc1(\reali; \reali))$ the solution to \eqref{eq:35}, recall the convoluted terms $w^n\coloneqq w_{H_n}$ as defined in~\eqref{eq:31}. 
By an adaptation, it is possible to show that the nonlocal terms $w^n$ feature the same properties proved in \cite{nonloc_loc_exp}. In particular, for all $n \in \naturali$, $w^n \in \W1\infty([0,T] \times \reali; \reali)$ and is a strong solution to the following transport equation with nonlocal source: 
\begin{equation*}
  \!\!  \left\{
 \begin{array}{rl}
 \!\!\!\! \partial_tw^n(t,x) \! + \! v\left( 
        w^n(t,x)
        \right) \partial_xw^n(t,x)
        & \!\!\!\!
        = 
        -\frac{1}{H_n} \int_x^{+\infty} \exp\left(\frac{x-y}{H_n}\right) 
        v'\left( w^n(t,y)\right)
        \partial_yw^n(t,y)
        w^n(t,y) \dd{y} \\
       w^n(0,x) 
        & \!\!\!\!
        =
        \frac{1}{H_n}\int_x^{+\infty} \exp\left(\frac{x-y}{H_n}\right)u_o^n(y) \dd{y}.
 \end{array}
\right.
\end{equation*}
Moreover, see \cite[Theorem 3.2]{nonloc_loc_exp}, the following uniform bound for the total variation in space of $w^n$ holds: 
\begin{equation*}
    \tv(w^n(t, \cdot)) \leq \tv(w^n(0,\cdot)) \leq \tv(u_o^n) \leq M \quad \text{for all} \, n \in \naturali.
\end{equation*}
Hence, we can adapt \cite[Theorem 4.1]{nonloc_loc_exp} and conclude that $\{ w^n\}_n$ is compactly embedded in $\C0([0,T]; \Lloc1(\reali; \reali))$, i.e. up to a subsequence, $w^n \to u$ in $\C0([0,T]; \Lloc1(\reali; \reali))$. The computation $\partial_x w^n(t,x)=\frac{1}{H_n}w^n(t,x) - \frac{1}{H_n}u^n(t,x)$, implies that for all $R >0$
\begin{equation*}
    \norma{w^n(t, \cdot) - u^n(t,\cdot)}_{\L1([-R, R]; \reali)} \leq H_n \norma{\partial_x w^n(t, \cdot)}_{\L1([-R,R]; \reali)} = H_n \tv(w^n(t, \cdot)) \leq H_n M.
\end{equation*}
Then, one proves that 
\begin{align*}
&
\lim_{n\to \infty}\norma{u^n - u}_{\C0([0,T]; \L1([-R,R]; \reali))} 
\\
\leq &
\lim_{n\to \infty}\norma{u^n - w^n}_{\C0([0,T]; \L1([-R,R]; \reali))}  + \lim_{n\to \infty}\norma{w^n - u}_{\C0([0,T]; \L1([-R,R]; \reali))}  
\\
\leq &
\lim_{n\to 0 } \, H_n M + \lim_{n\to \infty}\norma{w^n - u}_{\C0([0,T]; \L1([-R,R]; \reali))}  
\\
=
&
\,0.
\end{align*}
Similarly as done in Case \ref{case A}, one can prove that $u$ is the entropy solution to~\eqref{eq:24}.
\end{proof}
We are now ready to present the $\Gamma$-convergence result of the functionals $\{\mathcal{G}_{H}\}_H$ defined in~\eqref{eq:43} to $\mathcal{G}$ in~\eqref{eq:42} as $H\to 0^+$ which, by \Cref{teo:7},  we are able to prove for both settings \ref{case A} and \ref{case B}. 
 \begin{theorem}[$\Gamma$-convergence]\label{teo:4}
Let $0 < u_{min} \leq u_{max}, \, L,M>0$.
And suppose 
\begin{enumerate}[label=$\bm{(\mathcal{J})}$]
    \item
    \label{func_J}
    $\mathcal J: \Lloc1(\reali; \reali) \to \reali_{\geq 0}$ is continuous w.r.t. the $\Lloc1$ topology; 
\end{enumerate}
\begin{enumerate}
[label=$\bm{(\mathcal{K})}$]
    \item 
    \label{func_K}
    $\mathcal K: \Lloc1([0,T] \times  \reali; \reali) \to \reali_{\geq 0}$ is continuous w.r.t. the $\Lloc1$ topology; 
\end{enumerate}
\begin{enumerate}
[label=$\bm{(\mathcal{I})}$]
    \item \label{func_I}
    $\mathcal I: \Lloc1(\reali; \reali) \to \reali_{\geq 0}$ is lower semi-continuous w.r.t. the $\Lloc1$ topology. 
\end{enumerate}
Assume either \ref{case A} or \ref{case B} holds and consider the family of functionals $\mathcal{G}_{H}: \mathcal{U}_{ad} \to \reali $ indexed by $H > 0 $
\begin{equation}\label{eq:1}
 \mathcal G_H(u_{o}) \coloneqq \mathcal J(u_H(T)) + \mathcal K(u_H) + \mathcal I(u_o) \qquad \text{with $u_H$ solution to~\eqref{eq:23}}.
\end{equation}
Moreover, define $\mathcal G: \mathcal{U}_{ad} \to \reali$ as
\begin{equation}\label{eq:19}
   \mathcal{G}(u_o) \coloneqq \mathcal{J}(u(T)) + \mathcal{K}(u) + \mathcal{I}(u_o) \qquad \text{with $u$ entropy solution to~\eqref{eq:24}} .
\end{equation}
Then, the family of functionals $\left\{ \mathcal G_H \right\}_H $ $\Gamma$-converges to $\mathcal{G}$ in the limit $H \to 0$. 
\end{theorem}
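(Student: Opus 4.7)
The plan is to verify the two defining inequalities of $\Gamma$-convergence with respect to the $\Lloc1(\reali;\reali)$ topology on $\mathcal{U}_{ad}$: for every sequence $H_n \to 0^+$, I need (i) the \emph{liminf inequality} $\liminf_n \mathcal{G}_{H_n}(u_o^n) \geq \mathcal{G}(u_o)$ for every $u_o^n \to u_o$ in $\mathcal{U}_{ad}$, and (ii) the existence of a \emph{recovery sequence} such that $\limsup_n \mathcal{G}_{H_n}(u_o^n) \leq \mathcal{G}(u_o)$. Fix $H_n \to 0^+$ throughout; in Case \ref{case A} we may assume without loss of generality that $H_n \leq u_{min}/(2DL)$ for every $n$, so that the hypotheses of \Cref{teo:7} are met.

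For the liminf inequality, let $u_o^n \to u_o$ in $\mathcal{U}_{ad}$ and denote by $u^n$ the solution to~\eqref{eq:35} with kernel scale $H_n$ and initial datum $u_o^n$, and by $u$ the entropy solution to~\eqref{eq:24} with initial datum $u_o$. The improved nonlocal-to-local limit \Cref{teo:7} gives $u^n \to u$ in $\C0([0,T]; \Lloc1(\reali;\reali))$. In particular, $u^n(T) \to u(T)$ in $\Lloc1(\reali;\reali)$ and, combining the uniform $\L\infty$ bound from the maximum principle of \Cref{teo:1} with dominated convergence, also $u^n \to u$ in $\Lloc1([0,T]\times \reali;\reali)$. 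Continuity hypotheses \ref{func_J} and \ref{func_K} then yield
\begin{equation*}
\mathcal{J}(u^n(T)) \to \mathcal{J}(u(T)), \qquad \mathcal{K}(u^n) \to \mathcal{K}(u),
\end{equation*}
while the lower semi-continuity hypothesis \ref{func_I} gives $\liminf_n \mathcal{I}(u_o^n) \geq \mathcal{I}(u_o)$. Summing the three terms yields the desired liminf inequality.

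For the limsup inequality, given any $u_o \in \mathcal{U}_{ad}$ I take the constant recovery sequence $u_o^n \equiv u_o$, which trivially lies in $\mathcal{U}_{ad}$ and converges to $u_o$ in $\Lloc1(\reali;\reali)$. Applying \Cref{teo:7} exactly as above (now with equality instead of inequality everywhere, since $\mathcal{I}(u_o^n) = \mathcal{I}(u_o)$ is constant) delivers $\mathcal{G}_{H_n}(u_o) \to \mathcal{G}(u_o)$ and in particular the required $\limsup$ bound. The bulk of the analytical work is absorbed into \Cref{teo:7}; once that singular-limit result is available the $\Gamma$-convergence reduces to routine continuity and semi-continuity manipulations, and I expect the only point that requires care is the passage from uniform-in-time $\Lloc1$ convergence to $\Lloc1$ convergence on the space-time cylinder needed for the $\mathcal{K}$-term, which however follows at once from the uniform $\L\infty$ bound on $\{u^n\}$.
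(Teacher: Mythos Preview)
Your proof is correct and follows essentially the same approach as the paper: both verify the sequential liminf and recovery-sequence conditions using \Cref{teo:7} for the convergence $u^n\to u$, the continuity of $\mathcal J,\mathcal K$, the lower semi-continuity of $\mathcal I$, and the constant recovery sequence $u_o^n\equiv u_o$. You are in fact slightly more explicit than the paper in justifying the passage from $\C0([0,T];\Lloc1)$ convergence to $\Lloc1([0,T]\times\reali)$ convergence needed for the $\mathcal K$-term.
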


The class of functionals $\mathcal{G}_H$ and $\mathcal G$ in \Cref{teo:4} is very general due to the choice of $\mathcal J, \mathcal K$ and $\mathcal I$, which account for \emph{observation at final time $t=T$} and \emph{distributed observation} functionals and (possible) regularization of the initial datum respectively. 
The class of functionals which fits into this description comprehends: 
\begin{itemize}
    \item \textbf{Tracking-type functional with observation at time $t=T$.} Given $u, u_d \!\in \! \L\infty(\reali; \reali)$
    \[
    \mathcal J (u) = \int_0^1 \modulo{u(x) - u_d(x)}^p \dd x, \qquad p \in[1, +\infty)
    \]
    \[
    \mathcal J(u) = \int_\reali \phi(u(x), u_d(x)) \dd x, \qquad \phi\in \Cc0(\reali^2; \reali_{\geq 0})
    \]
    \item \textbf{Smoothed tracking-type functionals with observation at time $t=T$.} Given $\gamma, \phi, \psi$ compactly supported non-negative smooth functions
    \[
    \mathcal J(u) = \int_\reali \gamma(x) \phi\left(\left( \psi * u\right)(x)\right) \dd x
    \]
    \item \textbf{Tracking-type functional with distributed observation.}
    \[
    \mathcal K(u) = \int_0^T \int_\reali \phi(u(t,x) - u_d(t,x)) \dd x \dd t \qquad \phi \in \Cc0(\reali^2; \reali_{\geq 0})
    \]
    \item \textbf{BV-regularization.} For $u_o \in \BV(\reali; \reali)$
    \[
    \mathcal I(u_o)= \tv(u_o).
    \]
\end{itemize}

\begin{proof}
As the space $\mathcal{U}_{ad}$ is endowed with a metric, it satisfies the first axiom of countability and for any sequence $H_n \xrightarrow[n\to\infty]{}0^+$, we are able to sequentially characterize the $\Gamma$-convergence of the functionals $\mathcal G_{H_n}$ to $\mathcal G$. Indeed, by \cite[Proposition 8.1]{zbMATH00735875} $ \mathcal G_{H_n}$ $\Gamma$-converges to $\mathcal G$ if and only if the following are satisfied: 
\begin{enumerate}
[label=$\bm{(\Gamma1)}$]
\item \label{Gamma conv 1} 
\textbf{Lower bound inequality.}
For every $u_o \in \mathcal{U}_{ad}$ and for every sequence $\{u_{o}^n\}_n \subset  \mathcal{U}_{ad}$ such that $u_{o}^n \to u_{o}$, it holds that
\begin{equation*}
    \mathcal G(u_{o}) \leq \liminf_{n \to \infty} \mathcal G_{H_n}(u^n_{o}).
\end{equation*}
\end{enumerate}
\begin{enumerate}
[label=$\bm{(\Gamma2)}$]
\item \label{Gamma conv 2}
\textbf{Upper bound equality.}
For every $u_{o} \in \mathcal{U}_{ad}$ there exists a sequence  $\{u_{o}^n\}_n$ converging to $u_{o}$ in $\mathcal{U}_{ad}$ such that
\begin{equation*}
    \mathcal G(u_{o}) = \lim_{n \to \infty} \mathcal G_{H_n}(u_{o}^n).
\end{equation*}
\end{enumerate}
We develop the proof distinguishing two steps. 
\\
\textbf{Proof to \ref{Gamma conv 1}.}
Set $u_o \in \mathcal{U}_{ad}$ and a sequence $\{u_o^n\}_n$ converging to $u_o$ in $\Lloc1(\reali; \reali)$.
In both cases \ref{case A} and \ref{case B}, one gets that $\{u^{n}\}_n \to u$ in $\C0([0,T];\Lloc1(\reali; \reali))$, where $u^{n}$ is the solution to \eqref{eq:35} and $u$ is the entropy solution to~\eqref{eq:24}.
The continuity properties of $\mathcal J$ and $\mathcal{K}$ and the lower semi-continuity of $\mathcal I$ yields that 
\begin{align}\label{eq:2}
     \mathcal G(u_o) = & \mathcal J(u(T)) + \mathcal K(u) + \mathcal I(u_o) 
     \\
     \nonumber
     \leq & \lim_{n \to + \infty}  \left(\mathcal J(u^{n}(T)) + \mathcal K(u^{n})\right)+ \liminf_{n \to +\infty} \mathcal I(u^{n}_o) = \liminf_{n \to + \infty} \mathcal G_{H_{n}}\left( u^{n}_o \right),
\end{align}
concluding the desidered inequality. 
\\
\textbf{Proof to \ref{Gamma conv 2}.}
Consider $u_o \in \mathcal{U}_{ad}$ and let $u_o^n \coloneqq u_o$ for all $n \in \naturali$. 
Hence, with the notation above, \Cref{teo:7} yields that $u^n \to u$ in $\C0([0,T]; \Lloc1(\reali; \reali))$. Then, the continuity of $\mathcal{J}$ and $\mathcal{K}$ allows to conclude the proof.
\end{proof}
$\Gamma$-convergence provides a powerful framework in the analysis of convergence of minimizers. At first we prove the existence of minimizers in the class $\mathcal{U}_{ad}$ for all functionals $\mathcal{G}_H$ and $\mathcal G$, which descends from the following $\Lloc1$ compactness property of the space $\mathcal{U}_{ad}$ and the lower semi-continuity of the functionals. 

\begin{lemma}[Compactness of $\mathcal{U}_{ad}$]\label{lem:2}
The spaces $\mathcal{U}_{ad}$ defined in~\eqref{eq:17} and~\eqref{eq:32} are compact w.r.t. the $\Lloc1$ topology. 
\end{lemma}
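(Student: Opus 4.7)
The plan is to establish sequential compactness, which is equivalent to compactness here because $\mathcal{U}_{ad}$ is metrizable (it carries the family of $\Lloc1$ semi-norms). Given an arbitrary sequence $\{u_o^n\}_n \subset \mathcal{U}_{ad}$, I would extract a subsequence convergent in $\Lloc1(\reali;\reali)$ to some $u_o$ and verify that the limit still belongs to $\mathcal{U}_{ad}$.

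In Case \ref{case B}, the uniform bounds $\norma{u_o^n}_{\L\infty(\reali;\reali)} \leq u_{max}$ and $\tv(u_o^n) \leq M$ allow a direct application of Helly's compactness theorem (\cite[Theorem 2.3]{Bressan}), yielding a subsequence $u_o^{n_k} \to u_o$ pointwise a.e. and in $\Lloc1$. The two-sided pointwise bounds $0 \leq u_o \leq u_{max}$ transfer to $u_o$ along the a.e. convergent subsequence, and lower semicontinuity of the total variation under $\Lloc1$ convergence gives $\tv(u_o) \leq M$. Thus $u_o \in \mathcal{U}_{ad}$.

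In Case \ref{case A} no global total variation bound is available, and one has to rely on the one-sided Lipschitz control. The key estimate is \Cref{lem:4}, which turns $\Lip^- u_o^n \leq L$ together with the $\L\infty$ bounds into the uniform local estimate
$$\tv(u_o^n;[-R,R]) \leq 4RL + u_{max} - u_{min} \qquad \text{for every } R>0.$$
Applying Helly's theorem on each interval $[-k,k]$ with $k \in \naturali$ and then extracting a diagonal subsequence produces a single sequence $u_o^{n_k}$ that converges in $\Lloc1(\reali;\reali)$ to some $u_o$. Passing to a further a.e. convergent subsequence transfers the pointwise bounds $u_{min} \leq u_o \leq u_{max}$ to the limit; lower semicontinuity of $\Lip^-$ with respect to the $\Lloc1$ topology — the same property already invoked at the end of the proof of \Cref{teo:7} — gives $\Lip^- u_o \leq L$; and lower semicontinuity of $\tv$ on each compact interval, combined again with \Cref{lem:4}, yields the required $\BV$ regularity of $u_o$.

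The main technical point will be Case \ref{case A}: the missing global $\tv$ bound forces the detour through \Cref{lem:4} and the diagonal extraction, and one has to check carefully that every constraint encoded in $\mathcal{U}_{ad}$ — in particular the one-sided Lipschitz bound, which is not a norm-type constraint — is preserved under mere $\Lloc1$ convergence. Case \ref{case B} is, in contrast, an essentially textbook application of Helly's theorem together with the lower semicontinuity of $\tv$.
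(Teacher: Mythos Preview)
Your proposal follows essentially the same route as the paper's proof: Case \ref{case B} is dispatched by a direct application of Helly's theorem and lower semicontinuity of $\tv$, while Case \ref{case A} proceeds by invoking \Cref{lem:4} to obtain the uniform local bound $\tv(u_o^n;[-R,R])\leq 4RL+u_{max}-u_{min}$, extracting a convergent subsequence via Helly, and then checking that the pointwise bounds and the constraints $\Lip^- u_o\leq L$, $u_o\in\BV$ pass to the limit by lower semicontinuity. The only cosmetic difference is that you spell out the diagonal extraction over the intervals $[-k,k]$, whereas the paper applies Helly's theorem in one stroke to obtain pointwise convergence on all of $\reali$; both lead to the same conclusion.
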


\begin{proof}
We will prove the statement in the case $\mathcal{U}_{ad}$ as in~\eqref{eq:17} as the remaining case is a direct consequence of Helly's Theorem.
Consider a sequence $\{u_{o}^n\}_n \subset \mathcal{U}_{ad}$ and observe preliminarily that $\tv(u^n_o)$ is uniformly bounded on all compact sets. In fact, by \Cref{lem:4}, one gets
 \begin{equation}
        \tv (u^n_o; [-R; R]) \leq 4 R\, L  + u_{max} - u_{min}.
    \end{equation}
Owing to Helly's Theorem (see \cite[Theorem 2.3]{Bressan}), there exists $u_o$ such that up to a subsequence
\begin{equation}\label{eq:11}
    \lim_{n \to \infty } u_o^n (x) = u_o(x) \quad \text{for all} \, x \in \reali,
\end{equation}
hence $u_o^{n}\to u_o$ in $\Lloc1(\reali; \reali)$.
We claim that $u_o \in \mathcal{U}_{ad}$: $u_o (x) \in [u_{min}, u_{max}]$ for all $x \in \reali $ due to \eqref{eq:11} and $\Lip^-u_o \leq L, \tv(u_o) < + \infty$ owing to the lower semincontinuity of the maps $f \to \Lip^- f$  and $f \to \tv(f)$ w.r.t. the $\Lloc1(\reali)$ topology. 
\end{proof}

\begin{proposition}[Existence of minimizers]\label{prop:1}
Let $0 < u_{min} \leq u_{max}, \, L, M >0$ and consider $\mathcal J, \mathcal I: \Lloc1(\reali; \reali) \to \reali_{\geq 0}$, $\mathcal K:\Lloc1([0,T]\times \reali; \reali)\to \reali_{\geq 0}$ lower semi-continuous w.r.t. the $\Lloc1$ topology. 
Assume either \ref{case A} or \ref{case B} holds. 
For each $H$ fixed, consider the functionals $\mathcal G_H, \mathcal G$ as in~\eqref{eq:1} and~\eqref{eq:19} defined on the space $\mathcal U_{ad}$. Then, there exist minimizers to the control problems
$$ \min_{u_o \in \mathcal{U}_{ad}} \mathcal G_H(u_o), \quad \min_{u_o \in \mathcal{U}_{ad}} \mathcal G(u_o).
$$
\end{proposition}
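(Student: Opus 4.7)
The plan is the classical direct method of the calculus of variations, relying on the two ingredients already at hand: compactness of $\mathcal{U}_{ad}$ and continuous dependence of the state on the initial datum. Fix $H>0$ and take a minimizing sequence $\{u_o^k\}_k \subset \mathcal{U}_{ad}$ for $\mathcal{G}_H$; such a sequence exists because $\mathcal{G}_H \geq 0$ by the assumed non-negativity of $\mathcal{J}, \mathcal{K}, \mathcal{I}$, so the infimum is finite. By the $\Lloc1$-compactness of $\mathcal{U}_{ad}$ established in \Cref{lem:2}, up to a subsequence (not relabeled) there exists $u_o^* \in \mathcal{U}_{ad}$ with $u_o^k \to u_o^*$ in $\Lloc1(\reali; \reali)$.

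Next, I would upgrade this to convergence of the state. The continuous dependence statement in \Cref{teo:1} gives that the corresponding solutions $u_H^k$ to~\eqref{eq:23} satisfy $u_H^k(t) \to u_H^*(t)$ in $\Lloc1(\reali; \reali)$ for every $t \in [0,T]$, where $u_H^*$ denotes the solution with initial datum $u_o^*$. Combining this pointwise-in-time convergence with the uniform bound $0 \leq u_H^k \leq u_{max}$ from the maximum principle (which provides a uniform $\L1([0,T]\times[-R,R])$ dominating function on every compact set), the dominated convergence theorem yields $u_H^k \to u_H^*$ in $\Lloc1([0,T] \times \reali; \reali)$, and in particular $u_H^k(T) \to u_H^*(T)$ in $\Lloc1(\reali; \reali)$.

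The hypothesized $\Lloc1$ lower semi-continuity of $\mathcal{J}$, $\mathcal{K}$ and $\mathcal{I}$ then delivers
\[
\mathcal{G}_H(u_o^*) \leq \liminf_{k \to \infty} \left( \mathcal{J}(u_H^k(T)) + \mathcal{K}(u_H^k) + \mathcal{I}(u_o^k) \right) = \inf_{u_o \in \mathcal{U}_{ad}} \mathcal{G}_H(u_o),
\]
so $u_o^*$ is a minimizer of $\mathcal{G}_H$ over $\mathcal{U}_{ad}$. The argument for $\mathcal{G}$ is identical in structure, the role of \Cref{teo:1} being now played by the local $\L1$ stability estimate for Kru\v{z}kov solutions in \Cref{teo:5} (continuity in $\Lloc1$ being recovered by exhausting $\reali$ with larger and larger intervals to absorb the finite speed-of-propagation factor $\sigma$), together with the maximum principle for \eqref{eq:24}. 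There is no real obstacle here: once compactness of the admissible set and continuous dependence of the state are in place, everything reduces to a routine lower semi-continuity argument, and the main point to verify cleanly is simply that the pointwise-in-time continuity of the semigroups extends, via the uniform $\L\infty$ bound, to the $\Lloc1([0,T]\times\reali;\reali)$ topology required by $\mathcal{K}$.
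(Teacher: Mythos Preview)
Your proof is correct and follows essentially the same direct-method argument as the paper: minimizing sequence, compactness of $\mathcal{U}_{ad}$ via \Cref{lem:2}, continuous dependence of the state via \Cref{teo:1} (resp.\ \Cref{teo:5}), and lower semi-continuity of $\mathcal{J},\mathcal{K},\mathcal{I}$. The only difference is that you spell out, via dominated convergence and the uniform $\L\infty$ bound, how the pointwise-in-$t$ convergence in $\Lloc1(\reali)$ upgrades to the $\Lloc1([0,T]\times\reali)$ convergence needed for $\mathcal{K}$, a step the paper leaves implicit.
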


\begin{proof}
    We start by proving the existence of minimizers of $\mathcal{G}_H$ for fixed $H > 0$. 
    Due to the positiveness of the functionals involved, we can consider a minimizing sequence $\{u_o^n\}_n \subset \mathcal{U}_{ad}$, which, by the compactness property in \Cref{lem:2}, converges up to a subsequence to $u_o \in \mathcal{U}_{ad}$. 
    Call for all $n \in \naturali$ $u^n, u \in \C0([0,T]; \Lloc1(\reali; \reali))$ the solutions to~\eqref{eq:23} with initial datum $u^n_o$ and $u_o$ respectively. 
    Hence, \Cref{teo:1} yields that  $u^n(t) \to u(t)$ in $\Lloc1(\reali;\reali)$ for all $t \in [0,T]$ and, using the lower semi-continuity of $\mathcal J, \mathcal K,\mathcal I$, one gets
\begin{equation}
    \mathcal{G}_{H}(u_o) \leq \liminf_{n \to \infty} \mathcal G_H (u_o^n) = \inf_{f \in \mathcal{U}_{ad}}\mathcal G_H, 
\end{equation}
proving that $u_o \in \mathcal{U}_{ad}$ is a minimizer to $\mathcal{G}_H$. 
Similarly for the local counterpart, the functional $\mathcal{G}$ admits a minimizer in $\mathcal U_{ad}$ due to the $\Lloc1$-continuity on the initial datum of entropy solutions to~\eqref{eq:24} reported in \Cref{teo:5}.
\end{proof}

Once the existence of minimizers is proven, the $\Gamma$-convergence of functionals asserts that accumulation points of the sequence of minimizers are actually minimizers for the $\Gamma$-limit functional. More precisely, we recall the following theorem from \cite{zbMATH00735875}. 

\begin{theorem}[{\cite[Corollary 7.20]{zbMATH00735875}}]\label{teo:2}
    Consider a sequence $\{H_n\}_n \to 0$ and assume that $ \left\{\mathcal G_{H_n}\right\}_n$ $\Gamma$-converges to $\mathcal G$ in a topological space $X$. Let for every $n \in \naturali$ $u_o^n$ be a minimizer of $\mathcal G_{H_n}$ in $X$. If $u_o$ is a cluster point of $\{u_o^n\}_n$, i.e. if there exists a subsequence $\{u^{n_k}_o\}_k $ convergent to $u_o$ in $X$, then $u_o$ is a minimizer of $\mathcal G$ in $X$ and 
    \[
    \mathcal G(u_o) = \limsup_{n \to \infty}  \mathcal G_{H_n} (u_o^n).
    \]
    If $\{u_o^n\}_n$ converges to $u_o \in X$, then $u_o$ is a minimizer of $\mathcal G$ in $X$ and
    \[
        \mathcal G(u_o) = \lim_{n \to \infty} \mathcal G_{H_n}(u_o^n).
    \]
\end{theorem}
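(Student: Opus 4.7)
The plan is to combine directly the two defining ingredients of $\Gamma$-convergence --- the $\liminf$ inequality along converging sequences and the existence of a recovery sequence for every competitor --- with the minimality assumption on each $u_o^n$. First, applying the $\liminf$ inequality along the convergent subsequence $u_o^{n_k}\to u_o$ immediately yields
\[
\mathcal{G}(u_o) \leq \liminf_{k \to \infty} \mathcal{G}_{H_{n_k}}(u_o^{n_k}).
\]
Next, for an arbitrary competitor $v \in X$, the upper bound property produces a recovery sequence $v_n \to v$ with $\mathcal{G}_{H_n}(v_n) \to \mathcal{G}(v)$. Since $u_o^n$ minimizes $\mathcal{G}_{H_n}$, one has $\mathcal{G}_{H_n}(u_o^n) \leq \mathcal{G}_{H_n}(v_n)$, so passing to the $\limsup$ in $n$,
\[
\limsup_{n\to \infty} \mathcal{G}_{H_n}(u_o^n) \leq \lim_{n\to \infty} \mathcal{G}_{H_n}(v_n) = \mathcal{G}(v).
\]

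The decisive step is then to chain the two estimates along the subsequence $\{n_k\}$: for every $v \in X$,
\[
\mathcal{G}(u_o) \leq \liminf_{k \to \infty} \mathcal{G}_{H_{n_k}}(u_o^{n_k}) \leq \limsup_{k\to \infty} \mathcal{G}_{H_{n_k}}(u_o^{n_k}) \leq \limsup_{n\to \infty} \mathcal{G}_{H_n}(u_o^n) \leq \mathcal{G}(v).
\]
This shows that $u_o$ minimizes $\mathcal{G}$ on $X$; specializing $v = u_o$ collapses the whole chain to equalities, so in particular $\mathcal{G}(u_o) = \limsup_{n} \mathcal{G}_{H_n}(u_o^n)$.

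For the second assertion, convergence of the entire sequence $u_o^n \to u_o$ allows me to apply the $\liminf$ inequality directly to the full sequence rather than to a subsequence, so that $\mathcal{G}(u_o) \leq \liminf_n \mathcal{G}_{H_n}(u_o^n)$. Combined with the estimate $\limsup_n \mathcal{G}_{H_n}(u_o^n) \leq \mathcal{G}(u_o)$ obtained above with $v = u_o$, this pinches $\liminf$ and $\limsup$ together and upgrades the limsup equality to a genuine limit $\mathcal{G}(u_o) = \lim_n \mathcal{G}_{H_n}(u_o^n)$. Being a purely abstract fact within $\Gamma$-convergence theory, no real technical obstacle arises; the only subtle point is that the recovery sequence must be built for the arbitrary competitor $v$ rather than for $u_o$, so that the minimality of each $u_o^n$ can be leveraged to produce the crucial upper estimate $\limsup_n \mathcal{G}_{H_n}(u_o^n) \leq \mathcal{G}(v)$.
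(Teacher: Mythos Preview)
The paper does not supply its own proof of this statement: Theorem~\ref{teo:2} is quoted verbatim as \cite[Corollary 7.20]{zbMATH00735875} and then applied as a black box. Your argument is the standard two--line proof from $\Gamma$-convergence theory (liminf inequality along the convergent subsequence plus a recovery sequence for an arbitrary competitor, then chaining and specializing $v=u_o$), and it is correct.

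One small caveat worth flagging: the statement is phrased for a general ``topological space $X$'', whereas your proof uses the \emph{sequential} form of $\Gamma$-convergence (the $\liminf$ inequality along sequences and recovery sequences). These coincide with the topological definition only under first countability (cf.~\cite[Proposition~8.1]{zbMATH00735875}, which the paper itself invokes in the proof of Theorem~\ref{teo:4}). Since every application in the paper takes place in the metric space $\mathcal{U}_{ad}\subset\Lloc1$, this is harmless here, but in full generality one would need to run the argument through neighborhoods rather than sequences, or simply add a first-countability hypothesis.
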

Then, an application of the previous theorem straightforwardly implies the main result of this section. 
\begin{theorem}[Convergence of minimizers]\label{teo:6}
 Let $0 <u_{min} \leq u_{max}$, $L, M>0 $, $H_n \to 0^+$ and assume either \ref{case A} or \ref{case B} holds. Consider the functionals $\left\{ \mathcal G_{H_n}\right\}_{H_n}$, $\mathcal G$ as in \Cref{teo:4} and let $u_{o}^n$ be a minimizer of $\mathcal{G}_{H_n}$ in $\mathcal{U}_{ad}$ for all $n \in \naturali$. Then, up to a subsequence, $u_o^n \to u_o$ where $u_o$ is a minimizer of $\mathcal G$ in $\mathcal{U}_{ad}$. 
 Moreover, \[
    \mathcal G(u_o) = \limsup_{n \to \infty}  \mathcal G_{H_n} (u_o^n).
    \]
\end{theorem}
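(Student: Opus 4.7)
The plan is to combine the three main ingredients already collected in this section: the existence of minimizers (\Cref{prop:1}), the sequential compactness of the admissible set (\Cref{lem:2}), and the abstract convergence-of-minimizers result for $\Gamma$-convergent sequences (\Cref{teo:2}) applied to the $\Gamma$-convergence established in \Cref{teo:4}. No further analytical input is needed beyond these four building blocks, so the proof reduces to assembling them in the correct order.

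First, I would invoke \Cref{prop:1} to guarantee that for each $n \in \naturali$ the nonlocal problem admits at least one minimizer $u_{o}^n \in \mathcal{U}_{ad}$, so the sequence in the statement is well-defined. Next, because $\mathcal{U}_{ad}$ is compact with respect to the $\Lloc1$ topology by \Cref{lem:2} (in both frameworks \ref{case A} and \ref{case B}), I would extract a (not relabelled) subsequence $\{u_{o}^{n_k}\}_k$ converging in $\Lloc1(\reali;\reali)$ to some $u_o \in \mathcal{U}_{ad}$. This is the cluster point needed to feed into the abstract theorem.

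At this point I would apply \Cref{teo:2}. Its hypotheses are met: the metric space $X = \mathcal{U}_{ad}$ endowed with the $\Lloc1$ topology supports the $\Gamma$-convergence $\mathcal{G}_{H_n} \xrightarrow{\Gamma} \mathcal{G}$ proved in \Cref{teo:4}; the $u_o^n$ are minimizers of $\mathcal{G}_{H_n}$ by construction; and the convergent subsequence $u_o^{n_k} \to u_o$ provides the required cluster point. The conclusion of \Cref{teo:2} then yields directly that $u_o$ is a minimizer of $\mathcal{G}$ in $\mathcal{U}_{ad}$ together with the identity
\begin{equation*}
\mathcal{G}(u_o) = \limsup_{n \to \infty} \mathcal{G}_{H_n}(u_o^n),
\end{equation*}
which is the statement of the theorem.

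There is, honestly, no real obstacle here: the difficult analytical work has already been done, the singular-limit compactness being packaged into \Cref{teo:7} and promoted to $\Gamma$-convergence in \Cref{teo:4}, while the admissible class was designed precisely so that \Cref{lem:2} holds. The only point one must be mindful of is that the extracted subsequence genuinely lives in $\mathcal{U}_{ad}$ after passage to the limit; this is ensured by the closedness of the constraints $\norma{\cdot}_{\L\infty}\leq u_{max}$, $\inf \geq u_{min}$ (resp. $\inf \geq 0$), $\Lip^- \leq L$ (resp. $\tv \leq M$) under $\Lloc1$-convergence, exactly as already used inside the proof of \Cref{lem:2}.
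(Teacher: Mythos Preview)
Your proposal is correct and follows exactly the paper's own route: the paper's proof is a one-line application of the compactness of $\mathcal{U}_{ad}$ (\Cref{lem:2}) together with \Cref{teo:2}, and your argument just spells this out in slightly more detail, adding the (harmless) explicit reference to \Cref{prop:1} and \Cref{teo:4}.
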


\begin{proof}
    The proof immediately follows from the compactness property of $\mathcal{U}_{ad}$ in $\Lloc1(\reali; \reali)$ and \Cref{teo:2}. 
\end{proof}

\section{Discrete Control Problem in the \emph{nonlocal-to-local} limit}
\subsection{The Eulerian-Lagrangian scheme}
The present section is devoted to the description of the Eulerian-Lagrangian scheme to numerically solve~\eqref{eq:23} and~\eqref{eq:24}. 
The scheme, which we recall for clarity of exposition, was first introduced for (local) balance laws in \cite{abreu_loc} and extended to the nonlocal framework in \cite{abreu_nonloc}.
We summarize here the construction of the scheme to solve the Cauchy problem for the local conservation law~\eqref{eq:24}.

The core of the scheme is to trace the evolution in time of the average of the solution $u$ on spatial cells whose boundaries are delimited by \textit{no-flux} curves, i.e. curves along which the flux vanishes. 
Consider a uniform spatial grid with mesh size $\Delta x > 0 $ defined by the cell midpoints $x_j \coloneqq j \Delta x$, $j \in \mathbb{Z}$. Similarly, consider the temporal grid $t^m \coloneqq m \Delta t$ for $m\in \naturali$ with temporal step $\Delta t> 0 $ to be decided according to a suitable CFL condition. 

For all $j,m$ call $u_j^m$ the spatial average at time $t^m$ of the solution $u$ over the cell $C_j \coloneqq \left [ x_{j-\frac{1}{2}}, x_{j+\frac{1}{2}}\right]$ where $x_{j+\frac{1}{2}}\coloneqq (j+\frac{1}{2})\Delta x$. Namely, 
$$
u_j^m \coloneqq \frac{1}{\Delta x} \int_{x_{j-\frac{1}{2}}}^{x_{j+\frac{1}{2}}} u(t^m, x) \dd{x}.
$$
We call $\sigma_j^m$ the \textit{no-flux} curve arising from the grid point $x_j$ at time $t^m$ and characterize it as the solution to an ODE to be determined below.
Introduce the region $D_j^m$ delimited by the \textit{no-flux} curves $\sigma_j^m$ and $\sigma_{j+1}^m$, i.e.
\begin{equation}\label{eq:49}
  D_j^m \coloneqq \left\{ (t,x) \in \reali_+ \times \reali \, : \, \sigma_j^m(t) \leq x \leq  \sigma_{j+1}^m(t), \, t \in [t^m, t^{m+1}]
\right\}.  
\end{equation}
The nomenclature \textit{no-flux} curve is evidently motivated by the following computations.
First, integrate~\eqref{eq:24} on the region $D_j^m$
\begin{align*}
0 =
    \int_{D_j^m} \left( \partial_t u(t,x) + \partial_x \left( u(t,x) \, v(u(t,x))\right)\right) \dd{x} \dd t
    =
    \int_{D_j^m} \nabla_{x,t} \cdot 
    \left[
    \begin{array}{cc}
        u(t,x) \, v(u(t,x))   \\
         u(t,x)
    \end{array} \right] \dd{x} \dd t
\end{align*}
and apply the Divergence Theorem to conclude 
\begin{align}\label{eq:14}
0 = &\int_{\partial D_j^m} \left[
    \begin{array}{cc}
        u \, v(u)   \\
         u
    \end{array} \right]\cdot \bm{n} \dd{\partial D_j^m}
\end{align}
where $\partial D_j^m$ denotes the boundary of the integration region and $\bm{n}$ the external normal vector evaluated at the points of the boundary.
The \textit{no-flux} curves are defined in such a way that the line integral over $\sigma_j^m$ and $\sigma_{j+1}^m$ is actually zero.
To this aim, let $\tau$ be the parameter describing the curve $\tau \to ( \sigma_j^m(\tau), t(\tau))$ and note that the integral~\eqref{eq:14} over such line vanishes if the tangent vector to the curve is parallel to the vector field 
$\left[\begin{array}{cc}
        u \, v(u)   \\
         u
\end{array}
\right]$
along  the curve, i.e. if there exists $\lambda \in \reali,\, \lambda \neq 0 $ such that 
\begin{equation}\label{eq:12}
\begin{cases}
  \frac{\dd \sigma_j^m}{\dd \tau} (\tau)& = \lambda \,  u\left(t(\tau), \sigma_j^m(\tau )\right) \, v\left( u\left(t(\tau), \sigma_j^m(\tau)\right)\right), \\
  \frac{\dd t} {\dd \tau}(\tau) & = \lambda\,  u\left(t(\tau), \sigma_j^m(\tau )\right).
\end{cases}
\end{equation}
Eliminating $\tau$ in the family of solutions $(\sigma_j^m(\tau), t(\tau))$ to~\eqref{eq:12} leads to the following ODE describing the trajectory of $\sigma_j^m$: 
\begin{equation}\label{eq:13}
\begin{cases}
    \frac{\dd \sigma_j^m}{\dd t}(t) &=  v 
    \left(
    u\left(t, \sigma_j^m(t)\right)\right) \qquad t \in[t^m, t^{m+1}],  \\
    \sigma_j^m(t^m) &= x_j.
\end{cases}
\end{equation}
Hence, motivated by the no-flux property along the \textit{no-flux} curves described above, we prescribe that each $\sigma_j^m$ satisfies~\eqref{eq:13}.
Then,~\eqref{eq:14} reduces to
\begin{align}\label{eq:16}
0 = &\int_{\sigma_{j}^m(t^{m+1})}^{\sigma_{j+1}^m(t^{m+1})} u(t^{m+1}, x) \dd{x}   - \int_{x_{j}}^{x_{j+1}} u(t^m, x) \dd{x}.
\end{align}
Making use of the moving regions $D^m_j$ "impermeable" to the flux, we are able to follow the evolution of the spatial average of $u$ on such domains. In particular, we introduce the spatial average of the solution $u$ at time $t^{m+1}$ on the interval $[\sigma_{j}^m(t^{m+1}), \sigma_{j+1}^m(t^{m+1})]$:

\begin{flalign*}
&&
u_{j+\tfrac{1}{2}}^{m+1}
\coloneqq &
\frac{1}{\sigma_{j+1}^m(t^{m+1}) - \sigma_{j}^m(t^{m+1})}
\int_{\sigma_{j}^m(t^{m+1})}^{\sigma_{j+1}^m(t^{m+1})} 
u\bigl(t^{m+1}, x\bigr)\,\mathrm{d}x \\
&&
\quad= &
\frac{1}{\sigma_{j+1}^m(t^{m+1}) - \sigma_{j}^m(t^{m+1})}
\int_{x_j}^{x_{j+1}} u\bigl(t^m, x\bigr)\,\mathrm{d}x
&&& \text{[by~\eqref{eq:16}]}
\end{flalign*}
From now on, we refer to $U$ as the discrete piecewise constant approximation of the real solution $u$ whose cell averages $U^m_j$ are prescribed by the scheme.
Explicitly, we have 
\begin{equation}\label{eq:40}
  U(t,x) = \sum_{j,m} U^m_j \chi_{C_j}(x) \chi_{[t^m, t^{m+1}]}(t), \qquad U^m(x) \coloneqq U(t^m, x).  
\end{equation}
The first step towards the construction of the numerical scheme is the adequate discretization of the \textit{no-flux} curves between timing $[t^m, t^{m+1}]$. To this aim, consider
\begin{equation}\label{eq:50}
    V_j^m \coloneqq v \left( U^m_j
\right) \sim v\left( u(t^m, x_j) \right)
\end{equation}
and let $\bar x^{m+1}_j$ be the approximation of the displacement of $x_j$ along the curve $\sigma_j^m$ at the time $t^{m+1}$:
\begin{equation}\label{eq:39}
    \bar x_j^{m+1} \coloneqq x_j + \Delta t \, V^m_j \sim \sigma_j^{m}(t^{m+1}).
\end{equation}
To establish the well-posedness of the scheme, the time interval $\Delta t$ is chosen upon a CFL-type condition which prevents the interaction between  \textit{no-flux} curves.
Indeed, the choice 
\begin{equation}\label{eq:20}
\modulo{V_j^m} \leq \frac{1}{8} \frac{\Delta x}{\Delta t}
\end{equation}
immediately implies $
x_{j-\frac{1}{2}} < \bar{x}_j^{m+1} < x_{j + \frac{1}{2}}$.
\begin{remark}\label{remark:1}
    Note that the CFL condition above does not rely on the evaluation of the derivative of the flux. Moreover, we can make the choice $\Delta t = \frac{1}{8}\norma{v}^{-1}_{\L\infty([0, u_{max}]; \reali)}\Delta x$.
\end{remark}

\begin{figure}
    \centering
 \begin{tikzpicture}[xscale=1.1, yscale=0.6]
%Spatial line at time t^m
  \node at (-1.5, 0) {$t^m$};
  \node[above] at (11, 0) {$x$};
  \draw[->](0,0)--(11,0);
%spatial nodes

\draw (1,-0.1) -- (1, 0.1);
\draw (2,-0.1) -- (2, 0.1);   \node[below] at (2, 0) {$x_{j-\frac{3}{2}}$};
\draw (3,-0.1) -- (3, 0.1);   \node[below] at (3, 0) {$x_{j-1}$};
\draw (4,-0.1) -- (4, 0.1);   \node[below] at (4, 0) {$x_{j-\frac{1}{2}}$};
\draw (5,-0.1) -- (5, 0.1);   \node[below] at (5, 0) {$x_j$};
\draw (6,-0.1) -- (6, 0.1);   \node[below] at (6, 0) {$x_{j+\frac{1}{2}}$};
\draw (7,-0.1) -- (7, 0.1);   \node[below] at (7, 0) {$x_{j+1}$};
\draw (8,-0.1) -- (8, 0.1);   \node[below] at (8, 0) {$x_{j+\frac{3}{2}}$};
\draw (9,-0.1) -- (9, 0.1);
\draw (10,-0.1) -- (10, 0.1);

%Spatial line at time t^{m+1}
  \node at (-1.5, 4) {$t^{m+1}$};
  \node[above] at (11, 4) {$x$};
  \draw[->](0,4)--(11,4);
%spatial nodes

\draw (1,3.9) -- (1, 4.1);
\draw (2,3.9) -- (2, 4.1);   \node[below] at (2, 4) {$x_{j-\frac{3}{2}}$};
\draw (3,3.9) -- (3, 4.1);   \node[below] at (3, 4) {$x_{j-1}$};
\draw (4,3.9) -- (4, 4.1);   \node[below] at (4, 4) {$x_{j-\frac{1}{2}}$};
\draw (5,3.9) -- (5, 4.1);   \node[below] at (5, 4) {$x_j$};
\draw (6,3.9) -- (6, 4.1);   \node[below] at (6, 4) {$x_{j+\frac{1}{2}}$};
\draw (7,3.9) -- (7, 4.1);   \node[below] at (7, 4) {$x_{j+1}$};
\draw (8,3.9) -- (8, 4.1);   \node[below] at (8, 4) {$x_{j+\frac{3}{2}}$};
\draw (9,3.9) -- (9, 4.1);
\draw (10,3.9) -- (10, 4.1);

%% no-flux curves
\draw[dashed, magenta] (5,0) -- (5.7, 4);
\draw[snake it, cyan] (5.07,0.3) -- (5.3,4);
\draw[cyan] (5.0,0) -- (5.07,0.3);

\draw[magenta] (5.7,3.9) -- (5.7, 4.1);   \node[above] at (6, 4) {$\color{magenta}\bar x_{j}^{m+1}$};
\draw[cyan] (5.3,3.9) -- (5.3, 4.1);   \node[above] at (4.6, 4) {$\color{cyan}{\sigma_j^m(t^{m+1})}$};

\draw[dashed, magenta] (3,0) -- (2.2, 4);
\draw[snake it, cyan] (2.96,0.3) -- (2.6,4);
\draw[cyan] (3,0) -- (2.96, 0.3);

\draw[magenta] (2.2,3.9) -- (2.2, 4.1);   \node[above] at (2.2, 4) {$\color{magenta}\bar x_{j-1}^{m+1}$};

\draw[dashed, magenta] (7,0) -- (6.7, 4);
\draw[snake it, cyan] (6.95,0.3) -- (7.4,4);
\draw[cyan] (7,0) -- (6.95, 0.3);

\draw[magenta] (6.7,3.9) -- (6.7, 4.1);   \node[above] at (7, 4) {$\color{magenta}\bar x_{j+1}^{m+1}$};

\node[above] at (6, 1.7) {$\color{cyan}D_j^m$};
\node[above] at (4, 1.7) {$\color{cyan}D_{j-1}^m$};

 \end{tikzpicture}
 \caption{Scheme of the moving regions $D_j^m$ "impermeable" to the flux defined in~\eqref{eq:49}. In cyan, we plotted the \emph{no-flux} curves described by~\eqref{eq:13}; in magenta we add the linear approximation of the curves (see~\eqref{eq:50} and~\eqref{eq:39}).
}
\end{figure}
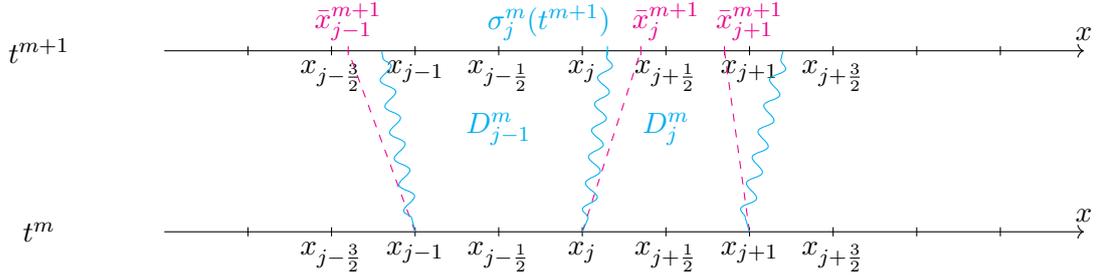
We are now ready to describe the scheme defining the approximate solution $U$ by following its spatial average on the moving region which mimic $D_j^m$. Indeed, we define  
\begin{flalign*}
&& h_{j}^{m+1} \coloneqq \bar x_{j+1}^{m+1} - \bar x_{j}^{m+1} = \Delta x + \Delta t \left( V^m_{j+1} - V^m_j\right) &&& \text{[by~\eqref{eq:39}]} 
\end{flalign*}
and claim that 
\begin{align}
\nonumber
    u_{j+\frac{1}{2}}^{m+1}
    \sim
    &
    \frac{1}{h^{m+1}_j} \int_{ x_j}^{ x_{j+1}} U(t^m, x) \dd x
    \\
    \nonumber
    =
    &
    \frac{1}{h^{m+1}_j}
    \left[
    \int_{ x_j}^{ x_{j+\frac{1}{2}}} U(t^m, x) \dd x + \int_{ x_{j+\frac{1}{2}}}^{ x_{j+1}} U(t^m, x) \dd x
    \right]
    \\
    \label{eq:26}
    =
    &
    \frac{1}{h^{m+1}_j} \frac{\Delta x}{2}
    \left(
    U_j^m  + 
    U_{j+1}^m  
    \right) \eqqcolon U^{m+1}_{j+\frac{1}{2}}.
\end{align}
Consequently, one obtains
\begin{align*}
\frac{1}{\Delta x} \int_{x_{j-\frac{1}{2}}}^{x_{j+\frac{1}{2}}} U(t^{m+1}, x) \dd x
=
&
\frac{1}{\Delta x} 
\left[\int_{x_{j-\frac{1}{2}}}^{\bar x_j^{m+1}} U(t^{m+1}, x) \dd x
+
\int_{\bar x_j^{m+1}}^{x_{j+\frac{1}{2}}} U(t^{m+1}, x) \dd x
\right]
\\
\sim
&
\frac{1}{\Delta x} 
\left[\int_{x_{j-\frac{1}{2}}}^{\bar x_j^{m+1}} U^{m+1}_{j-\frac{1}{2}} \dd x
+
\int_{\bar x_j^{m+1}}^{x_{j+\frac{1}{2}}} U^{m+1}_{j+\frac{1}{2}} \dd x
\right]
\\
=
&
\frac{1}{\Delta x} 
\left[ U^{m+1}_{j-\frac{1}{2}} \left(
\frac{\Delta x}{2} + \Delta t V_j^m
\right)
+
U^{m+1}_{j+\frac{1}{2}} \left(
\frac{\Delta x}{2} - \Delta t V_j^m
\right)
\right].
\end{align*}

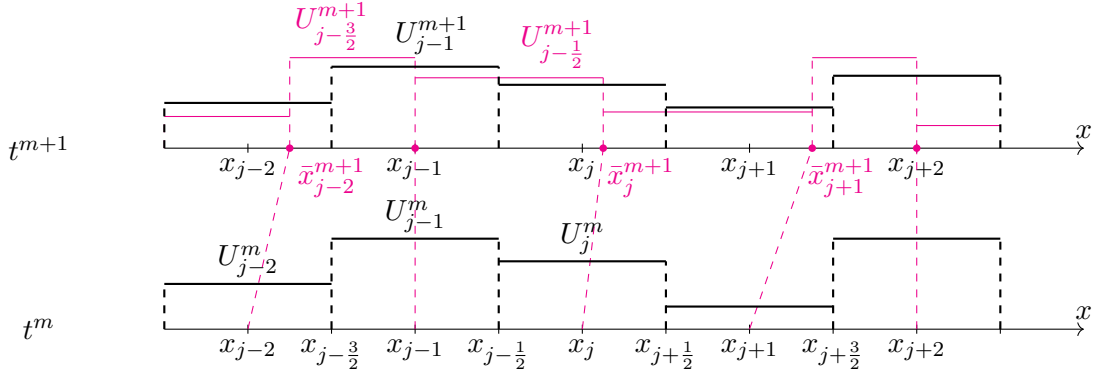
\begin{figure}
    \centering
 \begin{tikzpicture}[xscale=1.1, yscale=0.6]
%Spatial line at time t^m
  \node at (-1.5, 0) {$t^m$};
  \node[above] at (11, 0) {$x$};
  \draw[->](0,0)--(11,0);
%spatial nodes

\draw (1,-0.1) -- (1, 0.1);   \node[below] at (1,0) {$x_{j-2}$};
\draw (2,-0.1) -- (2, 0.1);   \node[below] at (2, 0) {$x_{j-\frac{3}{2}}$};
\draw (3,-0.1) -- (3, 0.1);   \node[below] at (3, 0) {$x_{j-1}$};
\draw (4,-0.1) -- (4, 0.1);   \node[below] at (4, 0) {$x_{j-\frac{1}{2}}$};
\draw (5,-0.1) -- (5, 0.1);   \node[below] at (5, 0) {$x_j$};
\draw (6,-0.1) -- (6, 0.1);   \node[below] at (6, 0) {$x_{j+\frac{1}{2}}$};
\draw (7,-0.1) -- (7, 0.1);   \node[below] at (7, 0) {$x_{j+1}$};
\draw (8,-0.1) -- (8, 0.1);   \node[below] at (8, 0) {$x_{j+\frac{3}{2}}$};
\draw (9,-0.1) -- (9, 0.1);   \node[below] at (9, 0) {$x_{j+2}$};
\draw (10,-0.1) -- (10, 0.1);

%Spatial line at time t^{m+1}
  \node at (-1.5, 4) {$t^{m+1}$};
  \node[above] at (11, 4) {$x$};
  \draw[->](0,4)--(11,4);
%spatial nodes
\draw (1,3.9) -- (1, 4.1);   \node[below] at (1, 4) {$x_{j-2}$};

\draw (3,3.9) -- (3, 4.1);   \node[below] at (3, 4) {$x_{j-1}$};

\draw (5,3.9) -- (5, 4.1);   \node[below] at (5, 4) {$x_j$};

\draw (7,3.9) -- (7, 4.1);   \node[below] at (7, 4) {$x_{j+1}$};

\draw (9,3.9) -- (9, 4.1);   \node[below] at (9, 4) {$x_{j+2}$};

%% no-flux curves
\draw[dashed, magenta] (1,0) -- (1.5,4);

\node[circle, fill, inner sep=1pt, magenta] at (1.5,4){}; 
\node[below] at (2, 4) {\color{magenta}$\bar x^{m+1}_{j-2}$};

\draw[dashed, magenta] (3,0) -- (3, 4);

\node[circle, fill, inner sep=1pt, magenta] at (3,4){}; 

\draw[dashed, magenta] (5,0) -- (5.25, 4);

\node[circle, fill, inner sep=1pt, magenta] at (5.25,4){}; 
\node[below] at (5.7, 4) {\color{magenta}$\bar x^{m+1}_{j}$};

\draw[dashed, magenta] (7,0) -- (7.75, 4);

\node[circle, fill, inner sep=1pt, magenta] at (7.75,4){}; 
\node[below] at (8.1, 4) {\color{magenta}$\bar x^{m+1}_{j+1}$};

\draw[dashed, magenta] (9,0) -- (9, 4);
\node[circle, fill, inner sep=1pt, magenta] at (9,4){};

%Piecewise function at t^m
\draw[dashed,thick] (0,0) -- (0,1);
\draw[thick] (0,1) -- (2,1);
\draw[dashed,thick] (2,0) -- (2,2);
\draw[thick] (2,2) -- (4,2);
\draw[dashed,thick] (4,0) -- (4,2);
\draw[thick] (4,1.5) -- (6,1.5);
\draw[dashed,thick] (6,0) -- (6,1.5);
\draw[thick] (6,0.5) -- (8,0.5);
\draw[dashed,thick] (8,0) -- (8,2);
\draw[thick] (8,2) -- (10,2);
\draw[dashed,thick] (10,0) -- (10,2);

\node at (1, 1.5) {$U^{m}_{j-2}$};
\node at (3, 2.5) {$U^{m}_{j-1}$};
\node at (5, 2) {$U^{m}_{j}$};

\node at (2.0, 6.7) {\color{magenta}$U^{m+1}_{j-\frac{3}{2}}$};
\node at (4.7, 6.3) {\color{magenta}$U^{m+1}_{j-\frac{1}{2}}$};

\node at (3.2, 6.5) {$U^{m+1}_{j-1}$};

%Values U^{m+1}_{j+0.5}
\draw[dashed,magenta] (0,4) -- (0,4.7);
\draw[magenta] (0,4.7) -- (1.5,4.7);
\draw[dashed, magenta] (1.5,4) -- (1.5,6);
\draw[magenta] (1.5,6) -- (3,6);
\draw[dashed, magenta] (3,4) -- (3,6);
\draw[magenta] (3,14/9+4) -- (5.25,14/9+4);
\draw[dashed,magenta] (5.25,4) -- (5.25,14/9+4);
\draw[magenta] (5.25,4.8) -- (7.75,4.8);
\draw[dashed,magenta] (7.75,4) -- (7.75,6);
\draw[magenta] (7.75,6) -- (9,6);
\draw[dashed, magenta] (9,4) -- (9,6);
\draw[magenta] (9,4.5) -- (10,4.5);

%Piecewise function at t^{m+1}
\draw[dashed,thick] (0,4) -- (0,5);
\draw[thick] (0,5) -- (2,5);
\draw[dashed,thick] (2,4) -- (2,5.8);
\draw[thick] (2,5.8) -- (4,5.8);
\draw[dashed,thick] (4,4) -- (4,5.8);
\draw[thick] (4,5.4) -- (6,5.4);
\draw[dashed,thick] (6,4) -- (6,5.4);
\draw[thick] (6,4.9) -- (8,4.9);
\draw[dashed,thick] (8,4) -- (8,5.6);
\draw[thick] (8,5.6) -- (10,5.6);
\draw[dashed,thick] (10,4) -- (10,5.6);

 \end{tikzpicture}
 \caption{Representation of the Eulerian-Lagrangian scheme~\eqref{eq:21}. In black we plot the solution $U$ at times $t^m$ and $t^{m+1}$; in magenta dashed we report the approximation of \emph{no-flux} curves and the values $U^{m+1}_{j+\frac{1}{2}}$ defined in~\eqref{eq:26}. 
}
\end{figure}
Hence, we are motivated to set the following numerical scheme
\[
U^{m+1}_j = \frac{1}{\Delta x} 
\left[ U^{m+1}_{j-\frac{1}{2}} \left(
\frac{\Delta x}{2} + \Delta t V_j^m
\right)
+
U^{m+1}_{j+\frac{1}{2}} \left(
\frac{\Delta x}{2} - \Delta t V_j^m
\right)
\right]
\]
which, substituting the values $U_{j+\frac{1}{2}}^{m+1}$ obtained in~\eqref{eq:26} and 
\[
\frac{\Delta x}{2} + \Delta t V_j^m = \frac{1}{2} \left(h^{m+1}_{j-1} + \Delta t (V_j^m+V_{j-1}^m) \right), \quad 
\frac{\Delta x}{2} - \Delta t V_j^m = \frac{1}{2} \left(h^{m+1}_{j} - \Delta t (V_{j+1}^m+V_{j}^m) \right),
\]
explicitly reads 
\begin{equation}\label{eq:21}
U^{m+1}_j
=
\frac{U^m_{j-1}+2U^m_j + U^m_{j+1}}{4} +
\frac{\Delta t}{4} \left[
\mathcal{F}^m_j
- 
\mathcal{F}^m_{j+1}
\right]
\end{equation}
with the notation 
\begin{equation*}
    \mathcal{F}_j^m \coloneqq \frac{1}{h^{m+1}_{j-1}} (U_j^m + U_{j-1}^m)(V_j^m + V_{j-1}^m).
\end{equation*}
The Eulerian-Lagrangian scheme is extended to the case of nonlocal conservation laws in \cite{abreu_nonloc}.
We strengthen a little the hypothesis therein required, assuming the maximal density $u_{max}=1$ and that:
\begin{enumerate}[label=$\bm{(v_\Delta)}$]
\item \label{hyp:v_num} 
$v(r) = (1-r)$ for all $r \in [0,1]$; 
\end{enumerate}

\begin{enumerate}[label=$\bm{(\eta_\Delta)}$]
\item \label{hyp:eta_num}
$\eta \in \C2([-1,0]; \reali)$ non-decreasing function such that $\norma{\eta}_{\L1(\reali; \reali)}=1$ and $\eta(-1)=0$.
\end{enumerate}

As before, we adopt the notation $\eta_H$ to denote the kernel function rescaled of a parameter $H>0$, i.e. $\eta_H(x) \coloneqq \frac{1}{H}\eta \left(\frac{x}{H}\right)$.
The main difference with the scheme illustrated above lies in the appropriate description of the convolution term. To this aim, observe that
$\text{supp}\,\eta_H \subset \left[-N_H \Delta x, 0\right]$ where $ N_H \coloneqq\lceil \frac{H}{\Delta x}\rceil$
and $ \lceil\cdot\rceil$ is the ceiling function and introduce the following discrete convolution $*_\Delta$:
\begin{equation}\label{eq:27}
    f*_\Delta \eta_H \coloneqq \sum_{j}\left(f*_{\Delta} \eta_H \right)_j \chi_{C_j}, \quad 
    \left(f*_{\Delta} \eta_H \right)_j\coloneqq  
    \frac{\Delta x}{\sum_{i=0}^{-N_H +1} \eta_H(x_i) \Delta x}\sum_{k=0}^{-N_H+1} f_{j-k} \, \eta_H (x_k)
\end{equation}
with $f \in \L\infty(\reali; \reali)$ piecewise constant function of the form $f = \sum_{j} f_j \chi_{C_j}$. Observe that in the case $0 \leq H \leq \Delta x$ the discrete convolution reduces to the pointwise evaluation: indeed, in that case $N_H=1$ and for all $j \in \mathbb{Z}$
\[
\left(f*_{\Delta} \eta_H \right)_j = \frac{1}{\sum_{i=0}^{-N_H +1} \eta_H(x_i)} f_{j} \, \eta_H (0) = f_j.
\]
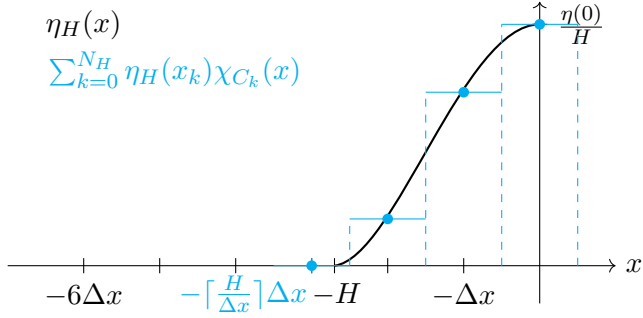
\begin{figure}
\begin{minipage}{0.55\textwidth}
\begin{tikzpicture}[scale=1]
  % axes
  \draw[->] (-7,0) -- (1,0) node[right] {$x$};
  \draw[->] (0,-0.5) -- (0,3.5);

  % marks
  \draw (-6,0.1) -- (-6,-0.1) node[below] {$-6\Delta x$};
  \draw (-1,0.1) -- (-1,-0.1) node[below] {$-\Delta x$};
  \draw (-2,0.1) -- (-2,-0.1);
  \draw (-3,0.1) -- (-3,-0.1);
  \draw (-4,0.1) -- (-4,-0.1);
  \draw (-5,0.1) -- (-5,-0.1);

  \draw[black] (-0.1, 3.2) -- (0.1, 3.2 )node[right] {$\frac{\eta(0)}{H}$};

  % -H
  \draw[below,black] (-2.7,0.1) -- (-2.7,-0.1)node[below] {$\,-H$};

  \draw[cyan] (-3,0.1) -- (-3,-0.1);
  \node[below, cyan] at (-3.9,-0) {$- \lceil \frac{H}{\Delta x}\rceil \Delta x$};

  % eta
  \draw[thick,black]
  (-2.7,0) .. controls (-1.9,0.1) and (-1 ,3.2) .. (0,3.2);

  \fill[cyan] (-3,0) circle (2pt);
  \fill[cyan] (-2,0.62) circle (2pt);
  \fill[cyan] (-1,2.3) circle (2pt);
  \fill[cyan] (0,3.2) circle (2pt);

  \node at (-6, 3.2) {$\eta_H(x)$};
  \node at (-4.8, 2.6) {\color{cyan}$\sum_{k=0}^{N_H}\eta_H(x_k)\chi_{C_k}(x)$};

  \draw[cyan, dashed] (0.5,0) -- (0.5,3.2);
  \draw[cyan, dashed] (-0.5,0) -- (-0.5,3.2);
  \draw[cyan] (-0.5,3.2) -- (0.5,3.2);

  \draw[cyan, dashed] (-1.5,0) -- (-1.5,2.3);
  \draw[cyan] (-1.5,2.3) -- (-0.5,2.3);

  \draw[cyan, dashed] (-2.5,0) -- (-2.5,0.62);
  \draw[cyan] (-2.5,0.62) -- (-1.5,0.62);

  \draw[cyan] (-3.5,0) -- (-2.5,0);
\end{tikzpicture}
\end{minipage}
\begin{minipage}{0.4\textwidth}
\caption{The kernel function $\eta_H$ and its piecewise constant projection onto the grid.}
\end{minipage}
\end{figure}
The Eulerian Lagrangian scheme for~\eqref{eq:23} reads
\begin{align}\label{eq:28}
\left(U_H\right)^{m+1}_j
= &
\frac{\left(U_H\right)^m_{j-1}+2\left(U_H\right)^m_j + \left(U_H\right)^m_{j+1}}{4} +
\frac{\Delta t}{4} \left[
\left( \mathcal{F}_H\right)^m_j
- 
\left(\mathcal{F}_H\right)^m_{j+1}
\right]
\end{align}
where 
\[
\left(\mathcal{F}_H\right)^m_j = \, 
\frac{1}{\left( h_H \right)^{m+1}_{j-1}} 
\left(\left(U_H\right)_j^m + \left(U_H\right)_{j-1}^m\right)
\left(\left(V_H\right)_j^m + \left(V_H\right)_{j-1}^m\right), 
\]
\[
\left(V_H\right)_j^m = \,
v \left(  \left( \left(U_H\right)^m*_\Delta \eta_{H}
\right)_j\right),
\qquad
\left(h_H\right)^{m+1}_{j} = \, \Delta x +
\Delta t \left( 
\left( V_H\right)^m_{j+1} - \left( V_H\right)^m_{j}
\right).
\]
The convergence of the scheme to the unique solution to~\eqref{eq:23} is proved in \cite[Theorem 2.1]{abreu_nonloc} assuming the same CFL condition~\eqref{eq:20}. Moreover, the numerical method satisfies the maximum principle (\cite[Lemma 3.2]{abreu_nonloc}).
\begin{theorem}[Discrete \emph{nonlocal-to-local} limit]\label{teo:10}
    Assume \ref{hyp:eta_num} and \ref{hyp:v_num} hold and fix $\Delta x >0$. Consider 
    \begin{align}
    \mathcal{U}_{\Delta} \coloneqq \big\{ 
    &
    U_o \in (\L1 \cap \BV)(\reali; [0,1]) \colon \, \exists (U_{o,j})_{j \in \mathbb{Z}} \,\, \text{s.t.}\, \, U_o= \sum_{j \in \mathbb{Z}} U_{o,j}\chi_{C_j}\big\}
\end{align} 
where $C_j = \left[ \left(j-\frac{1}{2}\right)\Delta x , \left(j+\frac{1}{2}\right)\Delta x \right]$. 
For all $U_o \in \mathcal{U}_{\Delta}$ and every sequences $\left\{ U_o^n \right\}_n \subset \mathcal{U}_{\Delta}$ convergent to $U_o$ in $\L1(\reali; \reali)$ and $H_n \to 0^+$, call $U^n $ the discrete solution to~\eqref{eq:23} obtained by the Eulerian-Lagrangian scheme with initial datum $U_o^n$ and convolution kernel $\eta_{H_n}$ defined by the scheme~\eqref{eq:28} and let $U$ defined by the Eulerian-Lagrangian scheme~\eqref{eq:21} applied to the Cauchy problem for the local conservation law~\eqref{eq:24} with initial datum $U_o$.
Then it holds that 
\[
\sup_{t \in [0,T]} \norma{U^n(t,x) - U(t,x)}_{\L1(\reali; \reali)}\xrightarrow[]{n \to + \infty} 0.
\]
\end{theorem}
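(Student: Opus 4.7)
The plan is to exploit the fact that once $H_n < \Delta x$ the discrete convolution $*_\Delta\eta_{H_n}$ defined in~\eqref{eq:27} collapses to the identity on grid functions, so that the nonlocal recursion~\eqref{eq:28} for $U^n$ coincides with the local recursion~\eqref{eq:21}. Consequently, $U^n$ and $U$ become iterates of the \emph{same} one-step map starting from different initial data, and the $\L1$ convergence of the initial data propagates through the (finitely many) time steps by Lipschitz continuity of the scheme.

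First, since $H_n \to 0^+$ and $\Delta x$ is fixed, there exists $n_0 \in \naturali$ such that $H_n \leq \Delta x$ for all $n \geq n_0$. For such $n$, $N_{H_n} = \lceil H_n/\Delta x\rceil = 1$, the sum in~\eqref{eq:27} reduces to the term $k = 0$, and $\left( U^{n,m} *_\Delta \eta_{H_n}\right)_j = U^{n,m}_j$. Therefore $(V_{H_n})^m_j = v\!\left(U^{n,m}_j\right)$, and the nonlocal scheme~\eqref{eq:28} applied to $U^n$ coincides verbatim with the local scheme~\eqref{eq:21}.

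Next, denote by $F$ the one-step map $U^m \mapsto U^{m+1}$ defined by~\eqref{eq:21}. The CFL condition~\eqref{eq:20} yields $h^{m+1}_{j-1} \geq \tfrac{3}{4}\Delta x > 0$, while the discrete maximum principle of \cite[Lemma 3.2]{abreu_nonloc} keeps every $U^m_j \in [0,1]$, where $v$ is smooth by~\ref{hyp:v_num}. Each $U^{m+1}_j$ is then a composition of globally Lipschitz functions of the finite tuple $(U^m_{j-2}, U^m_{j-1}, U^m_j, U^m_{j+1})$, with Lipschitz constant $L_F$ depending only on $\Delta x$, $\Delta t$ and $v$. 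Summing the resulting pointwise estimate over $j \in \interi$ and multiplying by $\Delta x$ gives
\begin{equation*}
    \| F(U^m) - F(\tilde U^m)\|_{\L1(\reali;\reali)} \leq C_F \, \| U^m - \tilde U^m\|_{\L1(\reali;\reali)}
\end{equation*}
for every pair of grid functions $U^m,\tilde U^m \in \mathcal U_\Delta$, with $C_F$ independent of $n$.

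Setting $M \coloneqq \lceil T/\Delta t \rceil$ and iterating $M$ times for $n \geq n_0$, one concludes
\begin{equation*}
    \max_{0 \leq m \leq M} \| U^n(t^m,\cdot) - U(t^m,\cdot)\|_{\L1(\reali;\reali)} \leq C_F^M \, \| U^n_o - U_o\|_{\L1(\reali;\reali)} \xrightarrow[n \to \infty]{} 0.
\end{equation*}
Because by~\eqref{eq:40} both $U^n$ and $U$ are piecewise constant in time on the finite partition $\{[t^m,t^{m+1}]\}_{m=0}^{M-1}$, the supremum over $t \in [0,T]$ reduces to the maximum over $m$, and the claim follows. The main technical point will be the verification of the Lipschitz estimate for $F$, since the one-step map involves the rational factor $1/h^{m+1}_{j-1}$ and the quadratic product $(U^m_j + U^m_{j-1})(V^m_j + V^m_{j-1})$; however, the CFL-based lower bound on $h^{m+1}_{j-1}$ together with the $\L\infty$ bound from the maximum principle make every factor bounded away from singularities and globally Lipschitz on $[0,1]$, reducing the verification to routine calculus.
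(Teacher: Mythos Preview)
Your proposal is correct and follows essentially the same approach as the paper: both observe that once $H_n \leq \Delta x$ the discrete convolution collapses to the identity so that~\eqref{eq:28} coincides with~\eqref{eq:21}, and then reduce the statement to the $\L1$-continuous dependence of the local scheme on the initial datum. The only difference is that the paper simply cites \cite{abreu_loc} for this continuous dependence, whereas you supply a self-contained argument via the Lipschitz continuity of the one-step map; your stencil is slightly larger than needed (in fact $U^{m+1}_j$ depends only on $U^m_{j-1},U^m_j,U^m_{j+1}$, not on $U^m_{j-2}$), but this does not affect the validity of the bound.
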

\begin{proof}
Observe that as $H_n \to 0^+$, we can suppose that $0<H_n \leq \Delta x$. Hence, by definition of discrete convolution $*_\Delta$~\eqref{eq:27}, the Eulerian-Lagrangian scheme for nonlocal conservation laws~\eqref{eq:28} cannot be distinguished from the analogous for local ones~\ref{eq:21}. Hence, the thesis reduces to show that at each time $t \in [0,T]$ the approximate solutions to~\eqref{eq:24} are continuously dependent on the initial datum w.r.t. the $\L1$-norm, which is a property of the scheme, see \cite{abreu_loc}. 
\end{proof}

\subsection{Discrete control problem in the \emph{nonlocal-to-local} limit.}
In the present subsection we finally present the discrete analogue of the control problem discussed before in the continuum case. Again, the main tool to establish a link between minimizers in the \emph{nonlocal-to-local} limit is the $\Gamma$-convergence at the discrete level. 
\begin{theorem}[Discrete $\Gamma$-convergence]\label{teo:8}
    Assume \ref{hyp:eta_num} and \ref{hyp:v_num} hold and fix $M_\Delta, \Delta x > 0$ and $K \subset \subset \reali$. 
    Consider the space
 \vspace{-0.7cm}   
\begin{center}
\begin{equation}\label{eq:34}
\mathcal{U}_{\Delta,ad} \coloneqq
\left\{\, U_o \in (\L1 \cap \BV)(\mathbb{R};[0,1]) \;\middle|\;
\begin{minipage}[t]{0.38\textwidth} 
\raggedright
\vspace{-17pt}
$\exists (U_{o,j})_{j \in \mathbb{Z}}$ s.t. 
$U_o = \sum_{j \in \mathbb{Z}} U_{o,j}\chi_{C_j}$, \\
\vspace{+7pt}
$\tv(U_o) \leq M_\Delta$, \quad $supp(U_o) \subset K$
\end{minipage}
\right\}
\end{equation} 
\end{center}
equipped with the metric induced by the $\L1$ norm. 
Consider the functionals \ref{func_J},\ref{func_K} and \ref{func_I} as in \Cref{teo:4}.
    Consider the functionals $\mathcal{G}_{\Delta, H}: \mathcal{U}_{\Delta, ad} \to \reali$ indexed by $H > 0 $ and $\mathcal{G}_\Delta: \mathcal{U}_{\Delta, ad} \to \reali$ defined as: 
    \begin{align}\label{eq:41}
 \mathcal G_{\Delta, H}(U_o) \coloneqq 
 &
 \mathcal J(\left(U_H\right)(T, \cdot)) + \mathcal K(U_H) +  \mathcal I (U_o) 
 &
\text{with $U_H$ given by~\eqref{eq:28}} ,
 \\
 \label{eq:51}
 \mathcal G_\Delta(U_o) \coloneqq 
 &
 \mathcal J(U(T, \cdot)) + \mathcal K(U) + \mathcal I(U_o) 
 &
\text{with $U$ given by~\eqref{eq:21}} .
\end{align}
Then, the family of functionals $\left\{ \mathcal{G}_{\Delta,H}\right\}_H $ $\Gamma$-converges to $\mathcal{G}_\Delta$ in the limit $H \to  0^+$.
\end{theorem}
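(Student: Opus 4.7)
The plan is to mirror the structure of the proof of \Cref{teo:4} (the continuum $\Gamma$-convergence), substituting the discrete nonlocal-to-local convergence result \Cref{teo:10} for its continuum counterpart \Cref{teo:7}. Since $\mathcal{U}_{\Delta, ad}$ is endowed with the $\L1$ metric, it satisfies the first axiom of countability, and by \cite[Proposition 8.1]{zbMATH00735875} it suffices to establish $\Gamma$-convergence sequentially along arbitrary vanishing sequences $H_n \to 0^+$, i.e.\ to verify the lower bound inequality \ref{Gamma conv 1} and the upper bound equality \ref{Gamma conv 2} in this discrete setting.

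For \ref{Gamma conv 1}, I would fix an arbitrary $U_o \in \mathcal{U}_{\Delta, ad}$ and a sequence $\{U_o^n\}_n \subset \mathcal{U}_{\Delta, ad}$ converging to $U_o$ in $\L1(\reali;\reali)$. Denote by $U_{H_n}^n$ the discrete nonlocal solution produced by the Eulerian-Lagrangian scheme~\eqref{eq:28} with kernel $\eta_{H_n}$ and initial datum $U_o^n$, and by $U$ the discrete local solution produced by~\eqref{eq:21} with initial datum $U_o$. \Cref{teo:10} provides the crucial convergence
\[
\sup_{t \in [0,T]} \norma{U_{H_n}^n(t, \cdot) - U(t, \cdot)}_{\L1(\reali;\reali)} \xrightarrow[n \to +\infty]{} 0,
\]
which, together with the finite speed of propagation built into the CFL condition~\eqref{eq:20} (so that all solutions remain supported in a fixed compact enlargement of $K$), implies convergence in $\L1_{loc}(\reali;\reali)$ at $t = T$ and in $\L1_{loc}([0,T]\times\reali;\reali)$. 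Continuity of $\mathcal J$ and $\mathcal K$ in the $\L1_{loc}$ topology via \ref{func_J}--\ref{func_K}, lower semi-continuity of $\mathcal I$ via \ref{func_I}, and non-negativity then yield
\[
\mathcal G_\Delta(U_o) = \mathcal J(U(T)) + \mathcal K(U) + \mathcal I(U_o) \leq \liminf_{n \to +\infty} \mathcal G_{\Delta, H_n}(U_o^n).
\]

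For \ref{Gamma conv 2}, I would use the constant recovery sequence $U_o^n \coloneqq U_o$ for all $n \in \naturali$, which trivially lies in $\mathcal{U}_{\Delta, ad}$ and converges to $U_o$. Again \Cref{teo:10} gives $U_{H_n} \to U$ in $C^0([0,T]; \L1(\reali;\reali))$, and continuity of $\mathcal J, \mathcal K$ together with $\mathcal I(U_o^n) = \mathcal I(U_o)$ delivers $\lim_{n \to \infty} \mathcal G_{\Delta, H_n}(U_o) = \mathcal G_\Delta(U_o)$, as required. There is no serious obstacle: the discrete setting is in fact easier than the continuum one, because (as observed in the proof of \Cref{teo:10}) for $H_n \leq \Delta x$ the discrete convolution $*_\Delta$ reduces to a pointwise evaluation, and the nonlocal scheme~\eqref{eq:28} coincides with the local scheme~\eqref{eq:21}. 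Thus the entire argument ultimately reduces to the $\L1$-continuity of the Eulerian-Lagrangian scheme for~\eqref{eq:24} with respect to its initial datum, already recorded in \cite{abreu_loc} and used inside \Cref{teo:10}.
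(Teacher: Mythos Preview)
Your proposal is correct and follows essentially the same approach as the paper: sequential $\Gamma$-convergence via the lower bound inequality and the upper bound equality, both reduced to \Cref{teo:10} together with the continuity of $\mathcal J,\mathcal K$ and lower semi-continuity of $\mathcal I$, using the constant recovery sequence for the upper bound. Your additional remarks on finite propagation speed and on the coincidence of the schemes for $H_n\leq \Delta x$ are accurate elaborations but not needed beyond what the paper itself records.
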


\begin{proof}
  The proof is the discrete re-adaptation of the one to \Cref{teo:4} taking advantage of the convergence results in \Cref{teo:10}. We sketch here the proof for completeness. Indeed, the proof is completed once the following conditions are verified: 
  \begin{enumerate}
[label=$\bm{(\Gamma_\Delta1)}$]
\item \label{Gamma conv 1 discrete} 
\textbf{Lower bound inequality.}
For every $U_o \in \mathcal{U}_{\Delta, ad}$ and for every sequence $\{U_{o}^n\}_n \subset \mathcal{U}_{\Delta, ad}$ such that $U_{o}^n \to U_{o}$ in $\L1(\reali; \reali)$, it holds that
\begin{equation*}
    \mathcal G_\Delta(U_{o}) \leq \liminf_{n \to \infty} \mathcal G_{\Delta, H_n}(U^n_{o}).
\end{equation*}
\end{enumerate}
\begin{enumerate}
[label=$\bm{(\Gamma_\Delta2)}$]
\item \label{Gamma conv 2 discrete}
\textbf{Upper bound equality.}
For every $U_{o} \in \mathcal{U}_{\Delta, ad}$ there exists a sequence  $\{U_{o}^n\}_n$ converging to $U_{o}$ in $\mathcal{U}_{\Delta, ad}$ such that
\begin{equation*}
    \mathcal G_\Delta(U_{o}) = \lim_{n \to \infty} \mathcal G_{\Delta, H_n}(U_{o}^n).
\end{equation*}
\end{enumerate}
\textbf{Proof to \ref{Gamma conv 1 discrete}.}
Fix $U_o \in \mathcal{U}_{\Delta, ad}$ and a convergent sequence $\{U^n_o\}_n \subset  \mathcal{U}_{\Delta, ad}$. Then, \Cref{teo:10} yields that $U^n \to U$ in $\L\infty([0,T]; \L1(\reali; \reali))$, hence also in $\L1([0,T]\times \reali; \reali )$. 
Moreover, \Cref{teo:10} yields in particular that $U^n(T) \to U(T)$ in $\L1(\reali; \reali)$.
Then, the continuity of $\mathcal{J},\mathcal{ K}$ and the lower semi-continuity of $\mathcal{I}$ allow to conclude.\\
\textbf{Proof to \ref{Gamma conv 2 discrete}.}
Fix $U_o \in \mathcal{U}_{\Delta, ad}$ and set $U_o^n \coloneqq U_o$ for all $n \in \naturali$. An application of \Cref{teo:10} yields the desired equality, completing the proof.
\end{proof}
After proving the existence of minimizers at the discrete level, by means of the previous $\Gamma$-convergence result, we can prove the claimed convergence (up to subsequence) of minimizers to $\mathcal{G}_{\Delta, H}$ to minimizers to $\mathcal{G}_\Delta$.
\begin{proposition}[Existence of minimizers to the discrete problems]\label{prop:3}
Let $\mathcal J, \mathcal I: \Lloc1(\reali; \reali) \to \reali_{\geq 0}$ and $\mathcal K: \Lloc1([0,T]\times \reali; \reali) \to \reali_{\geq 0}$ be lower semi-continuous functionals w.r.t. the $\Lloc1$ topology. 
For each $H$ fixed, consider the functionals $\mathcal G_{\Delta, H}, \mathcal G_\Delta$ as in~\eqref{eq:41} and~\eqref{eq:51} defined on the space $\mathcal U_{\Delta, ad}$ in~\eqref{eq:34}. Then, there exist minimizers to the control problems
$$ \min_{U_o \in \mathcal{U}_{\Delta, ad}} \mathcal G_{\Delta,H}(U_o), \quad \min_{U_o \in \mathcal{U}_{\Delta,ad}} \mathcal G_\Delta(U_o).
$$
\end{proposition}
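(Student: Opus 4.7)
The plan is to apply the direct method of the calculus of variations, following the blueprint of \Cref{prop:1}. The key simplification compared with the continuum case is that the discrete setting reduces to a finite-dimensional problem: since the grid spacing $\Delta x$ and the compact support $K$ are both fixed, only finitely many cells $C_j$ meet $K$, say those indexed by $j \in \mathcal{J}$ with $|\mathcal{J}|=N$.

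First, I would show that $\mathcal{U}_{\Delta, ad}$ is compact in $\L1(\reali; \reali)$. Each $U_o \in \mathcal{U}_{\Delta, ad}$ is identified with its vector of cell values $(U_{o,j})_{j \in \mathcal{J}} \in [0,1]^N$, subject to the closed constraint $\tv(U_o) = \sum_j |U_{o,j+1} - U_{o,j}| \leq M_\Delta$. This cuts out a closed and bounded, hence compact, subset of $\reali^N$; since $\norma{U_o - V_o}_{\L1(\reali; \reali)} = \Delta x \sum_{j \in \mathcal{J}} |U_{o,j} - V_{o,j}|$, the identification is a homeomorphism onto $\mathcal{U}_{\Delta, ad}$ endowed with its $\L1$ metric.

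Given a minimizing sequence $\{U_o^n\}_n \subset \mathcal{U}_{\Delta, ad}$, compactness yields a subsequence converging in $\L1(\reali; \reali)$ to some $U_o \in \mathcal{U}_{\Delta, ad}$. I would then invoke the continuous dependence of the Eulerian-Lagrangian schemes on initial data: inspecting \eqref{eq:28} (resp. \eqref{eq:21}), each time step is a composition of finitely many smooth operations on the cell values, with denominators $h_j^{m+1}$ kept away from zero by the CFL condition \eqref{eq:20}. Hence the map $U_o \mapsto U_H$ (resp. $U_o \mapsto U$) is continuous from $\mathcal{U}_{\Delta, ad}$ into $\L\infty([0,T]; \L1(\reali; \reali))$; in particular $U_H^n(T) \to U_H(T)$ in $\L1(\reali; \reali)$ and $U_H^n \to U_H$ in $\L1([0,T] \times \reali; \reali)$.

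Finally, the $\Lloc1$ lower semi-continuity of $\mathcal{J}, \mathcal{K}, \mathcal{I}$ yields
\[
\mathcal{G}_{\Delta, H}(U_o) \leq \liminf_{n \to \infty} \mathcal{G}_{\Delta, H}(U_o^n) = \inf_{V_o \in \mathcal{U}_{\Delta, ad}} \mathcal{G}_{\Delta, H}(V_o),
\]
so that $U_o$ is a minimizer of $\mathcal{G}_{\Delta, H}$; the same argument, now using continuity of the local scheme on its initial datum, gives existence of a minimizer for $\mathcal{G}_\Delta$. No substantial obstacle is expected: once the compactness of $\mathcal{U}_{\Delta, ad}$ is read off from its finite-dimensional parametrisation and the scheme's continuous dependence on the initial cell values is noted, the conclusion follows directly from the lower semi-continuity hypotheses on the functionals.
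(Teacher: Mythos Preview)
Your proof is correct and follows essentially the same direct-method approach as the paper: compactness of $\mathcal{U}_{\Delta,ad}$, continuous dependence of the Eulerian--Lagrangian scheme on the initial datum, and lower semi-continuity of $\mathcal{J},\mathcal{K},\mathcal{I}$. The only difference is that you supply explicit justifications (finite-dimensional parametrisation for compactness, smoothness of the update map for continuous dependence) where the paper merely cites \cite{abreu_nonloc,abreu_loc}.
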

\begin{proof}
    The proof directly descends to the property of lower semi-continuity of $\mathcal J, \mathcal K$ and $\mathcal{I}$, the compactness of $\mathcal{U}_{\Delta, ad}$ w.r.t. the $\L1$ topology and the continuous dependence on the initial datum of the solutions generated by the Eulerian-Lagrangian scheme (see \cite[Theorem 5.1]{abreu_nonloc} and \cite{abreu_loc}). 
\end{proof}

\begin{theorem}[Convergence of minimizers to the discrete problems]\label{teo:9}
Given $H_n \to 0^+$, consider the functionals $\mathcal G_{\Delta,H_n}$, $\mathcal G_\Delta$ as in \Cref{teo:8} and let $U_{o}^n$ be a minimizer of $\mathcal{G}_{\Delta,H_n}$ in $\mathcal{U}_{\Delta, ad}$ for all $n \in \naturali$. Then, up to a subsequence, $U_o^n \to U_o$ in $\L1(\reali; \reali)$ where $U_o$ is a minimizer of $\mathcal G_\Delta$ in $\mathcal{U}_{\Delta,ad}$. 
\end{theorem}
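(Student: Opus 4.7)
The plan is to mirror the strategy used for Theorem~\ref{teo:6} in the continuum setting, combining the compactness of the admissible class $\mathcal{U}_{\Delta,ad}$ in $\L1(\reali;\reali)$ with the $\Gamma$-convergence result in \Cref{teo:8} and the general abstract tool in \Cref{teo:2}.

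First I would establish compactness of $\mathcal{U}_{\Delta,ad}$ in $\L1(\reali;\reali)$. Given any sequence $\{U_o^n\}_n\subset \mathcal{U}_{\Delta,ad}$, the uniform bounds $\norma{U_o^n}_{\L\infty(\reali;\reali)}\leq 1$, $\tv(U_o^n)\leq M_\Delta$ and the common compact support $\mathrm{spt}(U_o^n)\subset K$ allow an application of Helly's Selection Theorem (see \cite[Theorem 2.3]{Bressan}), producing a subsequence $U_o^{n_k}$ converging pointwise a.e.\ to some $U_o\in \L\infty(\reali;\reali)$ with $\mathrm{spt}(U_o)\subset K$; dominated convergence then upgrades this to convergence in $\L1(\reali;\reali)$. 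To verify $U_o\in \mathcal{U}_{\Delta,ad}$, one uses the lower semi-continuity of $\tv$ with respect to the $\L1$ topology, which yields $\tv(U_o)\leq M_\Delta$, and the fact that the subspace of functions which are piecewise constant on the fixed grid $\{C_j\}_{j\in\interi}$ is closed in $\L1(\reali;\reali)$, so the limit retains the required discrete structure and takes values in $[0,1]$.

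With compactness in hand, extract from the sequence of minimizers $\{U_o^n\}_n\subset \mathcal{U}_{\Delta,ad}$ a subsequence $\{U_o^{n_k}\}_k$ converging to some $U_o\in \mathcal{U}_{\Delta,ad}$ in $\L1(\reali;\reali)$. Since \Cref{teo:8} asserts that $\{\mathcal{G}_{\Delta,H_n}\}_n$ $\Gamma$-converges to $\mathcal{G}_\Delta$ in $\mathcal{U}_{\Delta,ad}$ as $n\to\infty$, and since $\mathcal{U}_{\Delta,ad}$ is a metric space (so sequential $\Gamma$-convergence coincides with $\Gamma$-convergence), we are in the exact setting of \Cref{teo:2}: each $U_o^{n_k}$ minimizes $\mathcal{G}_{\Delta,H_{n_k}}$ and the sequence clusters at $U_o$. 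Applying \Cref{teo:2} concludes that $U_o$ is a minimizer of $\mathcal{G}_\Delta$ in $\mathcal{U}_{\Delta,ad}$, completing the proof.

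I do not foresee a substantial obstacle: the existence of minimizers $U_o^n$ is guaranteed by \Cref{prop:3}, the heavy lifting on $\Gamma$-convergence is already done in \Cref{teo:8}, and the discrete compactness is considerably easier than its continuous counterpart in \Cref{lem:2} thanks to the common compact support built into the definition~\eqref{eq:34} of $\mathcal{U}_{\Delta,ad}$. The only mildly delicate point is the verification that the $\L1$-limit of a sequence of grid-piecewise-constant functions remains grid-piecewise-constant, which follows from the closedness of this finite-dimensional-on-compacta subspace in $\L1(\reali;\reali)$.
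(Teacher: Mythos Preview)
Your proposal is correct and follows essentially the same approach as the paper: the paper's proof simply states that the result ``directly descends from the compactness of $\mathcal{U}_{\Delta, ad}$ in $\L1(\reali; \reali)$ and \Cref{teo:2}'', and you have filled in exactly those two ingredients, supplying the (straightforward) details of the compactness argument that the paper leaves implicit.
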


\begin{proof}
    The proof directly descends from the compactness of $\mathcal{ U}_{\Delta, ad}$ in $\L1(\reali; \reali)$ and \Cref{teo:2}. 
\end{proof}

\section{Numerical simulations}
In the present section we show an optimization problem for a specific tracking-type functional with distributed observation. 
At first we present a grid convergence analysis for the local and nonlocal control problems which will be investigated by means of the built-in Matlab function \texttt{fmincon}. 
Then we will numerically validate the convergence of minimizers to $\Gamma$-convergent functionals proved in \Cref{teo:9}. At last, we numerically show such convergence in the double limit $(\Delta x, H) \to 0^+$, being $\Delta x $ the spatial mesh and $H$ the parameter of the kernel function as usual. 

Throughout all the simulations we will suppose that the controls belong to admissible set $\mathcal{U}_{\Delta, ad}$ as in~\eqref{eq:34} with the choice of $K \coloneqq [-1,1]$ and  $\eta(x) = 2(x+1) \chi_{[-1,0]}(x)$.  Moreover, we impose absorbing conditions at the boundaries: as $\eta$ is anisotropic and the speed law $v$ is positive, we add one ghost cell on the left of the boundary and as many cells required for the well-posedness of the discrete convolution~\eqref{eq:27} at the right side.
Namely, for each $H, \Delta x >0$ we add $N_H\coloneqq \lceil \frac{H}{\Delta x}\rceil+1$ cells on the right. 
Then, the solution is constantly extended on such ghost cells, attaining the same value of the boundary.

With reference to the function \texttt{fmincon}, for fixed $\Delta x$ we will set the following options
\begin{lstlisting}
options = optimoptions('fmincon', 'MaxFunctionEvaluations', 1e5, 
           'MaxIterations', 1e3,'OptimalityTolerance', dx^2,
           'StepTolerance', dx^3, 'Display','iter')
\end{lstlisting} 
and initiate the algorithm with the projection in $\mathcal{U}_{\Delta, ad}$ of $
u_{init}(x) \coloneqq 0.25\chi_{[0,+\infty)}(x)+0.2.$
\subsection{The local optimization problem}
Consider the tracking-type functional with distributed observation which evaluates the distance in $\L1([0,T]\times [-1,1]; \reali)$ w.r.t. $u^d$, the entropy solution to 
\begin{equation}\label{eq:47}
    \begin{cases}
    \partial_t u^d + \partial_x \left(u^d v(u^d) \right) =0, \\
    u^d(0,\cdot) = (-x^2 +0.25)\chi_{[-0.5, 0.5]}(\modulo{x}) +0.2;
\end{cases}
\end{equation}
found by scheme~\eqref{eq:21} with mesh size $\Delta x^d \coloneqq 0.002$ and initial datum $ U_o^d = \sum_j \left(U_o^d\right)_j \chi_{C_j^d}$ with $\left(U_o^d\right)_j \coloneqq \frac{1}{\Delta x^d} \int_{C_j^d} u^d_o(x) \dd x$, $C_j^d \coloneqq \left[\left(j-\frac{1}{2}\right)\Delta x^d,\left(j+\frac{1}{2}\right)\Delta x^d \right]$. We call $U^d$ the approximation to $u^d$ found through the Eulerian-Lagrangian scheme and we will refer to it as \emph{reference solution}. 
For fixed $\Delta x>0$, we are interested in the minimization over $U_o \in \mathcal U_{\Delta, ad}$ of the functional
\begin{equation}\label{eq:46}
    \mathcal{G}_\Delta(U_o) \coloneqq \mathcal K (U) = \sum_{m=0}^M \sum_{\substack{j \in \mathbb{Z},\\ -1 \leq j\Delta x \leq 1}} \modulo{U^m_j - U^d(m\Delta t, j\Delta x)} \Delta x \Delta t,
\end{equation} 
where $U$ is set by the scheme~\eqref{eq:21}, $\Delta t =\Delta x/2$ and $M\Delta t = T = 0.25$. 
We note that here $\Delta t$ can be set greater than the CFL condition~\eqref{eq:20} as the approximate solutions provided by the scheme remain stable.

We devote ourselves to the study of the optimization problem $\min_{U_o \in \mathcal{U}_{\Delta, ad}} \mathcal{G}_\Delta(U_o)$, presenting a grid convergence analysis:
for different choices of $\Delta x$, we find $U_o^{min} \in \mathcal{U}_{\Delta, ad}$ by the algorithm \texttt{fmincon} and collect the result in Table \ref{table:1}. In particular we report the relative error
\begin{displaymath}
\norma{U_o^{min}- U_o^d }_{\L1([-1, 1]; \reali)} / \norma{ U_o^d }_{\L1([-1, 1]; \reali)},
\end{displaymath}
the evaluation of the functional in correspondence of the found minimizer $\mathcal{G}_{\Delta} \left( U_o^{min} \right)$, the number of iterations of the algorithm and the measure of First Order Optimality.
\begin{table}[h!]
\centering
\begin{tabular}{c c c c c} \hline $\Delta x$ & $\L1$ relative error & Functional value & Iterations & First Order Optimality \\ \hline 0.08 & 3.78e-02 & 8.15e-03 & 17 & 6.35e-03 \\ 0.04 & 9.91e-02 & 1.58e-02 & 11 & 1.58e-03 \\ 0.02 & 3.36e-02 & 4.33e-03 & 39 & 3.75e-04 \\ 0.01 & 5.31e-03 & 3.50e-04 & 191 & 1.50e-02 \\ \hline \end{tabular}
\caption{Grid convergence analysis for the optimization problem $\min_{U_o \in\mathcal U_{ad}} \mathcal{G}_\Delta(U_o)$ with \texttt{'StepTolerance'} equal to $\Delta x^3$.}
\label{table:1}
\end{table}

In \Cref{fig:local_minimizers}, we compare the exact minimizer $U^d_o$ with $U_o^{min}$ found by the algorithm for the choice $\Delta x =0.01$. Moreover, we show the evolution of $U^d$ and $U^{min}$, the corresponding solutions. 
\begin{figure}
    \centering
    \includegraphics[width=0.45\textwidth]{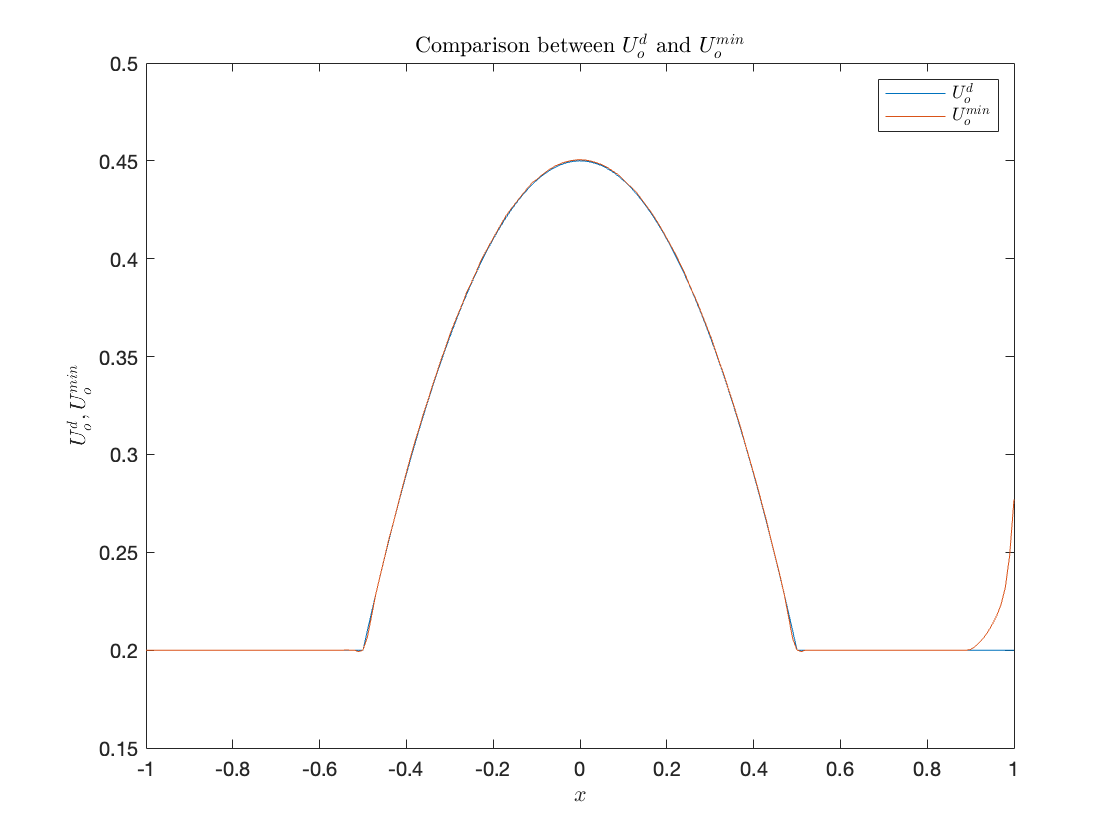}
    \includegraphics[width=0.45\textwidth]{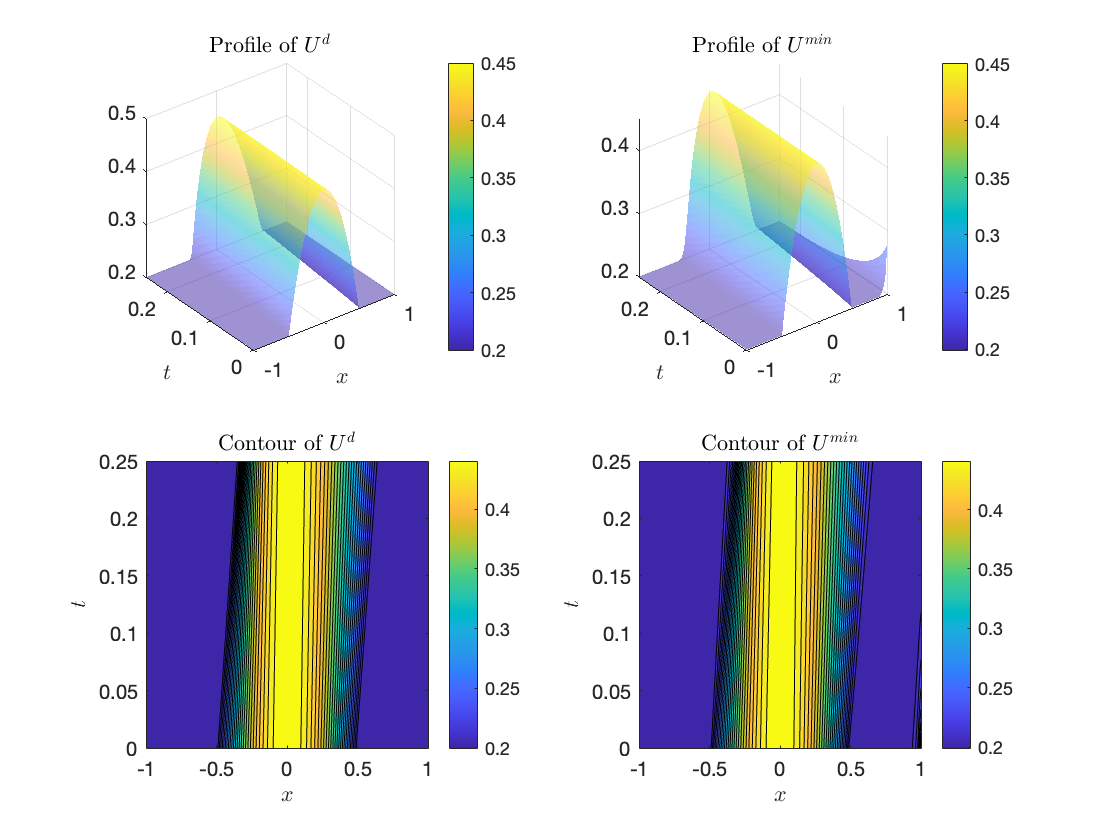}
    \caption{On the left,  we present the comparison between the real minimizer to $\mathcal{G}_\Delta$ (see~\eqref{eq:46}) $U_{o}^d$ and approximation $U_{o}^{min}$ in the case $\Delta x= 0.01$ and \texttt{'StepTolerance'} equal to $\Delta x^3$. The discrepancy at the right boundary may be justified by the fact that there the initial datum is rapidly leaving the domain, thus has no contribution in the objective functional, while on the same time the algorithm is initialized by the value 0.45 on that boundary.
    On the right, we show the evolution in time of $U^d$ and $U^{min}$, the last being the solution to~\eqref{eq:24} correspondent to the approximate optimal initial datum $U_o^{min}$. 
    }
    \label{fig:local_minimizers}
\end{figure}

Related to the options chosen for \texttt{fmincon}, due to the non-differentiability of the objective functional $\mathcal{G}_\Delta$, we are motivated to set \texttt{'StepTolerance'} equal to $\Delta x^3$. In fact, the choice \texttt{'StepTolerance'} equal to $\Delta x^2$ leads to not satisfying results (see\Cref{table:3} and \Cref{fig:local_minimizers_wrong}).
\begin{table}[h!]
\centering
\begin{tabular}{c c c c c} \hline $\Delta x$ & $\L1$ relative error & Functional value & Iterations & First Order Optimality \\ \hline 0.08 & 4.38e-01 & 7.71e-02 & 1 & 4.56e-02 \\ 0.04 & 9.91e-02 & 1.58e-02 & 11 & 1.58e-03 \\ 0.02 & 3.36e-02 & 4.33e-03 & 36 & 4.86e-04 \\ 0.01 & 8.37e-02 & 1.09e-02 & 51 & 2.00e-04 \\ \hline \end{tabular}
\caption{Grid convergence analysis for the optimization problem $\min_{U_o \in\mathcal U_{ad}} \mathcal{G}_\Delta(U_o)$ with \texttt{'StepTolerance'} equal to $\Delta x^2$.}
\label{table:3}
\end{table}
\begin{figure}
    \centering
    \includegraphics[width=0.45\textwidth]{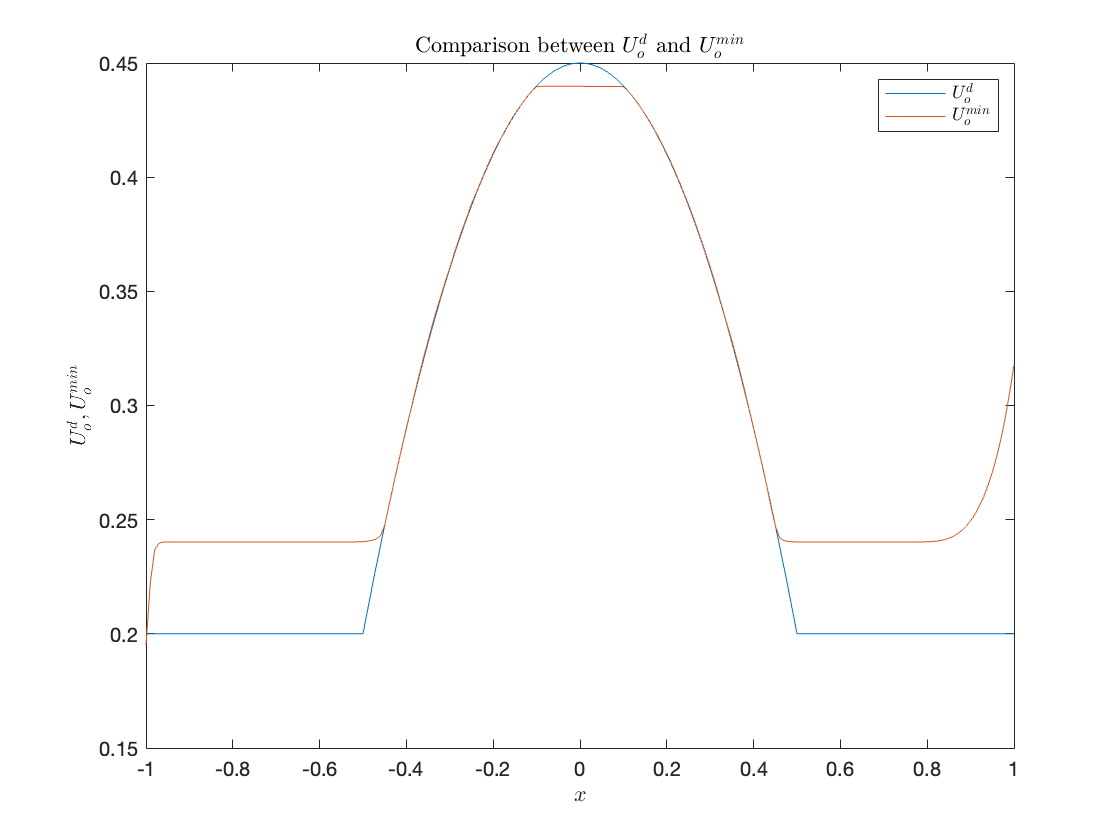}
    \includegraphics[width=0.45\textwidth]{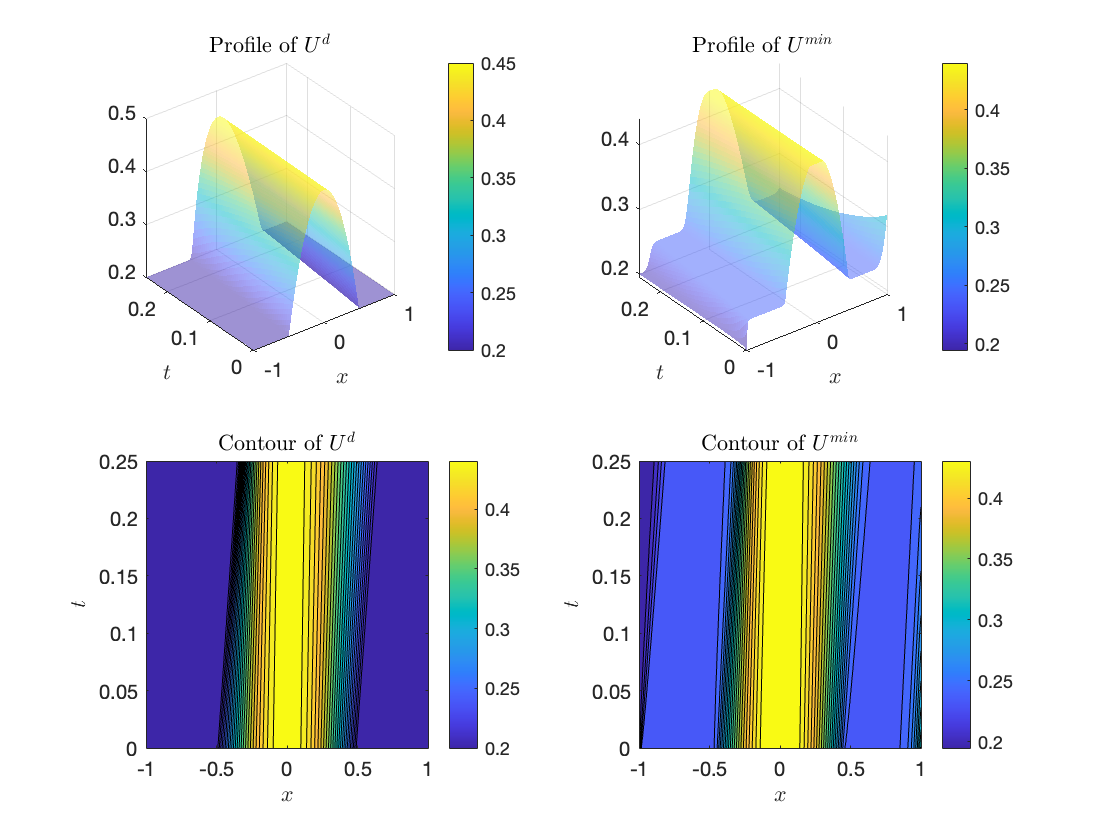}
    \caption{On the left,  we present the comparison between the real minimizer to $\mathcal{G}_\Delta$ (see~\eqref{eq:46}) $U_{o}^d$ and approximation $U_{o}^{min}$ in the case $\Delta x= 0.01$ and \texttt{'StepTolerance'} equal to $\Delta x^2$. 
    On the right, we show the evolution in time of $U^d$ and $U^{min}$, the last being the solution to~\eqref{eq:24} correspondent to the approximate optimal initial datum $U_o^{min}$.
    The minimizer found by Matlab $U_o^{min}$ is very much different from $U^d_o$, maybe due to the non-differentiability of $\mathcal G_\Delta$ and the choice of a piecewise-constant initial datum $u_{init}$. 
    }
\label{fig:local_minimizers_wrong}
\end{figure}
\subsection{The nonlocal optimization problem}
We present here an analogous analysis to the one showed above, turning to a control problem related to the resolution of the nonlocal conservation law~\eqref{eq:23} for $H=0.5$. 
In order to develop a grid convergence analysis we turn to the functional 
\begin{equation}\label{eq:45}
     \widetilde{ \mathcal{G}}_{\Delta, H} (U_o) \coloneqq \widetilde{\mathcal{K}}(U_H) =  \sum_{m=0}^M \sum_{\substack{j \in \mathbb{Z},\\ -1 \leq j\Delta x \leq 1}} \modulo{\left(U_H\right)^m_j - U^d_H(m\Delta t, j\Delta x)} \Delta x \Delta t,
\end{equation}
where $U_H^d$ (referred as \emph{reference solution}) is the discrete approximation (with mesh size $\Delta x^d = 0.002$) of the solution to the Cauchy problem for the nonlocal conservation law
\begin{equation}\label{eq:48}
    \begin{cases}
    \partial_t u_H^d + \partial_x \left(u_H^d v(u_H^d*\eta_H) \right) =0, \\
    u^d_H(0,\cdot) = (-x^2 +0.25)\chi_{[-0.5, 0.5]}(\modulo{x}) +0.2.
\end{cases}
\end{equation}
The discrete initial datum will be denoted $U_{o, H}^d$ as usual. 

In Table \ref{table:2} we collect the results,  reporting the relative error in $\L1([-1,1])$ w.r.t. the exact minimizer $U_{o,H}^d$, the functional value evaluated in the minimizer found by the algorithm, the number of iterations and the measure of first order optimality. 
\begin{table}[ht!]
\centering
\begin{tabular}{c c c c c} \hline $\Delta x$ & $\mathbf{L}^1$ relative error & Functional value & Iterations & First Order Optimality \\ \hline 0.08 & 3.90e-02 & 7.72e-03 & 17 & 3.80e-03 \\ 0.04 & 1.04e-01 & 1.57e-02 & 17 & 1.48e-03 \\ 0.02 & 3.41e-02 & 4.39e-03 & 46 & 3.28e-04 \\ 0.01 & 3.73e-03 & 3.24e-04 & 163 & 8.67e-05 \\ \hline \end{tabular}
\caption{Grid convergence analysis for the optimization problem $\min_{U_o \in\mathcal{U}_{\Delta, ad}} \tilde{\mathcal G}_{\Delta,H}(U_o)$.}
\label{table:2}
\end{table}
We call $U_{o,H}^{min}$ the minimizer found by \texttt{fmincon} for the choice of mesh size $\Delta x=0.01$ and in \Cref{fig:nonlocal_minimizers} we plot the evolution of $U^d_H$ and $U^{min}_H$, the solution found by the scheme~\eqref{eq:28} starting from $U_{o,H}^{min}$.
\begin{figure}
    \centering
\includegraphics[width=0.45\textwidth]{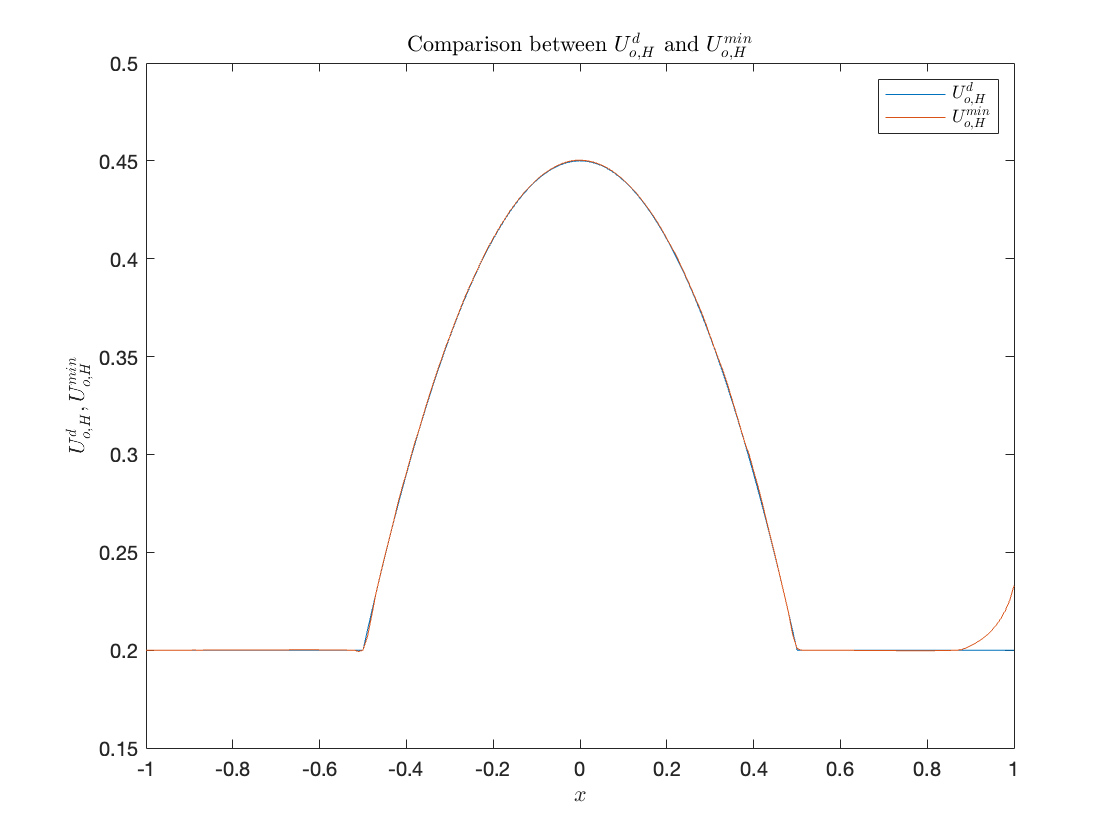}
    \includegraphics[width=0.45\textwidth]{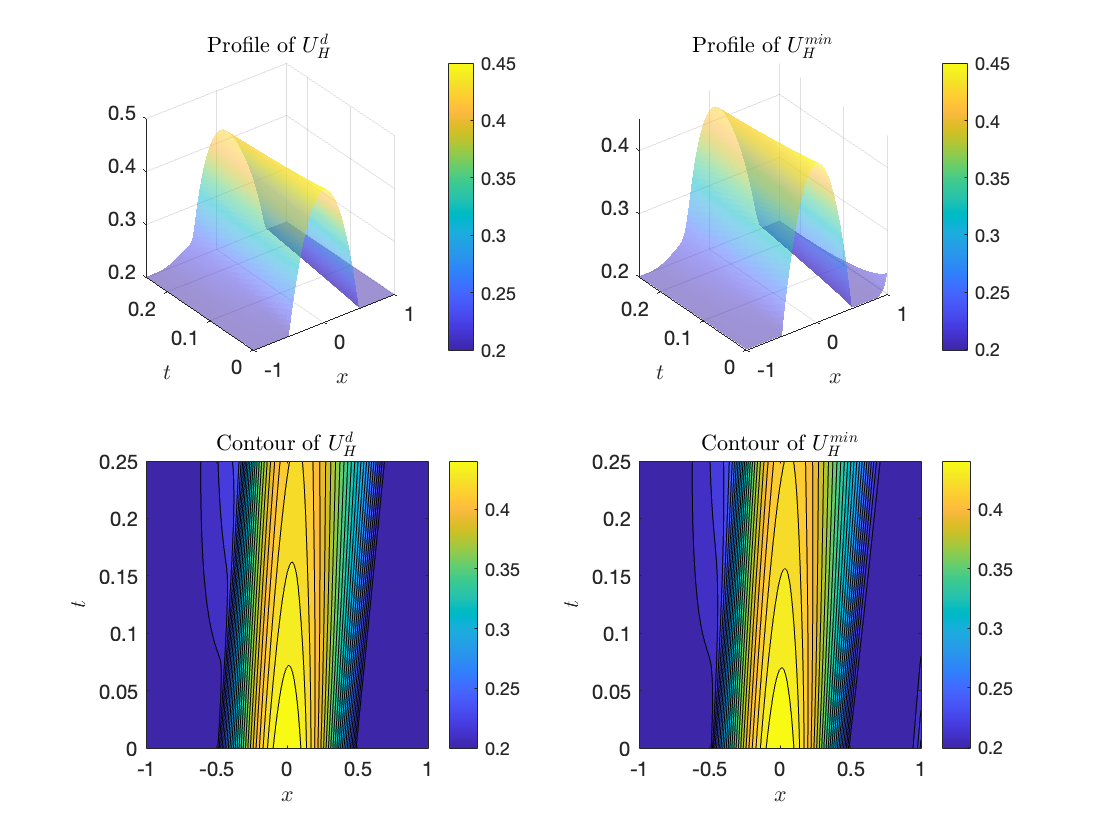}
    \caption{On the left,  we present the comparison between the real minimizer to $\tilde{\mathcal{G}}_{\Delta,H}$ (see~\eqref{eq:45}) $U_{o,H}^d$ and the approximation $U_{o,H}^{min}$ in the case $\Delta x= 0.01$.
    On the right, we show the evolution in time of $U_H^d$ and $U_H^{min}$, the last being the solution to~\eqref{eq:23} correspondent to the approximate optimal initial datum $U_{o,H}^{min}$. }
\label{fig:nonlocal_minimizers}
\end{figure}

\subsection{The discrete $\Gamma$-convergence: convergence of minimizers}
\label{sec:discrete gamma conv}
By the previous analysis we are motivated to choose $\Delta x=0.01$ to guarantee valid results for the optimization problems solved by \texttt{fmincon}. 
So, for this choice of mesh size, we can devote ourselves to the numerical validation of \Cref{teo:9}. 
Consider $\mathcal{G}_\Delta$ as defined in~\eqref{eq:46} and introduce for fixed $H>0$ the functional $\mathcal{G}_{\Delta,H} = \mathcal{K}(U_H)$ where $\mathcal{K}$ is given in~\eqref{eq:46} and $U_H$ is found by the scheme~\eqref{eq:28}.

Let $U_{o,H}^{min}$ be the minimizers to $\mathcal{G}_{\Delta, H}$ found by \texttt{fmincon} and $U_o^{min}$ the minimizer to $\mathcal{G}_\Delta$.

In \Cref{fig:discrete_Gamma_conv} we collect the results, proving the convergence in $\L1([-1,1])$. 
In the left figure we plot in red $U^d_o$ (the exact minimizer to $\mathcal{G}_\Delta$), in black $U^{min}_{o}$ (the minimizer to $\mathcal{G}_\Delta$ found by \texttt{fmincon}) and $U^{min}_{o, H}$ (the minimizers to $\mathcal{G}_{\Delta,H}$ found by \texttt{fmincon}). In the middle figure we zoomed the previous picture at $x=0.5$. At last, in the right figure, we report the relative error $\norma{U^{min}_{o,H} - U^{min}_o}_{\L1([-1,1];\reali )} / \norma{U_o^{min}}_{\L1([-1,1]; \reali)}$. 
Note that for the choice $H=0.005 < \Delta x= 0.01$, the correspondent relative error is zero, due to the fact that under the threshold $H \leq \Delta x$, the Eulerian-Lagrangian scheme for the nonlocal conservation law is identically equal to the correspondent one for the local case.

\begin{figure}[ht!]
\includegraphics[width=1\textwidth]{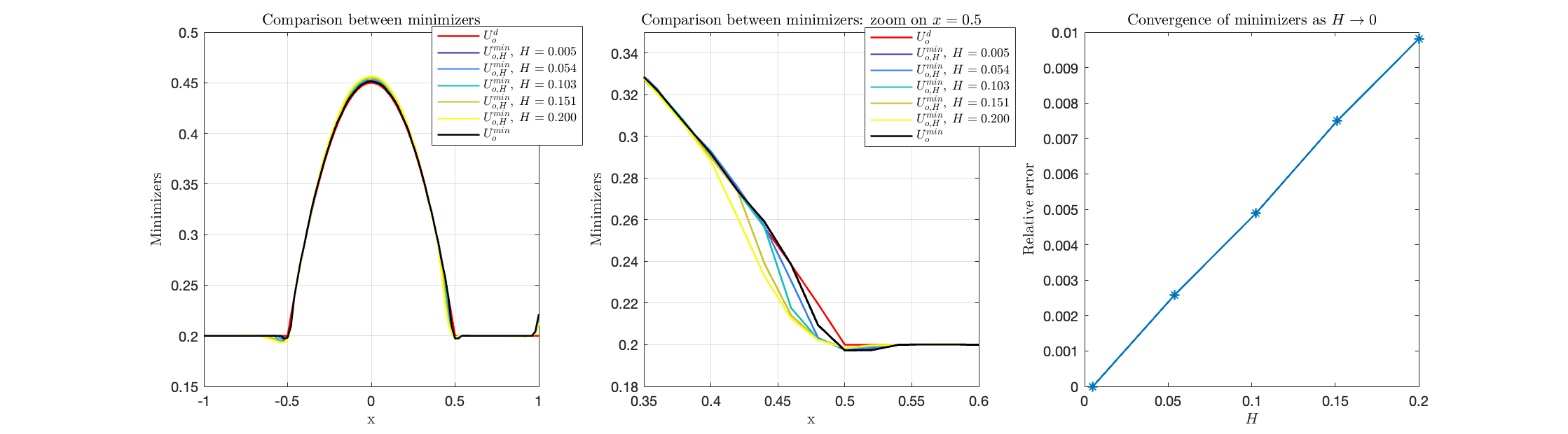}
\caption{ Convergence of minimizers $U^{min}_{o,H}$ to $\mathcal{G}_{\Delta, H}$ to the minimizer $U^{min}_o$ to $\mathcal{G}_\Delta$ as $H \to 0^+$. 
}
\label{fig:discrete_Gamma_conv}
\end{figure}

\subsection{Convergence of minimizers in the limit $\Delta x, H \to 0$}
\label{sec:diagonal convergence}
The previous section is devoted to illustrate numerically the convergence of minimizers to $\Gamma$-convergent functionals in the discrete regime (\Cref{teo:9}). On the other hand, in \Cref{teo:6} we also proved the analogous result at the continuum level. 
Then, the natural question arises whether this limit holds in the simultaneous limit $\Delta x,H \to 0$ (see dashed line in \Cref{scheme:diagonal}). 
We present here a numerical evidence which support the validity of this \emph{double} limit for two possible choices of sequences, $\Delta x= H/2$ and $\Delta x = H^{1.1}$ (upper and lower row respectively in \Cref{fig:diagonal Gamma conv}). 
For each choice $\Delta x, H$ we consider $U^{min}_{o,H}$ the minimizer to $\mathcal{G}_{\Delta, H}$ as in \Cref{sec:discrete gamma conv}. In the left figures we plot $U^{min}_{o,H}$ and $U^d_o$; in the central figures we report the relative error $\norma{U^{min}_{o,H} - U_o^d}_{\L1([-1,1]; \reali)}/ \norma{ U_o^d}_{\L1([-1,1]; \reali)}$; on the right we show $\mathcal{G}_{\Delta, H} (U^{min}_{o,H})$. 
This numerical analysis, which seems to support the convergence claimed above, should be completed by a rigorous proof similar to the one in \cite{zbMATH07852679}.
\begin{figure}[ht!]
\centering
\includegraphics[width=\textwidth]{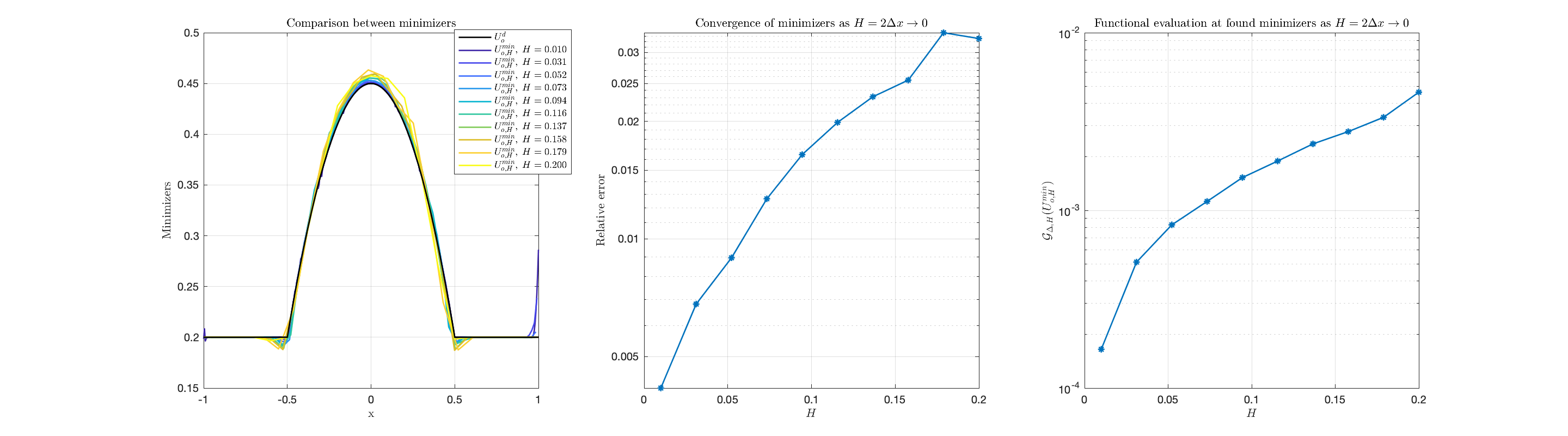}
\includegraphics[width=\textwidth]{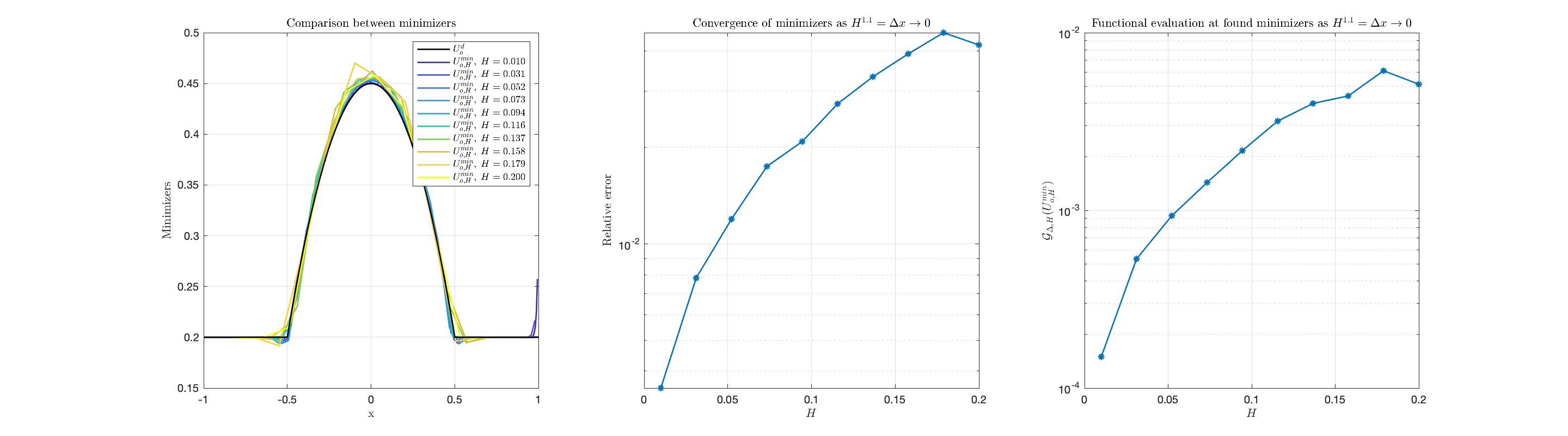}
\caption{Convergence of minimizers to $\mathcal{G}_{\Delta, H}$ to $U^d_o$ in the limit $\Delta x, H \to 0$ for the choices $\Delta x= H/2$ (upper row) and $\Delta x = H^{1.1}$ (lower row), with $H=\texttt{linspace}(0.01,0.1,10)$.
The algorithm \texttt{fmincon} is run with \texttt{'StepTolerance'} $=(\min\{\Delta x\})^3$  and \texttt{'OptimalityTolerance'} = $(\min\{\Delta x\})^2$, where $\min\{\Delta x\}$ denotes the smallest choice of mesh size $\Delta x$, equal to 5e-03 in the first case and 6.31e-03 in the latter.}
\label{fig:diagonal Gamma conv}
\end{figure}
\section{Conclusion}
In the present work we showed that, under suitable hypotheses, control problems dependent on solutions to (local) conservation laws may be dealt as limit of problems dependent on solution to nonlocal conservation laws in the \emph{nonlocal-to-local} limit. 
In particular, by a $\Gamma$-convergence argument,  in \Cref{teo:6} we can characterize the limit up to subsequence of minimizers to nonlocal control problems as minimizers to local counterpart. 
We then show in \Cref{teo:9} the analogue property at the discrete level exploiting the Eulerian-Lagrangian scheme proposed by \cite{abreu_loc,abreu_nonloc}.

The analysis we prove opens up the path to a large variety of related questions and this work represents a first step in the direction of solving \emph{local} control problem as limit of \emph{nonlocal} ones. 
In particular, this strategy can provide a characterization of minimizers to functionals dependent on solutions to local conservation laws as limit of minimizers to nonlocal analogues, bypassing the non-differentiability of the semigroup of solutions to local conservation laws w.r.t. the initial datum and taking advantage of optimality conditions which could be available in the nonlocal framework (see  \cite{zbMATH05908226}, where however the authors impose regularity assumptions on the kernel function $\eta$ which are incompatible to the analysis here presented).

\section*{Funding}
J.~F. is supported by the German Research Foundation (DFG) through SPP 2410 `Hyperbolic Balance Laws in Fluid Mechanics: Complexity, Scales, Randomness' under grant FR 4850/1-1.
C.~N. was supported by the PNRR project \emph{Ricerca DM 118/2023}, CUP~F13C23000360007 and partly supported by the GNAMPA~2025 project
\emph{Modellli di Traffico, di Biologia e di Dinamica dei Gas Basati
  su Sistemi di Equazioni Iperboliche}.

{\small
\bibliographystyle{plain}
  \bibliography{bibtex_arxiv}

\begin{thebibliography}{10}

\bibitem{abreu_nonloc}
E.~Abreu, R.~De~la cruz, J.~C. Juajibioy, and W.~Lambert.
\newblock Lagrangian-eulerian approach for nonlocal conservation laws.
\newblock {\em J. Dyn. Differ. Equations}, 36(2):1435--1481, 2024.

\bibitem{abreu_loc}
Eduardo Abreu and John P{\'e}rez.
\newblock A fast, robust, and simple {Lagrangian}-{Eulerian} solver for balance
  laws and applications.
\newblock {\em Comput. Math. Appl.}, 77(9):2310--2336, 2019.

\bibitem{Ambrosio-Fusco-Pallara}
Luigi Ambrosio, Nicola Fusco, and Diego Pallara.
\newblock {\em Functions of bounded variation and free discontinuity problems}.
\newblock Oxford Mathematical Monographs. The Clarendon Press, Oxford
  University Press, New York, 2000.

\bibitem{amorim2015}
Paulo Amorim, Rinaldo~M. Colombo, and Andreia Teixeira.
\newblock On the numerical integration of scalar nonlocal conservation laws.
\newblock {\em ESAIM Math. Model. Numer. Anal.}, 49(1):19--37, 2015.

\bibitem{bayen2021boundary}
Alexandre Bayen, Jean-Michel Coron, Nicola De~Nitti, Alexander Keimer, and
  Lukas Pflug.
\newblock Boundary controllability and asymptotic stabilization of a nonlocal
  traffic flow model.
\newblock {\em Vietnam J. Math.}, 49(3):957--985, 2021.

\bibitem{blandin2016}
Sebastien Blandin and Paola Goatin.
\newblock Well-posedness of a conservation law with non-local flux arising in
  traffic flow modeling.
\newblock {\em Numer. Math.}, 132(2):217--241, 2016.

\bibitem{zbMATH01302241}
Fran{\c{c}}ois Bouchut and Fran{\c{c}}ois James.
\newblock Differentiability with respect to initial data for a scalar
  conservation law.
\newblock In {\em Hyperbolic problems: Theory, numerics, applications.
  Proceedings of the 7th international conference, Z\"urich, Switzerland,
  February 1998. Vol. I}, pages 113--118. Basel: Birkh{\"a}user, 1999.

\bibitem{Bressan}
Alberto Bressan.
\newblock {\em Hyperbolic systems of conservation laws}, volume~20 of {\em
  Oxford Lecture Series in Mathematics and its Applications}.
\newblock Oxford University Press, Oxford, 2000.

\bibitem{zbMATH00889831}
Alberto Bressan and Andrea Marson.
\newblock A maximum principle for optimally controlled systems of conservation
  laws.
\newblock {\em Rend. Semin. Mat. Univ. Padova}, 94:79--94, 1995.

\bibitem{zbMATH00807784}
Alberto Bressan and Andrea Marson.
\newblock A variational calculus for discontinuous solutions of systems of
  conservation laws.
\newblock {\em Commun. Partial Differ. Equations}, 20(9-10):1491--1552, 1995.

\bibitem{zbMATH05194890}
Alberto Bressan and Wen Shen.
\newblock Optimality conditions for solutions to hyperbolic balance laws.
\newblock In {\em Control methods in PDE-dynamical systems. AMS-IMS-SIAM joint
  summer research conference, Snowbird, UT, USA, July 3--7, 2005.}, pages
  129--152. Providence, RI: American Mathematical Society, 2007.

\bibitem{zbMATH07213667}
Alberto Bressan and Wen Shen.
\newblock On traffic flow with nonlocal flux: a relaxation representation.
\newblock {\em Arch. Ration. Mech. Anal.}, 237(3):1213--1236, 2020.

\bibitem{bressan2021}
Alberto Bressan and Wen Shen.
\newblock Entropy admissibility of the limit solution for a nonlocal model of
  traffic flow.
\newblock {\em Commun. Math. Sci.}, 19(5):1447--1450, 2021.

\bibitem{chen2017global}
Wenbin Chen, Chang Liu, and Zhiqiang Wang.
\newblock Global feedback stabilization for a class of nonlocal transport
  equations: The continuous and discrete case.
\newblock {\em SIAM Journal on Control and Optimization}, 55(2):760--784, 2017.

\bibitem{chiarello2018}
Felisia~Angela Chiarello and Paola Goatin.
\newblock Global entropy weak solutions for general non-local traffic flow
  models with anisotropic kernel.
\newblock {\em ESAIM Math. Model. Numer. Anal.}, 52(1):163--180, 2018.

\bibitem{zbMATH08005762}
Giuseppe~Maria Coclite, Maria Colombo, Gianluca Crippa, Nicola De~Nitti,
  Alexander Keimer, Elio Marconi, Lukas Pflug, and Laura~V. Spinolo.
\newblock Oleinik-type estimates for nonlocal conservation laws and
  applications to the nonlocal-to-local limit.
\newblock {\em J. Hyperbolic Differ. Equ.}, 21(3):681--705, 2024.

\bibitem{nonloc_loc_exp}
Giuseppe~Maria Coclite, Jean-Michel Coron, Nicola De~Nitti, Alexander Keimer,
  and Lukas Pflug.
\newblock A general result on the approximation of local conservation laws by
  nonlocal conservation laws: the singular limit problem for exponential
  kernels.
\newblock {\em Ann. Inst. H. Poincar\'e{} C Anal. Non Lin\'eaire},
  40(5):1205--1223, 2023.

\bibitem{colombo2021role}
Maria Colombo, Gianluca Crippa, Marie Graff, and Laura~V. Spinolo.
\newblock On the role of numerical viscosity in the study of the local limit of
  nonlocal conservation laws.
\newblock {\em ESAIM Math. Model. Numer. Anal.}, 55(6):2705--2723, 2021.

\bibitem{nonloc_loc_infty}
Maria Colombo, Gianluca Crippa, Elio Marconi, and Laura~V. Spinolo.
\newblock Local limit of nonlocal traffic models: convergence results and total
  variation blow-up.
\newblock {\em Ann. Inst. Henri Poincar{\'e}, Anal. Non Lin{\'e}aire},
  38(5):1653--1666, 2021.

\bibitem{Nonloc-to-loc-convex}
Maria Colombo, Gianluca Crippa, Elio Marconi, and Laura~V. Spinolo.
\newblock Nonlocal traffic models with general kernels: singular limit, entropy
  admissibility, and convergence rate.
\newblock {\em Arch. Ration. Mech. Anal.}, 247(2):32, 2023.
\newblock Id/No 18.

\bibitem{colombo2023overview}
Maria Colombo, Gianluca Crippa, Elio Marconi, and Laura~V Spinolo.
\newblock An overview on the local limit of non-local conservation laws, and a
  new proof of a compactness estimate.
\newblock {\em Journ{\'e}es {\'e}quations aux d{\'e}riv{\'e}es partielles},
  pages 1--14, 2023.

\bibitem{zbMATH05908226}
Rinaldo~M. Colombo, Michael Herty, and Magali Mercier.
\newblock Control of the continuity equation with a non local flow.
\newblock {\em ESAIM, Control Optim. Calc. Var.}, 17(2):353--379, 2011.

\bibitem{zbMATH05986509}
Jean-Michel Coron and Zhiqiang Wang.
\newblock Controllability for a scalar conservation law with nonlocal velocity.
\newblock {\em J. Differ. Equations}, 252(1):181--201, 2012.

\bibitem{coron2013}
Jean-Michel Coron and Zhiqiang Wang.
\newblock Output feedback stabilization for a scalar conservation law with a
  nonlocal velocity.
\newblock {\em SIAM J. Math. Anal.}, 45(5):2646--2665, 2013.

\bibitem{dafermos}
Constantine~M. Dafermos.
\newblock {\em Hyperbolic conservation laws in continuum physics}, volume 325
  of {\em Grundlehren der mathematischen Wissenschaften}.
\newblock Springer-Verlag, Berlin, fourth edition, 2016.

\bibitem{zbMATH00735875}
Gianni Dal~Maso.
\newblock {\em An introduction to {{\(\Gamma\)}}-convergence}, volume~8 of {\em
  Prog. Nonlinear Differ. Equ. Appl.}
\newblock Basel: Birkh{\"a}user, 1993.

\bibitem{friedrich2022singular}
Jan Friedrich, Simone G\"ottlich, Alexander Keimer, and Lukas Pflug.
\newblock Conservation laws with nonlocal velocity: the singular limit problem.
\newblock {\em SIAM J. Appl. Math.}, 84(2):497--522, 2024.

\bibitem{friedrich2023numerical}
Jan Friedrich, Sanjibanee Sudha, and Samala Rathan.
\newblock Numerical schemes for a class of nonlocal conservation laws: a
  general approach.
\newblock {\em Netw. Heterog. Media}, 18(3):1335--1354, 2023.

\bibitem{zbMATH06371678}
Michael Gr{\"o}schel, Alexander Keimer, G{\"u}nter Leugering, and Zhiqiang
  Wang.
\newblock Regularity theory and adjoint-based optimality conditions for a
  nonlinear transport equation with nonlocal velocity.
\newblock {\em SIAM J. Control Optim.}, 52(4):2141--2163, 2014.

\bibitem{zbMATH07852679}
Kuang Huang and Qiang Du.
\newblock Asymptotic compatibility of a class of numerical schemes for a
  nonlocal traffic flow model.
\newblock {\em SIAM J. Numer. Anal.}, 62(3):1119--1144, 2024.

\bibitem{Keimer-Pflug}
Alexander Keimer and Lukas Pflug.
\newblock Existence, uniqueness and regularity results on nonlocal balance
  laws.
\newblock {\em J. Differ. Equations}, 263(7):4023--4069, 2017.

\bibitem{keimer2019approximation}
Alexander Keimer and Lukas Pflug.
\newblock On approximation of local conservation laws by nonlocal conservation
  laws.
\newblock {\em J. Math. Anal. Appl.}, 475(2):1927--1955, 2019.

\bibitem{keimer2023nonlocal}
Alexander Keimer and Lukas Pflug.
\newblock Nonlocal balance laws--an overview over recent results.
\newblock {\em Handbook of Numerical Analysis}, 24:183--216, 2023.

\bibitem{keimer2023singulardisc}
Alexander Keimer and Lukas Pflug.
\newblock On the singular limit problem for a discontinuous nonlocal
  conservation law.
\newblock {\em Nonlinear Anal.}, 237:Paper No. 113381, 21, 2023.

\bibitem{KEIMER2025129307}
Alexander Keimer and Lukas Pflug.
\newblock On the singular limit problem for nonlocal conservation laws: A
  general approximation result for kernels with fixed support.
\newblock {\em Journal of Mathematical Analysis and Applications},
  547(2):129307, 2025.

\bibitem{MR267257}
S.~N. Kru\v{z}kov.
\newblock First order quasilinear equations with several independent variables.
\newblock {\em Mat. Sb. (N.S.)}, 81(123):228--255, 1970.

\bibitem{zbMATH01848475}
Stefan Ulbrich.
\newblock A sensitivity and adjoint calculus for discontinuous solutions of
  hyperbolic conservation laws with source terms.
\newblock {\em SIAM J. Control Optim.}, 41(3):740--797, 2002.

\end{thebibliography}
}

\end{document}